\title[Central limit theorem on a lattice]{Application of semi-invariants to proof of the central limit theorem on a lattice}
\date{}
\keywords{Ising model; renormalization group; Gibbs measure; thermodynamic limit; weak dependence; semi-invariant;  cumulant; multivariate normal distribution; central limit theorem}
\subjclass{82B28 Renormalization group methods; 82B44 Disordered systems (random Ising models, random Schr\"odinger operators, etc.)}
\author{Farida Kachapova}
\address{School of Computer and Mathematical Sciences\\
Auckland University of Technology\\
Auckland, New Zealand}
\email{farida.kachapova@aut.ac.nz}
\author{Ilias Kachapov}
\address{Examination Academic Services\\ University of Auckland\\ Auckland, New Zealand}
\email{bill.kachapov@auckland.ac.nz}
\thanks{ }
\newtheorem{theorem}{Theorem}[section]
\newtheorem{lemma}{Lemma}[section]
\newtheorem{example}{Example}[section]
\newtheorem{corollary}{Corollary}[section]
\theoremstyle{definition}
\newtheorem{definition}{Definition}[section]
\begin{document}

\begin{abstract}
Statistical mechanics describes interaction between particles of a physical system. Particle properties of the system can be modelled with a random field on a lattice and studied at different distance scales using renormalization group transformation. Here we consider a thermodynamic limit of Ising model with weak interaction and we use semi-invariants to prove that a random field transformed by renormalization group converges in distribution to an independent field with Gaussian distribution as the distance scale infinitely increases; it is a generalization of the central limit theorem to the Ising model.
\end{abstract}



\maketitle

\section{Introduction}
 
The central limit theorem plays an important part in probability theory and has applications in various fields. The classical form of the theorem considers a sequence of independent random variables and their normalized sums. Here we consider a sequence of random fields of weakly dependent random variables on a multi-dimensional integer lattice. We are interested in the limiting distribution of normalized sums of these variables, similar to the sums in the classical central limit theorem. Such problems about weakly dependent variables arise in the research of renormalization group in statistical mechanics.

The concept of renormalization group as a scale transformation was introduced and studied in works of Kadanoff \cite{Kad66}, \cite{Kad11}, Wilson and Kogut 
\cite{WK}, Sinai \cite{Si76},  Yin \cite{Yin}, and others. 

Originally renormalization  group  was defined in terms of Hamiltonian. \\Kashapov \cite{K80} derived for a high-temperature region a rigorous formalization of renormalization group in terms of the Hamiltonian of a Gibbs field. Later research on renormalization group was based on limit theorems of probability theory (see \cite{D68}, \cite{Si76} and \cite{C}). Sinai \cite{Si76} studied auto-model distributions, which are the distributions invariant under the renormalization group transformations, and he showed that Gaussian distribution is one of them. Malyshev \cite{MM91}, \cite{Ma80} developed technique of asymptotic estimation of semi-invariants for cluster expansions; this technique can also be applied to study properties of renormalization-group transformations.

In this paper we study the limits of distributions under the renormalization group transformations in Ising model (which is a mathematical model of a physical system with many particles). We show that under some conditions the limiting distribution in a high-temperature region is an independent Gaussian distribution. We modify and apply the techniques of Malyshev \cite{MM91}, \cite{Ma80} to estimate semi-invariants of a random field and we use these estimations to prove a generalization of the central limit theorem to the Ising model.

In Section 2 we introduce some concepts from probability theory and statistical physics and briefly prove some relevant lemmas. In Section 3 we state the main result of this paper: the central limit theorem for Ising model, with a brief discussion of its meaning. 

The rest of the paper develops techniques for proving the main theorem. In particular, in Section 4 we prove an inequality about the number of links in a set with a symmetric binary relation and apply it to estimate semi-invariants of a random field in Ising model with Gibbs measure. In subsection 5.1 we prove a series of lemmas, which lead to the direct proof of the main theorem in subsections 5.2 and 5.3. In particular, we find an expression for the limiting variances in Theorem \ref{theorem:variance} and show equality to 0 of all other limiting semi-invariants of the random field transformed by renormalization group. We complete the proof of the main theorem by applying Carleman's theorem to the limiting distribution.

\section{Main concepts}
\subsection{Semi-invariants}
Denote $E(X)$ the expectation of a random variable $X$. 
Semi-invariant is a generalization of the concepts of expectation and covariance. The following is a slight modification of the definition in \cite{MM91}, pg. 27-33.

\begin{definition}
Suppose $X_1,X_2,\ldots,X_m$
are random variables on the same probability space and $M=\lbrace 1,2,\ldots,m\rbrace $ is the set of their  indices.    
For any $S\subseteq M$, we denote $X_S=\prod_{i\in S} X_i $. We assume that the expectation of every such product is finite.

A \textit{semi-invariant} (or \textit{cumulant}) of random variables $X_1,X_2,\ldots,X_m $ is 
\begin{equation*} 
\langle X_1, X_2,\ldots,X_m \rangle=\sum_\alpha (-1)^{k-1}
 (k-1)!E(X_{S_1})\ldots E(X_{S_k}),
\label{Def} 
\end{equation*} 
where the sum is taken over all partitions  
$\alpha=\lbrace S_1,\ldots,S_k\rbrace $ 
of the set $M$. By a partition we mean a set of disjoint, non-empty subsets of $M$ such that their union equals $M$.

\textit{Notation}. If $I=(i_1,\ldots,i_m)$ is a sequence or a set of indices, we denote $\langle X^{,}_I\rangle=\langle X_{i_1},\ldots,X_{i_m}\rangle$.
\end{definition}

Semi-invariants characterize  the distribution and dependence of random variables.

\begin{example}
Suppose $X, X_1, X_2$ and $X_3$ are random variables. Denote $\mu$ the expectation of $X$ and $\sigma$ the standard deviation  of $X$. Then the following hold.

1) $\langle X\rangle=\mu$.
\medskip

2) $ \langle X_1,X_2\rangle =\langle X_1 X_2\rangle - \langle X_1\rangle \langle X_2\rangle=cov(X_1,X_2)$, the covariance of $X_1$ and $X_2$.
  
3) $  \langle X,X\rangle = \sigma^2 $, the variance of $X$. 
\begin{multline*}
4)\; \langle X_1,X_2,X_3 \rangle =\langle X_1 X_2 X_3 \rangle - \langle X_1\rangle   \langle X_2 X_3 \rangle - \langle X_2 \rangle  \langle X_1 X_3 \rangle - \langle X_3 \rangle  \langle X_1 X_2 \rangle 
\\
+2\langle X_1 \rangle \langle X_2 \rangle \langle X_3 \rangle.
\end{multline*}

5) $\langle X,X,X\rangle/\sigma^3$ equals the skewness of $X$.
\medskip

6) $\langle X,X,X,X\rangle /\sigma^4 $ equals the kurtosis of $X$.  
\end{example}

\begin{lemma}
1. A semi-invariant is a symmetrical and multi-linear functional on random variables.

2. If $ 0<n<m $ and two random vectors $(X_1,\ldots,X_n )$ and 
$( X_{n+1},\ldots,X_m )$ are independent of each other, then $\langle X_1,\ldots,X_n,X_{n+1},\ldots,X_m \rangle =0$. 

3. For set $M=\lbrace 1,2,\ldots,m\rbrace $:
\[E(X_M)=\langle X_M\rangle=\sum_\alpha \langle X^{,}_{S_1} \rangle\ldots\langle X^{,}_{S_k} \rangle,\]
where the sum is taken over all partitions  
$\alpha=\lbrace S_1,\ldots,S_k\rbrace $ 
of the set $M$. 
\label{lemma:sem}
\end{lemma}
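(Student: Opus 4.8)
Symmetry is immediate: a permutation of $X_1,\ldots,X_m$ relabels the index set $M$, which merely permutes the partitions among themselves and permutes the factors $E(X_{S_j})$ within each product; since products are commutative, every summand in the definition is preserved, and hence so is the sum. For multilinearity it suffices, by symmetry, to check linearity in the first argument. In any partition $\alpha=\lbrace S_1,\ldots,S_k\rbrace$ the index $1$ lies in exactly one block, say $S_j$; the factor $E(X_{S_j})$ is linear in $X_1$ because expectation is linear and $X_1$ occurs to the first power in the product $X_{S_j}$, while the remaining factors do not involve $X_1$. Thus each summand is linear in $X_1$, and therefore so is the whole sum.

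\textbf{Part 3.} I would read the defining formula as a Möbius inversion on the lattice of set partitions: the coefficient $(-1)^{k-1}(k-1)!$ attached to a $k$-block partition is precisely the value of the Möbius function of that lattice, so the two displayed relations (cumulants from moments, moments from cumulants) are mutually inverse. To keep the argument self-contained I would instead prove part 3 by induction on $m$: isolating the block $T$ that contains a fixed element (say $m$), every partition of $M$ factors as the choice of such a $T$ together with a partition of $M\setminus T$, which lets me rewrite $\sum_\alpha \prod_S \langle X^{,}_S\rangle$ as $\sum_{m\in T\subseteq M}\langle X^{,}_T\rangle\, E(X_{M\setminus T})$ using the inductive hypothesis on the smaller set $M\setminus T$. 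It then remains to verify that this recursion inverts the definition, which is the combinatorial heart of the matter.

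\textbf{Part 2.} With part 3 available, I would argue by induction on $m$. Write $A=\lbrace 1,\ldots,n\rbrace$ and $B=\lbrace n+1,\ldots,m\rbrace$, and use part 3 to expand $E(X_M)$, $E(X_A)$ and $E(X_B)$ as sums of products of cumulants over partitions. The partitions of $M$ all of whose blocks lie entirely in $A$ or entirely in $B$ correspond bijectively to pairs consisting of a partition of $A$ and a partition of $B$; hence the sum over these ``split'' partitions equals $E(X_A)E(X_B)$. Independence gives $E(X_M)=E(X_A)E(X_B)$, so the sum of $\prod_S\langle X^{,}_S\rangle$ over the remaining ``crossing'' partitions must vanish. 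Any crossing partition with at least two blocks contains a block $S$ meeting both $A$ and $B$ with $|S|<m$, and for that block $\langle X^{,}_S\rangle=0$ by the inductive hypothesis (the subvectors indexed by $S\cap A$ and $S\cap B$ are independent and both index sets are nonempty); hence every such product is $0$ and the crossing sum reduces to the single one-block term $\langle X^{,}_M\rangle$, giving $\langle X^{,}_M\rangle=0$. The base case $m=2$ is the statement that independent $X_1,X_2$ satisfy $\langle X_1,X_2\rangle=0$, which is their covariance.

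The main obstacle I expect is part 3: verifying that the alternating factorial coefficients in the definition really do invert the partition sum for the moments, equivalently identifying them with the partition-lattice Möbius function. Once that inversion is secured, part 1 is routine and part 2 follows cleanly from the split/crossing dichotomy above.
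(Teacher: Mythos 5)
The paper does not actually prove this lemma: part 1 is dismissed as following from the definition, and parts 2 and 3 are delegated to the citation \cite{MM91}. So there is no in-paper argument to compare against; the question is whether your self-contained proof stands on its own. Your part 1 is correct and complete (each partition has exactly one block containing the index $1$, that block's factor $E(X_{S_j})$ is linear in $X_1$, the other factors do not involve $X_1$). Your part 2 is also correct and is the standard argument: the split/crossing dichotomy, multiplicativity of the partition sum over $A$ and $B$, independence giving $E(X_M)=E(X_A)E(X_B)$, and the inductive hypothesis killing every crossing partition with at least two blocks, leaving only $\langle X^{,}_M\rangle$. This is a clean and valid induction.

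The genuine gap is part 3, and you name it yourself. Both routes you describe are the right ones --- identifying $(-1)^{k-1}(k-1)!$ with the M\"obius function $\mu(\sigma,\hat{1})$ of the partition lattice (for $\sigma$ with $k$ blocks), or an induction that isolates the block containing a fixed element --- but you carry out neither: the sentence ``it then remains to verify that this recursion inverts the definition, which is the combinatorial heart of the matter'' concedes that the actual content of part 3 is not established in your write-up. This matters doubly, because your part 2 is conditional on part 3. The gap is closable in a routine way: either cite the classical fact that $\mu(\sigma,\hat{1})=(-1)^{k-1}(k-1)!$ in the partition lattice and that moment and cumulant functionals are multiplicative over blocks, so that M\"obius inversion on each interval yields the moment--cumulant formula (this is Rota/Speed); or complete the induction by substituting the defining formula for $\langle X^{,}_T\rangle$ into your recursion $E(X_M)=\sum_{m\in T\subseteq M}\langle X^{,}_T\rangle E(X_{M\setminus T})$ and checking the coefficient identity $\sum_{j}\binom{k-1}{j-1}(-1)^{j-1}(j-1)!\,(k-j)!\,=\,[k=1]$. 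As written, though, the proposal proves parts 1 and 2 but only outlines part 3, so it is not yet a complete proof of the lemma.
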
 
\begin{proof}
1. Follows from the definition of semi-invariants.

Parts 2 and 3 are proven in \cite{MM91}.
\end{proof}

The following is a well-known lemma about semi-invariants of normal distribution.
\begin{lemma} 
Suppose random variables $Y_1, Y_2,\ldots,Y_m$  have an independent multivariate normal distribution and $M=\lbrace 1,2,\ldots,m\rbrace$ is the set of their indices. 

1. If $k\geqslant 3$ and $i_1,\ldots,i_k\in M$, then $\langle Y_{i_1}, Y_{i_2},\ldots,Y_{i_k}\rangle=0$.
\medskip

2. If $i,j\in M$ and $i\neq j$, then $\langle Y_i, Y_j \rangle=0$. 
\label{lemma:normal1}
\end{lemma}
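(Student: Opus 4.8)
The plan is to treat the two parts via the two structural features of an independent multivariate normal vector $Y=(Y_1,\ldots,Y_m)$: independence across coordinates, handled directly by Lemma \ref{lemma:sem}(2), and the quadratic form of each coordinate's log-moment generating function. Part 2 in fact needs no normality. Since the coordinates are mutually independent and $i\neq j$, the one-element vectors $(Y_i)$ and $(Y_j)$ are independent of each other, so Lemma \ref{lemma:sem}(2), applied to these two singletons, gives $\langle Y_i,Y_j\rangle=0$ at once.

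For Part 1 the cleanest route is through the joint cumulant generating function. I would first record the standard identity connecting the combinatorial definition to analysis: writing $K(t_1,\ldots,t_m)=\log E\bigl(\exp(\sum_{j}t_jY_j)\bigr)$, the semi-invariant is the mixed Taylor coefficient,
\[
\langle Y_{i_1},\ldots,Y_{i_k}\rangle=\frac{\partial^{k}K}{\partial t_{i_1}\cdots\partial t_{i_k}}\Big|_{t=0}.
\]
For an independent normal vector the moment generating function factorises, so $K(t)=\sum_{j}\bigl(\mu_j t_j+\tfrac12\sigma_j^2 t_j^2\bigr)$, a sum of univariate quadratics with no cross terms. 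Consequently every mixed partial derivative of $K$ of total order $k\geq 3$ vanishes at the origin, which is exactly Part 1; the same computation re-proves Part 2, since $K$ contains no monomial $t_it_j$ with $i\neq j$.

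If one prefers to stay entirely within the tools of the excerpt, Part 1 can instead be reduced to a single coordinate: if the indices $i_1,\ldots,i_k$ are not all equal, split the positions into the block carrying one index value and the block carrying the rest; these two subvectors are functions of disjoint, hence independent, coordinates, and Lemma \ref{lemma:sem}(2) makes the semi-invariant vanish. This leaves only $\langle Y_i,\ldots,Y_i\rangle$ with $k$ copies, the $k$-th semi-invariant of a single normal variable, which one can kill by strong induction using the moment-cumulant expansion of Lemma \ref{lemma:sem}(3) together with the known moments $E((Y_i-\mu_i)^{2n})=(2n-1)!!\,\sigma_i^{2n}$ and $E((Y_i-\mu_i)^{2n+1})=0$ (after centering, which is legitimate because semi-invariants of order $\geq 2$ are invariant under shifting an argument by a constant). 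The main obstacle in either route is the same analytic fact: that the purely combinatorial semi-invariant of order $k\geq 3$ of a Gaussian is zero. In the generating-function route this cost is paid up front in justifying the exponential-formula identity above; in the inductive route it is paid in the bookkeeping that only pair-partitions survive once all lower semi-invariants of order $\geq 3$ are known to vanish.
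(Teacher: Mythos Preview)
Your arguments are correct. The paper, however, does not prove this lemma at all: it introduces it as ``a well-known lemma about semi-invariants of normal distribution'' and moves on. So there is no proof in the paper to compare against; you have supplied one where the authors chose to omit it.

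Both of your routes are standard and sound. Part~2 is exactly the one-line application of Lemma~\ref{lemma:sem}(2) you describe. For Part~1, the cumulant-generating-function argument is the cleanest, though as you note it rests on the identity $\langle Y_{i_1},\ldots,Y_{i_k}\rangle=\partial^k K/\partial t_{i_1}\cdots\partial t_{i_k}\big|_{t=0}$, which is not recorded in the paper and would itself need a short justification from the combinatorial definition. Your second route---reducing to the one-variable case via Lemma~\ref{lemma:sem}(2) and then killing $\langle Y_i,\ldots,Y_i\rangle$ by induction through Lemma~\ref{lemma:sem}(3)---stays entirely within the paper's toolkit and is a legitimate self-contained alternative, at the cost of the pair-partition bookkeeping you mention.
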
 

\begin{lemma}
Suppose $Z_1, Z_2,\ldots,Z_m$  are independent random variables and each of them has the standard normal distribution. 

Suppose $\sigma_1>0,\sigma_2>0,\ldots,\sigma_m>0$  and $Y_i=\sigma_iZ_i$ $(i=1,2,\ldots,m).$ 
\smallskip

Then the random variables $Y_1, Y_2,\ldots,Y_m$ satisfy the Carleman's condition:
\begin{equation}
\sum_{n=1}^\infty \left(A_{2n}\right)^{-\frac{1}{2n}} =\infty,
\text{ where }A_k=\sum_{i=1}^m\langle Y_i^k \rangle.
\label{eq:Carleman}
\end{equation}

\label{lemma:normal2} 
\end{lemma}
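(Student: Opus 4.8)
The plan is to compute the moments $A_{2n}$ explicitly from normality, bound them crudely, and then compare the resulting series with a divergent series of the type $\sum 1/\sqrt{n}$. First I would unwind the notation: $\langle Y_i^k\rangle$ is the single-variable semi-invariant of the random variable $Y_i^k$, which, as the Example records ($\langle X\rangle=\mu$), is simply the expectation $E(Y_i^k)$. Since $Y_i=\sigma_i Z_i$ with $Z_i$ standard normal, $Y_i$ has the $N(0,\sigma_i^2)$ distribution, so its odd moments vanish and its even moments are the classical $E(Y_i^{2n})=(2n-1)!!\,\sigma_i^{2n}$, where $(2n-1)!!=1\cdot 3\cdots(2n-1)$. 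Summing over $i$ gives the exact expression $A_{2n}=(2n-1)!!\sum_{i=1}^m\sigma_i^{2n}$.

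Next I would produce an upper bound on $A_{2n}$ that is convenient to take a $2n$-th root of. Writing $\sigma=\max_{1\le i\le m}\sigma_i$, we have $\sum_{i=1}^m\sigma_i^{2n}\le m\,\sigma^{2n}$, while the double factorial is controlled crudely by $(2n-1)!!\le(2n)^n$, since it is a product of $n$ factors each at most $2n$. Hence $A_{2n}\le m\,(2n)^n\,\sigma^{2n}$. Taking the $2n$-th root and using $m^{1/(2n)}\le\sqrt{m}$ for every $n\ge 1$ yields $(A_{2n})^{1/(2n)}\le \sqrt{m}\,\sigma\,\sqrt{2n}$, so that $(A_{2n})^{-1/(2n)}\ge C/\sqrt{n}$ with $C=(\sigma\sqrt{2m})^{-1}>0$.

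Finally, since $\sum_{n=1}^\infty 1/\sqrt{n}$ diverges, comparison gives $\sum_{n=1}^\infty (A_{2n})^{-1/(2n)}=\infty$, which is exactly Carleman's condition~\eqref{eq:Carleman}. I do not expect a genuine obstacle here: the argument is elementary once the moments are identified. The only point requiring a little care is the growth rate of the even moments—one must confirm that $(A_{2n})^{1/(2n)}$ grows no faster than a constant times $\sqrt{n}$, so that its reciprocal is bounded below by the general term of a divergent series. A crude bound on $(2n-1)!!$ already suffices, so there is no need for a sharp Stirling asymptotic; the estimate $(2n-1)!!\sim\sqrt{2}\,(2n/e)^{n}$ would only refine the constant $C$ without changing the $\sqrt{n}$ growth order.
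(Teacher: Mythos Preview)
Your proof is correct and follows essentially the same route as the paper: identify $A_{2n}=(2n-1)!!\sum_i\sigma_i^{2n}$, bound it by $m\,\sigma^{2n}$ times a factor growing like $n^n$, take the $2n$-th root, and compare with $\sum 1/\sqrt{n}$. The only difference is that the paper rewrites $(2n-1)!!=(2n)!/(2^n n!)$ and invokes Stirling's formula to obtain $A_{2n}\le c_1(2\sigma^2/e)^n n^n$, whereas your cruder bound $(2n-1)!!\le(2n)^n$ reaches the same $\sqrt{n}$ growth of $(A_{2n})^{1/(2n)}$ without Stirling; as you note, the sharper constant is irrelevant here.
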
  
\begin{proof}
Clearly, $\langle Z_i^{2n}\rangle=(2n-1)!!$ for $n=1,2,\ldots,m$. 
\medskip

Denote $\sigma=\max \{\sigma_1,\ldots\sigma_m\}$. For $i=1,\ldots,m$:
\[\langle Y_i^{2n}\rangle=\langle \sigma_i^{2n} Z_i^{2n}\rangle=\sigma_i^{2n}\langle  Z_i^{2n}\rangle\leqslant\sigma^{2n}(2n-1)!!=\sigma^{2n}\dfrac{(2n)!}{2^n n!}.\]

By Stirling formula,
\[n!=\sqrt{2\pi n}\left(\dfrac{n}{e} \right)^n\theta_n,\text{ where }1<\theta_n<e.\]
\begin{multline*}
\text{So }A_{2n}=\sum_{i=1}^m\langle Y_i^{2n}\rangle
\leqslant m\sigma^{2n}\dfrac{(2n)!}{2^n n!}
\\
= m\sigma^{2n}\dfrac{\sqrt{2\pi 2n}\left( \dfrac{2n}{e}\right)^{2n}\theta_{2n}}{2^n\sqrt{2\pi n}\left( \dfrac{n}{e}\right)^{n}\theta_{n}}=\sqrt{2}m \cdot\dfrac{\theta_{2n}}{\theta_{n}}\left(\dfrac{2\sigma^2}{e} \right)^n n^n\leqslant c_1\left(\dfrac{2\sigma^2}{e} \right)^n n^n
\end{multline*}
for some positive constant $c_1$ (depending only on $m$), since $\dfrac{\theta_{2n}}{\theta_n}<e$. Next,
\[\left(A_{2n}\right)^{-\frac{1}{2n}}\geqslant \left(c_{1}\right)^{-\frac{1}{2n}}\left(\dfrac{2\sigma^2}{e} \right)^{-\frac{1}{2}} n^{-\frac{1}{2}}\geqslant \dfrac{c_2}{\sqrt{n}}\]
for some constant $c_2>0$.
Therefore 
\[\sum_{n=1}^\infty \left(A_{2n}\right)^{-\frac{1}{2n}}\geqslant \sum_{n=1}^\infty \dfrac{c_2}{\sqrt{n}}= 
\infty.\]
\end{proof}

\begin{lemma}
Suppose $M=\lbrace 1,2,\ldots,m\rbrace$ and random variables $X_1,X_2,\ldots,\\X_m$  satisfy the following conditions:
\begin{list}{•}{•}
\item
for $k\geqslant 3$ and $i_1,\ldots,i_k\in M$, $\langle X_{i_1}, X_{i_2},\ldots,X_{i_k}\rangle=0$;
\medskip
\item
for $i,j\in M$, $i\neq j$,  $\langle X_i, X_j \rangle=0$.
\end{list}
Then $X_1,X_2,\ldots,X_m$ have an independent multivariate normal distribution.  
\label{lemma:Carleman} 
\end{lemma}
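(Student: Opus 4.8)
The plan is to show that $X_1,\ldots,X_m$ have exactly the same moments as a genuinely independent Gaussian family with matching first two moments, and then to invoke determinacy of the moment problem (Carleman) to conclude that the two laws coincide. Note first that the stated hypotheses presuppose that every semi-invariant $\langle X_{i_1},\ldots,X_{i_k}\rangle$ with repetitions allowed exists, so all moments $E(X_i^k)$ are finite. Write $\mu_i=\langle X_i\rangle$ and $\sigma_i^2=\langle X_i,X_i\rangle$. By Lemma~\ref{lemma:sem}.2 any semi-invariant of order $\ge 2$ with a constant argument vanishes, so semi-invariants of order $\ge 2$ are translation-invariant; hence the centred variables $\tilde X_i=X_i-\mu_i$ satisfy $\langle\tilde X_i\rangle=0$ and have the same second and higher joint semi-invariants as the $X_i$. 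Thus the complete list of joint semi-invariants of $\tilde X_1,\ldots,\tilde X_m$ is: all first-order ones $0$, the second-order ones $\sigma_i^2$ on the diagonal and $0$ off it, and all of order $\ge 3$ equal to $0$. (If some $\sigma_i=0$ then $X_i$ is a.s. the constant $\mu_i$, a degenerate normal independent of the rest; I set those coordinates aside and assume each $\sigma_i>0$.)

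Next I introduce the reference family. Let $Z_1,\ldots,Z_m$ be independent standard normal variables and put $Y_i=\sigma_iZ_i$, so that $Y_1,\ldots,Y_m$ are independent normal as in Lemma~\ref{lemma:normal2}. By Lemma~\ref{lemma:normal1}, together with $\langle Y_i\rangle=0$ and $\langle Y_i,Y_i\rangle=\sigma_i^2$, the family $Y$ has precisely the same list of joint semi-invariants as $\tilde X$ at every order. I then convert this into agreement of moments: for any tuple $(i_1,\ldots,i_n)$ of indices from $M$ with repetitions allowed, Lemma~\ref{lemma:sem}.3 (applied to tuples, as permitted by the Notation) expresses the mixed moment $E(\tilde X_{i_1}\cdots\tilde X_{i_n})$ as one fixed universal polynomial, the sum over partitions, in the joint semi-invariants of the sub-tuples. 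Since $\tilde X$ and $Y$ share all semi-invariants, this polynomial takes equal values for both, so $E(\tilde X_{i_1}\cdots\tilde X_{i_n})=E(Y_{i_1}\cdots Y_{i_n})$ for every such tuple; that is, $\tilde X$ and $Y$ have identical moments of all orders.

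Finally I would invoke moment determinacy. By Lemma~\ref{lemma:normal2} the Gaussian family $Y$ satisfies Carleman's condition, so its joint law is uniquely determined by its moments; since $\tilde X$ has the same moments, $\tilde X$ and $Y$ have the same distribution, and undoing the centring gives that $X_i=\tilde X_i+\mu_i$ are independent with $X_i\sim N(\mu_i,\sigma_i^2)$. The step I expect to be the main obstacle is precisely this appeal to Carleman's theorem, because the classical statement concerns a single distribution on $\mathbb{R}$ while here the moment problem is multivariate. I would resolve it in one of two ways. Either I cite a multidimensional moment-determinacy theorem whose hypothesis is exactly the summed Carleman condition $\sum_n A_{2n}^{-1/(2n)}=\infty$ of Lemma~\ref{lemma:normal2} (observing that $\sum_i E(Y_i^{2n})\ge E(Y_j^{2n})$ makes each marginal Carleman, which is what such theorems require); or, more self-containedly, I use the Cram\'er--Wold device: for arbitrary reals $c_1,\ldots,c_m$, multilinearity of semi-invariants gives that $\sum_i c_i\tilde X_i$ has first semi-invariant $0$, second semi-invariant $\sum_i c_i^2\sigma_i^2$, and all higher semi-invariants $0$, so its moments equal those of a one-dimensional $N\!\bigl(0,\sum_i c_i^2\sigma_i^2\bigr)$, which is moment-determinate by the univariate Carleman theorem; identifying the law of every linear combination with a normal law then forces $(\tilde X_1,\ldots,\tilde X_m)$ to be multivariate normal with covariance matrix $\mathrm{diag}(\sigma_1^2,\ldots,\sigma_m^2)$, i.e. independent Gaussian.
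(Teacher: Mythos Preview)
Your proof is correct and follows essentially the same route as the paper's: center the variables, match all joint semi-invariants with an independent Gaussian reference $Y_i=\sigma_i Z_i$, convert to equality of all mixed moments via the partition formula (Lemma~\ref{lemma:sem}.3), and then invoke Carleman's condition (Lemma~\ref{lemma:normal2}) for moment determinacy. The paper simply appeals to ``Carleman's theorem'' at the last step without discussing the multivariate issue you flag, so your explicit acknowledgment of it---and your Cram\'er--Wold alternative reducing to the univariate case---is if anything more careful than the original.
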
 
\begin{proof}
Denote $\mu_i=\langle X_i\rangle$, $\sigma_i^2=\langle X_i,X_i\rangle$ and $V_i=X_i-\mu_i$ $(i=1,2,\ldots,m)$. Then $\langle V_i\rangle=0$ for each $i=1,2,\ldots,m$. Other corresponding semi-invariants are the same for the random vectors $(X_1,X_2,\ldots,X_m)$ and $(V_1,V_2,\ldots,V_m)$.

Consider independent random variables $Z_1, Z_2,\ldots,Z_m$, where each $Z_i$ has the standard normal distribution, and denote $Y_i=\sigma_iZ_i$. By Lemma \ref{lemma:normal1}, corresponding semi-invariants are the same for the random vectors $(V_1,V_2,\ldots,V_m)$ and $(Y_1,Y_2,\ldots,Y_m)$. Semi-invariants uniquely determine moments. So  corresponding moments are also the same for the random vectors $(V_1,V_2,\ldots,V_m)$ and $(Y_1,Y_2,\ldots,Y_m)$. By Lemma \ref{lemma:normal2}, these moments satisfy the Carleman's condition and by Carleman's theorem, $(V_1,V_2,\ldots,V_m)$ and $(Y_1,Y_2,\ldots,Y_m)$ have the same probability distribution. 

Therefore, the random variables $V_1,V_2,\ldots,V_m$  have an independent multivariate normal distribution. Since each $X_i=V_i+\mu_i$, the random variables $X_1,X_2,\ldots,X_m$ also have an independent multivariate normal distribution.
\end{proof}

\subsection{Ising model}
\label{subsection:Ising model}

For the rest of the paper we fix a natural number $\nu\geqslant 1$ and consider a $\nu$-dimensional integer lattice:
\[\mathbb{Z}^\nu= \{(t_1,\ldots,t_\nu)\mid t_i\in \mathbb{Z}, i=1,\ldots,\nu\}\]
with the distance between any two points given by:
\[\rho(s,t)=\sum_{i=1}^{\nu} |s_i-t_i|.\]

$R=\left\lbrace \{s,t\}\mid s,t\in \mathbb{Z}^\nu \;\&\; \rho(s,t)=1\right\rbrace$. $R$ is the set of all pairs of neighbouring nodes in the lattice $\mathbb{Z}^\nu$.

$\Omega=\left\lbrace 
\omega\mid\omega:\mathbb{Z}^\nu\rightarrow \{-1,1\}\right\rbrace.$

We associate with each $t\in\mathbb{Z}^\nu$ a function $Q_t:\Omega\rightarrow \{-1,1\}$ such that for each $\omega\in\Omega$:
\[Q_t(\omega)=\omega(t).\]

The following is the definition of an Ising model with no interaction with external fields and with a constant strength of interaction along the lattice; it is a particular case of the definition in \cite{MM91}.   

\begin{definition}
We fix a real number $\lambda$ and a natural number $N\geqslant 1$.

An \textit{Ising model} with parameters $\lambda$ and $N$ is a triple of objects $(\Lambda_N,\Omega_N,U_N)$, which are defined as follows.

1. $\Lambda_N=\left\lbrace t\in \mathbb{Z}^\nu\mid \text{ for each }i=1,2,\ldots,\nu,|t_i|\leqslant N
\right\rbrace.$
Thus, $\Lambda_N$ is a cube in $\mathbb{Z}^\nu$.
\smallskip

2. $\Omega_N=\left\lbrace 
\delta\mid\delta:\Lambda_N\rightarrow \{-1,1\}\right\rbrace.$ Elements of $\Omega_N$ are called \textit{configurations} or \textit{states}.

3. Denote $R_N=\left\lbrace \{r,s\}\in R\mid r,s\in \Lambda_N \right\rbrace$. $R_N$ is the set of all pairs of neighbouring nodes in the cube $\Lambda_N$.

Function $U_N :\Omega\rightarrow \mathbb{R}$ is defined by the following:
\[ U_N (\omega)=-\lambda\sum_{\{r,s\}\in R_N}\omega(r)\omega(s).\]

This completes the definition of Ising model.
\label{def:Ising model}
\end{definition}

The Ising model describes a physical system with many particles represented by nodes of the cube $\Lambda_N$ in the integer lattice; $\Omega_N$ is the set of all states of the system, function $U_N$ characterizes the interaction energy of the system and $|\lambda|$ is proportional to the inverse temperature of the system. The parameter $\lambda$ also characterizes the strength of interaction between particles, and we assume that only neighbouring particles interact. 

The Ising model with $\lambda =0$ describes a physical system with no interaction between its elements, e.g. ideal gas. The Ising model with $\lambda>0$ describes the ferromagnetic system and the Ising model with $\lambda<0$ describes the anti-ferromagnetic system.

\subsection{Gibbs Measure}

The definition of Gibbs measure can be found in \cite{D68}. Since it is important for our paper, we also provide the definition.
\begin{definition}
For the Ising model we define the \textit{associated probability space}\\ $(\Omega, \Sigma_N, P_{\lambda,N})$ as follows.

1. The sample space is the set $\Omega$ as defined before.

2. The sigma-algebra $\Sigma_N$ of events consists of all finite unions of the sets:
\[A_\delta=\{\omega\in\Omega\mid(\forall t\in\Lambda_N)(\omega(t)=\delta(t))\},\delta\in\Omega_N.\]

3. $\overline{U}_N :\Omega_N\rightarrow \mathbb{R}$ is defined by the following:
\[\overline{U}_N (\delta)=U_N (\omega)\text{ for } \omega\in A_\delta.\]
The definition is valid because $U_N (\omega_1)=U_N (\omega_2)$ for any $\omega_1,\omega_2\in A_\delta$.

4. The probability of event $A_\delta$  is defined by:
\begin{equation}
P_{\lambda,N}(A_\delta) =\dfrac{1}{\Xi} e^{-\overline{U}_N (\delta)}, \text{ where }\Xi=\sum_{\delta'\in\Omega_N} e^{-\overline{U}_N(\delta')}. 
\label{prob1}
\end{equation}

This generates the probability measure $P_{\lambda,N}$ on all events in $\Sigma_N$, which is called 
\textit{Gibbs measure} on the cube $\Lambda_N$. 
The formula \eqref{prob1} ensures that $P_{\lambda,N}(\Omega)=1$, since $\Omega=\cup_{\delta\in\Omega_N}A_\delta$. This completes the definition of the associated probability space.
\end{definition}

$\overline{U}_N(\delta)$ characterizes the energy of configuration $\delta$ of the cube $\Lambda_N$. 
We denote $\langle \cdot,\ldots,\cdot\rangle_{\lambda,N}$ the semi-invariants with respect to the Gibbs measure $P_{\lambda,N}$.
\medskip

Clearly, $\{Q_t\mid t \in \Lambda_N\}$ is a random field on the associated probability space.

\begin{lemma} Suppose $ \lambda=0 $. Then the following hold.
 
1. $P_{0,N}\left(Q_{t_1}=a_1,\ldots,Q_{t_m}=a_m\right)=
2^{-m}$
for any $a_1,\ldots,a_m\in\{1,-1\} $ and distinct points 
$t_1,\ldots, t_m\in\Lambda_N $.
\medskip

2. $\{Q_t\mid t \in \Lambda_N\}$ is an independent random field with respect to $P_{0,N}$. 
\medskip

3. The distribution of the random field $\{Q_t\mid t \in \Lambda_N\}$  with respect to the Gibbs measure $P_{0,N}$ does not depend on $N$.
\medskip

4. For any $t\in \mathbb{Z}^\nu$, $\langle Q_t\rangle_{0}=0$.
\medskip

5. For any $m\geqslant 1$ and distinct $  t_1,\ldots, t_m \in \mathbb{Z}^\nu$, $\langle Q_{t_1}\cdot\ldots\cdot Q_{t_m}\rangle_{0}=0$.
\medskip

6. If $m$ is odd and $  t_1,\ldots, t_m \in \mathbb{Z}^\nu$, then $\langle Q_{t_1}\cdot\ldots\cdot Q_{t_m}\rangle_{0}=0$.
\medskip

Due to part 3, semi-invariants of variables $Q_t (t\in\Lambda_N)$ with respect to measure $P_{0,N}$ do not depend on $N$ and we denote them with $\langle \cdot,\ldots,\cdot\rangle_{0}$.
\label{lemma:zero_lambda}
\end{lemma}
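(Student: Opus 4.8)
The plan is to exploit the fact that setting $\lambda=0$ collapses the interaction energy. By Definition \ref{def:Ising model}, $U_N(\omega)=-\lambda\sum_{\{r,s\}\in R_N}\omega(r)\omega(s)=0$ for every $\omega\in\Omega$, hence $\overline{U}_N(\delta)=0$ for every configuration $\delta\in\Omega_N$. Formula \eqref{prob1} then gives $\Xi=\sum_{\delta'\in\Omega_N}e^{0}=|\Omega_N|=2^{|\Lambda_N|}$ and $P_{0,N}(A_\delta)=2^{-|\Lambda_N|}$ for each $\delta$; that is, the Gibbs measure at $\lambda=0$ is the uniform measure on configurations. To prove part 1, I would write the event $\{Q_{t_1}=a_1,\ldots,Q_{t_m}=a_m\}$ as the disjoint union of all $A_\delta$ with $\delta(t_i)=a_i$ for $i=1,\ldots,m$. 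Since $t_1,\ldots,t_m$ are distinct, exactly $m$ coordinates of $\delta$ are prescribed and the remaining $|\Lambda_N|-m$ are free, so there are $2^{|\Lambda_N|-m}$ such configurations and the probability equals $2^{|\Lambda_N|-m}\cdot 2^{-|\Lambda_N|}=2^{-m}$.

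Parts 2, 3 and 4 are then immediate. Taking $m=1$ in part 1 gives $P_{0,N}(Q_t=a)=\tfrac12$ for each node and each $a\in\{-1,1\}$, so the joint law factorises, $P_{0,N}(Q_{t_1}=a_1,\ldots,Q_{t_m}=a_m)=2^{-m}=\prod_{i=1}^m P_{0,N}(Q_{t_i}=a_i)$, which is exactly the independence asserted in part 2. Because the right-hand side of part 1 does not mention $N$, the finite-dimensional distributions of the field coincide for all $N$ containing the relevant nodes, which is part 3 and which justifies the notation $\langle\cdot,\ldots,\cdot\rangle_0$. For part 4 I would compute directly $\langle Q_t\rangle_0=E(Q_t)=1\cdot\tfrac12+(-1)\cdot\tfrac12=0$, again using part 1 with $m=1$.

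For part 5 I would use independence: recall that the semi-invariant of a single random variable equals its expectation, so $\langle Q_{t_1}\cdot\ldots\cdot Q_{t_m}\rangle_0=E(Q_{t_1}\cdots Q_{t_m})$; by part 2 the variables $Q_{t_1},\ldots,Q_{t_m}$ at distinct nodes are independent, hence $E(Q_{t_1}\cdots Q_{t_m})=\prod_{i=1}^m E(Q_{t_i})=0$ by part 4. For part 6 the nodes need not be distinct, so independence does not apply directly; here I would instead use the spin-flip symmetry of the uniform measure. The map $\omega\mapsto-\omega$ is a bijection of $\Omega$ that preserves $P_{0,N}$ (each configuration still has probability $2^{-|\Lambda_N|}$) and sends each $Q_t$ to $-Q_t$, so the product is sent to $(-1)^m Q_{t_1}\cdots Q_{t_m}$. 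Invariance of the measure yields $E(Q_{t_1}\cdots Q_{t_m})=(-1)^m E(Q_{t_1}\cdots Q_{t_m})$, which forces the expectation to vanish when $m$ is odd. I do not expect a genuine obstacle here: the only point requiring slight care is part 6, where repeated nodes break independence and one must either invoke this symmetry or reduce the product to distinct nodes via $Q_t^2=1$, noting that an odd total multiplicity leaves an odd — hence nonempty — set of surviving nodes, to which part 5 then applies.
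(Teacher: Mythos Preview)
Your argument is correct and, for parts 1--5, essentially identical to the paper's: both compute $\overline{U}_N\equiv 0$, obtain the uniform measure on configurations, count the $2^{|\Lambda_N|-m}$ cylinder atoms to get part~1, and deduce parts 2--5 exactly as you do.

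The only genuine difference is in part~6. The paper's route is the algebraic reduction you mention at the end: since $Q_t^2=1$, pairs of repeated indices cancel, and because $m$ is odd the surviving product over distinct nodes is nonempty, so part~5 applies. Your primary argument instead uses the spin-flip symmetry $\omega\mapsto-\omega$, which preserves the uniform measure and forces $E(\prod Q_{t_i})=(-1)^mE(\prod Q_{t_i})$. Both are valid; the symmetry argument is slightly slicker and does not rely on the special relation $Q_t^2=1$ (so it would extend to any symmetric single-site distribution), while the paper's reduction is more hands-on and makes explicit that odd products literally collapse to nonempty products over distinct sites, which is occasionally useful elsewhere in the paper.
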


\begin{proof}
Suppose $\lambda=0 $. Then for any $\delta\in \Omega_N$, $\overline{U}_N(\delta)=0$. There are $|\Omega_N|=2^{|\Lambda_N|}$ configurations in $\Omega_N$, where $|\Lambda_N|=(2N+1)^\nu$ is the number of points in the cube $\Lambda_N$.

So for any $\delta\in \Omega_N$: 
\[P_{0,N}(A_\delta)=\dfrac{1}{|\Omega_N|}=\dfrac{1}{2^{|\Lambda_N|}}.\]

For any $t\in\Lambda_N$:
\[P_{0,N}(Q_t=1)=
P_{0,N}\left(\bigcup_{\{\delta:\delta(t)=1\}}A_\delta\right)
=2^{|\Lambda_N|-1}\cdot
\dfrac{1}{2^{|\Lambda_N|}}=\dfrac{1}{2}.\]

Thus, for any $t\in\Lambda_N$:
\begin{equation} 
 P_{0,N}(Q_t=1)
=P_{0,N}(Q_t=-1)=\dfrac{1}{2}.
\label{Eq3}
\end{equation} 

Consider $a_1,\ldots,a_m\in\{1,-1\} $ and distinct points 
$t_1,\ldots, t_m\in\Lambda_N $.
\begin{multline*}
P_{0,N}\left(Q_{t_1}=a_1,\ldots,Q_{t_m}=a_m\right)=
\\
P_{0,N}\left(\bigcup\left\lbrace A_\delta\mid\delta(t_1)=a_1,\ldots,
\delta(t_m)=a_m\right\rbrace \right)
\\
=2^{|\Lambda_N|-m}\cdot
\dfrac{1}{2^{|\Lambda_N|}}=2^{-m}=\prod_{i=1}^m P_{0,N}(Q_{t_i}=a_i)
\end{multline*} 
by \eqref{Eq3}. This proves parts 1 and 2 of the lemma, and part 3 follows from part 1.

4. It follows from \eqref{Eq3}.

5. By part 2, $Q_{t_1},\ldots, Q_{t_m}$ are independent, so by part 4,
\[\langle Q_{t_1}\cdot\ldots\cdot Q_{t_m}\rangle_0= \langle Q_{t_1}\rangle_0\cdot\ldots\cdot \langle Q_{t_m}\rangle_0=0.\]

6. If $t_1=t_2$, then $Q_{t_1}\cdot Q_{t_2}=1$  and $Q_{t_1}\cdot Q_{t_2}\cdot Q_{t_3}\cdot\ldots\cdot Q_{t_m}=Q_{t_3}\cdot\ldots\cdot Q_{t_m}$. 

Therefore we can remove pairs of variables with equal indices from the product:
\[Q_{t_1}\cdot Q_{t_2}\cdot\ldots\cdot Q_{t_m}=Q_{t_{i_1}}\cdot  \ldots\cdot Q_{t_{i_k}},\] 
where the remaining $t_{i_1},\ldots,t_{i_k}$ are distinct; the resulting product contains at least one multipler, since $m$ is odd. So $\langle Q_{t_1}\cdot Q_{t_2}\cdot\ldots\cdot Q_{t_m}\rangle_0=\langle Q_{t_{i_1}}\cdot  \ldots\cdot Q_{t_{i_k}}\rangle_0=0$ by part 5.
\end{proof}  

\begin{lemma} 
Suppose $f:\mathbb{Z}^\nu\rightarrow\mathbb{Z}^\nu$ is a bijection; $t_1,t_2,\ldots,t_m\in\mathbb{Z}^\nu$ and $s_i=f(t_i)$ for $i=1,2,\ldots,m$. Then the random vectors $(Q_{t_1},Q_{t_2},\ldots,Q_{t_m})$ and $(Q_{s_1},Q_{s_2},\ldots,Q_{s_m})$ have the same distribution with respect to measure $P_{0,N}$.
\label{lemma:translation_1}
\end{lemma}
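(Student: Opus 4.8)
The plan is to exploit the fact that, under the measure $P_{0,N}$, the field $\{Q_t\}$ is i.i.d.\ and uniform on $\{-1,1\}$, so its finite-dimensional distributions depend only on which of the indices $t_1,\ldots,t_m$ coincide; a bijection preserves exactly this combinatorial pattern. Concretely, it suffices to show that for every tuple $(a_1,\ldots,a_m)\in\{-1,1\}^m$ the two probabilities
\[
P_{0,N}\left(Q_{t_1}=a_1,\ldots,Q_{t_m}=a_m\right)\quad\text{and}\quad P_{0,N}\left(Q_{s_1}=a_1,\ldots,Q_{s_m}=a_m\right)
\]
are equal. By Lemma \ref{lemma:zero_lambda}, part 3, these finite-dimensional distributions do not depend on $N$, so I may enlarge $N$ if necessary to assume that all the points $t_1,\ldots,t_m$ and $s_1,\ldots,s_m$ lie in $\Lambda_N$; this simultaneously disposes of the fact that $f$ need not map $\Lambda_N$ into itself.

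The key structural observation is that, because $f$ is a bijection, for all $i,j$ we have $t_i=t_j$ if and only if $s_i=s_j$. Thus the two index sequences share the same pattern of coincidences. I would then split into cases according to the fixed tuple $(a_1,\ldots,a_m)$. If there exist indices $i,j$ with $t_i=t_j$ but $a_i\neq a_j$, then, since $Q_{t_i}$ and $Q_{t_j}$ are literally the same function on $\Omega$ when $t_i=t_j$, the left-hand event is empty and its probability is $0$; by the coincidence-preserving property the same $i,j$ witness $s_i=s_j$ with $a_i\neq a_j$, so the right-hand event is empty as well, and both probabilities equal $0$.

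In the remaining case the tuple $(a_i)$ is consistent with the coincidences, and each event reduces to a condition on the distinct indices alone. Letting $k$ be the number of distinct points among $t_1,\ldots,t_m$ (equivalently, among $s_1,\ldots,s_m$, by the previous step), Lemma \ref{lemma:zero_lambda}, part 1, gives that both probabilities equal $2^{-k}$. Hence the two probabilities agree in every case, and so the random vectors $(Q_{t_1},\ldots,Q_{t_m})$ and $(Q_{s_1},\ldots,Q_{s_m})$ have the same distribution with respect to $P_{0,N}$.

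I expect the only genuine subtlety to be the bookkeeping around repeated indices together with the fact that $f$ need not preserve the cube $\Lambda_N$. Both are minor: the former is handled by the remark that coincident indices produce identical random variables and that bijectivity transports coincidences exactly, and the latter by the $N$-independence of the finite-dimensional distributions in Lemma \ref{lemma:zero_lambda}, part 3. No estimation or analytic input is needed; the argument is purely combinatorial once the independent uniform structure at $\lambda=0$ has been invoked.
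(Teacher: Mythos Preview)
Your proof is correct and follows essentially the same route as the paper's: both compare the finite-dimensional probabilities for an arbitrary tuple $(a_1,\ldots,a_m)$, split into the inconsistent case (some $t_i=t_j$ with $a_i\neq a_j$, giving probability $0$ on both sides) and the consistent case (reduce to the $k$ distinct indices and apply Lemma~\ref{lemma:zero_lambda}.1 to get $2^{-k}$ on both sides), using throughout that a bijection preserves the coincidence pattern. Your explicit invocation of Lemma~\ref{lemma:zero_lambda}.3 to enlarge $N$ so that all $t_i$ and $s_i$ lie in $\Lambda_N$ is a small clarification the paper leaves implicit.
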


\begin{proof}
Consider $a_1,a_2,\ldots,a_m\in \{-1,1\}$. If there is a pair $t_i=t_j$ with $a_i\neq a_j$, then $s_i=s_j$ and 
\[P_{0,N}\left(Q_{t_1}=a_1,\ldots,Q_{t_m}=a_m\right)
=0=P_{0,N}\left(Q_{s_1}=a_1,\ldots,Q_{s_m}=a_m\right).\]

Otherwise let us write the points $t_1,\ldots,t_m$ without repetitions: $t_{i_1},\ldots,t_{i_k}$. Then the points $s_{i_1},\ldots,s_{i_k}$ are also distinct. So by Lemma \ref{lemma:zero_lambda}.1) we have: 
\begin{multline*}
P_{0,N}\left(Q_{t_1}=a_1,\ldots,Q_{t_m}=a_m\right)
=P_{0,N}\left(Q_{t_{i_1}}=a_{i_1},\ldots,Q_{t_{i_k}}=a_{i_k}\right)=2^{-k}
\\
=P_{0,N}\left(Q_{s_{i_1}}=a_{i_1},\ldots,Q_{s_{i_k}}=a_{i_k}\right)
=P_{0,N}\left(Q_{s_1}=a_1,\ldots,Q_{s_m}=a_m\right).
\end{multline*} 
\end{proof}
 
\subsection{Thermodynamic limit}

An Ising model $(\Lambda_N,\Omega_N,U_N)$ has two parameters $\lambda$ and $N$ and this model generates the associated probability space $(\Omega,\Sigma_N,P_{\lambda,N})$. Let us see what happens when $N\rightarrow\infty$. Clearly, the finite cube $\Lambda_N$ transforms into the lattice $\mathbb{Z}^\nu$ and $\Omega_N$ transforms into $\Omega$. 

For any finite subset $T$ of $\mathbb{Z}^\nu$ we denote \[Q_T=\prod_{t\in T} Q_t.\]

\begin{theorem}
There exists a positive constant $C_\nu$ (depending only on $\nu$) such that if $|\lambda|<C_\nu$, then the following hold.
\[1. \text{ For any }t_1,t_2,\ldots,t_m\in\mathbb{Z}^\nu\text{ the limit }\lim_{N\rightarrow\infty}\langle Q_{t_1},Q_{t_2},\ldots,Q_{t_m}
\rangle_{\lambda,N} \text{ exists}.\]
\[2.\text{ For any finite subset }T\text{ of }\mathbb{Z}^\nu\text{ the limit }f(Q_T)=\lim_{N\rightarrow\infty}\langle Q_T\rangle_{\lambda,N}\text{ exists.}\]
\label{theorem:therm_limit}
\end{theorem}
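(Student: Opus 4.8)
The plan is to represent the Gibbs measure $P_{\lambda,N}$ as a tilt of the independent measure $P_{0,N}$ and to expand each semi-invariant as a power series in $\lambda$ whose coefficients are free-field ($\lambda=0$) semi-invariants. Writing $\xi_{\{r,s\}}=Q_rQ_s$ and $H_N=\sum_{\{r,s\}\in R_N}\xi_{\{r,s\}}$, formula \eqref{prob1} gives $e^{-\overline{U}_N}=e^{\lambda H_N}$, so that $E_{\lambda,N}[F]=E_{0,N}\!\left[F\,e^{\lambda H_N}\right]/E_{0,N}\!\left[e^{\lambda H_N}\right]$ for every bounded $\Sigma_N$-measurable $F$. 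Differentiating the generating function $\Psi(u;\lambda)=\log E_{0,N}\!\left[\exp\!\left(\sum_i u_iQ_{t_i}+\lambda H_N\right)\right]$ in the variables $u_i$ at $u=0$ recovers $\langle Q_{t_1},\ldots,Q_{t_m}\rangle_{\lambda,N}$, and a Taylor expansion in $\lambda$ (legitimate because the system is finite and $\Psi$ is analytic) yields
\[\langle Q_{t_1},\ldots,Q_{t_m}\rangle_{\lambda,N}=\sum_{n=0}^{\infty}\frac{\lambda^{n}}{n!}\,\langle Q_{t_1},\ldots,Q_{t_m},\underbrace{H_N,\ldots,H_N}_{n}\rangle_{0,N}.\]
Expanding $H_N$ into bonds by the multilinearity of Lemma \ref{lemma:sem}.1 turns this into
\[\langle Q_{t_1},\ldots,Q_{t_m}\rangle_{\lambda,N}=\sum_{n=0}^{\infty}\frac{\lambda^{n}}{n!}\sum_{b_1,\ldots,b_n\in R_N}\langle Q_{t_1},\ldots,Q_{t_m},\xi_{b_1},\ldots,\xi_{b_n}\rangle_{0}.\]
Taking $m=1$ with the single variable $Q_T$ handles part 2, since $\langle Q_T\rangle_{\lambda,N}=E_{\lambda,N}[Q_T]$ is the semi-invariant of one random variable.

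Next I would exploit the structure of the free-field coefficients. Each term $\langle Q_{t_1},\ldots,Q_{t_m},\xi_{b_1},\ldots,\xi_{b_n}\rangle_{0}$ is an $N$-independent number by Lemma \ref{lemma:zero_lambda}.3 and is translation-covariant by Lemma \ref{lemma:translation_1}. The decisive point is Lemma \ref{lemma:sem}.2 combined with the independence of the $Q_t$ across distinct sites under $P_{0,N}$ (Lemma \ref{lemma:zero_lambda}.2): the coefficient vanishes whenever the sites carrying $Q_{t_1},\ldots,Q_{t_m},\xi_{b_1},\ldots,\xi_{b_n}$ split into two groups occupying disjoint, non-adjacent site-sets. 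Hence only those bond families survive for which the points of $T$ and the bonds $b_1,\ldots,b_n$ form a single connected subgraph of the lattice. Such a cluster with $n$ bonds stays within a bounded distance (of order $n$) of $T$, so for fixed $n$ only bonds in a finite neighbourhood of $T$ contribute; once $N$ is large enough that $R_N$ contains that neighbourhood, the inner sum at order $n$ becomes independent of $N$. Thus every coefficient of $\lambda^{n}$ stabilizes to a definite value as $N\to\infty$.

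The remaining and genuinely hard step is to pass the limit $N\to\infty$ through the $\lambda$-series, and this is where the estimates announced for Section 4 are needed. I would establish two bounds that hold uniformly in $N$: a combinatorial inequality showing that the number of connected bond families of size $n$ anchored at $T$ grows at most like $c_\nu^{\,n}$, with $c_\nu$ depending only on the lattice dimension $\nu$; and an estimate $\bigl|\langle Q_{t_1},\ldots,\xi_{b_1},\ldots,\xi_{b_n}\rangle_{0}\bigr|\leqslant K\,n!\,L^{n}$ on the free semi-invariants of these bounded variables. Multiplying by $|\lambda|^{n}/n!$ and summing, the $n$-th term is dominated by $K\,(c_\nu L\,|\lambda|)^{n}$, a geometric series convergent uniformly in $N$ as soon as $|\lambda|<C_\nu:=(c_\nu L)^{-1}$. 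Uniform absolute convergence together with the termwise stabilization established above permits interchanging the limit and the sum, giving the existence of $\lim_{N\to\infty}\langle Q_{t_1},\ldots,Q_{t_m}\rangle_{\lambda,N}$ and of $\lim_{N\to\infty}\langle Q_T\rangle_{\lambda,N}$. The main obstacle is precisely the simultaneous control of the cluster count and the cumulant magnitudes so that their product remains summable; securing the symmetric-relation ``number of links'' inequality that drives this geometric estimate is the task carried out in Section 4.
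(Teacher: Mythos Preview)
Your overall strategy matches the paper's: expand via the tilted-measure identity into the series $\sum_n \frac{\lambda^n}{n!}\langle Q_{t_1},\ldots,Q_{t_m},H_N,\ldots,H_N\rangle_0$, use multilinearity and independence under $P_{0,N}$ to restrict to connected bond configurations, observe that each fixed-$n$ coefficient stabilises once $\Lambda_N$ contains an $n$-neighbourhood of $T$, and pass to the limit by a uniform geometric majorant. This is exactly Lemmas \ref{lemma:series_N} and \ref{lemma:semi_limit}; the paper then gets part 2 from part 1 via Lemma \ref{lemma:sem}.3 rather than by your direct $m=1$ expansion, but either route works.

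The gap is in the specific pair of estimates you propose: as written they do not combine to a summable majorant. You count connected bond \emph{families} of size $n$ by $c_\nu^{\,n}$, but your cumulant bound $|\langle\ldots\rangle_0|\leqslant K\,n!\,L^n$ is for a fixed ordered tuple $(b_1,\ldots,b_n)$. Rewriting $\frac{1}{n!}\sum_{b_1,\ldots,b_n}$ as a sum over families $\gamma$ gives weight $1/\gamma!$ (with $\gamma!=\prod_i n_i!$), not $1/n!$. Inserting your bound yields $\sum_{\gamma}\frac{K\,n!\,L^n}{\gamma!}$, and since $\gamma!=1$ whenever the bonds are distinct, the $n$-th term is at least of order $K\,n!\,(c_\nu L|\lambda|)^n$, which diverges for every $\lambda\neq 0$. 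The paper's crucial refinement (Lemma \ref{lemma:estimates}, resting on the Estimation Theorem \ref{theorem:estimation}) is that the free cumulant obeys
\[
|\langle Q^,_b,\Phi^\backprime_\gamma\rangle_0|\leqslant (C_3)^{m+n}\,m!\,\gamma!,
\]
i.e.\ the bound scales with $\gamma!$, not $n!$. This $\gamma!$ cancels the $1/\gamma!$ weight exactly, leaving $\sum_{\gamma\text{ conn.}}(C_3)^{m+n}m!\,|\lambda|^n\leqslant (C_3)^m m!\,(C_2C_3|\lambda|)^n$ by the family count of Lemma \ref{lemma:Eulirean}, and hence a genuine geometric majorant with $C_\nu=(2C_2C_3)^{-1}$. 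Replacing $n!$ by $\gamma!$ is not cosmetic: securing the $\gamma!$ dependence is precisely the content of the symmetric-relation inequality in Section 4, so your sketch has identified the right obstacle but understated what the cumulant estimate must deliver.
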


Proof will be given in subsection \ref{subsection_proof1}.

\begin{definition}
Assume $|\lambda|<C_\nu$. The \textit{limiting probability space} $(\Omega,\Sigma,P_\lambda)$ is defined as follows.

1. As before, $\Omega=\left\lbrace 
\omega\mid\omega:\mathbb{Z}^\nu\rightarrow \{-1,1\}\right\rbrace.$ 

2. The sigma-algebra $\Sigma$ of events consists of all countable unions of the sets: 
\[M_{T,A}=\{
\omega\in\Omega\mid\omega(t_1)=a_1,\ldots,\omega(t_m)=a_m\}\]
for all pairs of sets $T=\{t_1,t_2,\ldots,t_m\}\subset\mathbb{Z}^\nu$ and $A=\{a_1,a_2,\ldots,a_m\}\subseteq \{-1,1\}$, $m\geqslant 0$. It is sufficient to take only $A\subseteq \{-1,1\}$ because for other $A\subset\mathbb{R}$, $M_{T,A}=\varnothing$.

3. For any pair of $T=\{t_1,t_2,\ldots,t_m\}\subset\mathbb{Z}^\nu$ and $A=\{a_1,a_2,\ldots,a_m\}\subseteq \{-1,1\}$ we define the probability of $M_{T,A}$ by the following:
\begin{equation}
P_\lambda(M_{T,A})=\frac{(-1)^k}{2^m}\sum_{T'\subseteq T}f(Q_{T'})\prod_{\{i:t_i\in T\setminus T'\}}a_i,
\label{eq:limit_prob}
\end{equation}
where $f$ is defined in Theorem \ref{theorem:therm_limit}.2) and $k$ is the number of $a_i\in A$ that equal -1.

The formula \eqref{eq:limit_prob} is given in \cite{MM91}; it generates the probability measure  $P_\lambda$ on all events in $\Sigma$, which is called the \textit{limiting Gibbs measure}.

This completes the definition of the limiting probability space.
\label{def:probability_space}
\end{definition}

\begin{definition}
The \textit{thermodynamic} or \textit{macroscopic limit} of Ising model with parameter $\lambda$ is the lattice $\mathbb{Z}^\nu$ together with the limiting probability space as defined in Definition \ref{def:probability_space}.
\end{definition}

Clearly, $\{Q_t\mid t\in\mathbb{Z}^\nu\}$ is a random field on the limiting probability space. We denote $\langle \cdot,\ldots,\cdot\rangle_{\lambda}$ the semi-invariants with respect to the limiting Gibbs measure $P_\lambda$.

\begin{theorem}
Suppose $|\lambda|<C_\nu$, where  $C_\nu$ is the constant from Theorem \ref{theorem:therm_limit}. Then the following hold.

1. For any finite subset $T$ of $\mathbb{Z}^\nu$:
\[\lim_{N\rightarrow\infty}\langle Q_T\rangle_{\lambda,N}=\langle Q_T\rangle_{\lambda}.\]

2. For any $t_1,t_2,\ldots,t_m\in\mathbb{Z}^\nu$:
\[\lim_{N\rightarrow\infty}\langle Q_{t_1},Q_{t_2},\ldots,Q_{t_m}
\rangle_{\lambda,N}=\langle Q_{t_1},Q_{t_2},\ldots,Q_{t_m}
\rangle_{\lambda}.\]
\label{theorem:limits}
\end{theorem}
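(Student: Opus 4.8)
The plan is to establish part 1 by a direct computation from the explicit formula \eqref{eq:limit_prob} that defines $P_\lambda$, and then to obtain part 2 as a consequence of part 1 together with the observation that a semi-invariant is a fixed polynomial in the moments. Throughout I write $\langle Q_S\rangle_{\lambda,N}$ and $\langle Q_S\rangle_\lambda$ for the expectations of the single product $Q_S$ under $P_{\lambda,N}$ and $P_\lambda$ respectively, since the semi-invariant of one random variable is its expectation.

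For part 1, fix $T=\{t_1,\ldots,t_m\}$ with $m=|T|$ distinct nodes. The events $M_{T,A}$ for $A=(a_1,\ldots,a_m)\in\{-1,1\}^m$ partition $\Omega$, and $Q_T\equiv\prod_{i=1}^m a_i$ on $M_{T,A}$, so
\[\langle Q_T\rangle_\lambda=\sum_{A\in\{-1,1\}^m}\Bigl(\prod_{i=1}^m a_i\Bigr)P_\lambda(M_{T,A}).\]
I would substitute \eqref{eq:limit_prob}, use $(-1)^k=\prod_{i=1}^m a_i$ so that the factor $\bigl(\prod_i a_i\bigr)^2=1$ disappears, and interchange the finite sums to obtain
\[\langle Q_T\rangle_\lambda=\frac{1}{2^m}\sum_{T'\subseteq T}f(Q_{T'})\sum_{A\in\{-1,1\}^m}\ \prod_{\{i:\,t_i\in T\setminus T'\}}a_i.\]
The inner sum factorises over the coordinates $a_i$; each coordinate with $t_i\in T\setminus T'$ contributes $\sum_{a_i\in\{-1,1\}}a_i=0$, so the whole inner sum vanishes unless $T'=T$, in which case it equals $2^m$. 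Only the term $T'=T$ survives, giving $\langle Q_T\rangle_\lambda=f(Q_T)=\lim_{N\to\infty}\langle Q_T\rangle_{\lambda,N}$ by Theorem \ref{theorem:therm_limit}. The one thing to be careful about is the orthogonality relation $\sum_{a\in\{-1,1\}}a=0$, which is exactly what makes the sum collapse.

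For part 2, the definition of semi-invariant expresses $\langle Q_{t_1},\ldots,Q_{t_m}\rangle$ as one and the same polynomial $\sum_\alpha(-1)^{k-1}(k-1)!\,\langle Q_{S_1}\rangle\cdots\langle Q_{S_k}\rangle$ in the moments $\langle Q_S\rangle$, $S\subseteq\{1,\ldots,m\}$, under either measure; since the sum over partitions is finite and independent of $N$, it suffices to prove $\langle Q_S\rangle_{\lambda,N}\to\langle Q_S\rangle_\lambda$ for each $S$ and then pass to the limit term by term. The obstacle is that the $t_i$ need not be distinct, so $Q_S$ is not directly a product over a set and part 1 does not apply verbatim. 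I would remove this by the pointwise identity $Q_t^2\equiv1$: collapsing repeated indices gives $Q_S=Q_{T_S}$, where $T_S$ is the set of nodes among $\{t_i:i\in S\}$ occurring an odd number of times, the same reduction used in the proof of Lemma \ref{lemma:zero_lambda}. Since this identity holds on $\Omega$, it holds under every measure, so $\langle Q_S\rangle_{\lambda,N}=\langle Q_{T_S}\rangle_{\lambda,N}\to f(Q_{T_S})$ by Theorem \ref{theorem:therm_limit}, while $\langle Q_S\rangle_\lambda=\langle Q_{T_S}\rangle_\lambda=f(Q_{T_S})$ by part 1. Substituting these convergent moments into the polynomial then yields $\lim_{N\to\infty}\langle Q_{t_1},\ldots,Q_{t_m}\rangle_{\lambda,N}=\langle Q_{t_1},\ldots,Q_{t_m}\rangle_\lambda$, which completes the proof.
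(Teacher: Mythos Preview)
Your proposal is correct and follows essentially the same route as the paper: the paper's own proof is only a sketch (``uses \eqref{eq:limit_prob} \ldots\ a probability measure and its semi-invariants can be defined through each other'' for part~1, and ``follows from part~1 by the definition of semi-invariant'' for part~2), and you have supplied precisely those details, including the orthogonality collapse in part~1 and the reduction of repeated indices via $Q_t^2\equiv1$ in part~2.
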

\begin{proof}

1. The proof uses \eqref{eq:limit_prob} and is similar to the corresponding proof in \cite{MM91}, pg. 2 (a probability measure and its semi-invariants can be defined through each other). 

2. This follows from part 1 by the definition of semi-invariant.
\end{proof}

\subsection{Renormalization group}
  The following concept was introduced by Kadanoff \cite{Kad66}.
\begin{definition}
Fix a natural number $k > 1$ and a real number $ \alpha\geqslant\nu $.

1. Define a mapping $G_k: \mathbb{Z}^\nu\to \mathbb{Z}^\nu$ 
  as follows:
\begin{equation} \nonumber
 G_k(t_1,t_2,\ldots,t_\nu)=\left( \left[ \dfrac{t_1}{k}\right],
\left[\dfrac{t_2}{k}\right],\ldots,\left[\dfrac{t_\nu}{k}\right]\right),   
\end{equation}
where $[x]$ denotes the integer part of a real number $x$.
  For any point $ \tau \in \mathbb{Z}^\nu $ there are $ k^\nu $
  points that are mapped into $ \tau $   by $ G_k $.

2. A \emph{renormalization group} (with parameters $ k $ 
 and $\alpha$) is a transformation that assigns to each random field  $\{X_t\mid t \in \mathbb{Z}^\nu\}$ another random field  $\{Y^{(k)}_\tau\mid \tau\in \mathbb{Z}^\nu\}$ given by: 
\begin{equation}	
\label{RG}
Y^{(k)}_\tau=k^{-\frac{\alpha}{2}} \sum_{t \in G_k^{-1}(\tau)} X_t.
\end{equation}  
\end{definition}

The renormalization group is a scaling transformation. It allows to study the physical system at different distance scales, such as atomic and molecular levels. Details of its physical interpretation can be found in \cite{Kad11}.

We are interested in the distribution of the result $ Y^{(k)}_\tau$ of the renormalization group transformation of the field $\{Q_t\mid t\in\mathbb{Z}^\nu\}$.

\section{The central limit theorem for Ising model}

First we introduce some notations. We use letters $b,c,d,\ldots$ for finite ordered sequences (or in short, sequences). For a sequence $b=(T_1,\ldots,T_n)$ we denote $|b|=n$.

\begin{definition}
1. A \textit{family} (of elements of a set $ \mathfrak{A}$) is a set of pairs \\$\alpha=\{(T_1,n_1), \ldots,(T_m,n_m)\}$, where $T_1,\ldots,T_m$ are distinct elements of $ \mathfrak{A}$ and $n_i\geqslant 1$ for each $i=1,\ldots,m$. 

2. The number $n_i$ is called the \textit{multiplicity} of element $T_i$ in the family $\alpha$.

3. We denote the length of the family $\alpha$ as 
$|\alpha|=n_1+n_2+\ldots+n_m$ and \[\alpha!=n_1!\cdot n_2!\cdot\ldots\cdot n_m! \]

4. When $T_1, \ldots,T_m$ are sets we denote:
\[\tilde{\alpha}=\bigcup_{i=1}^mT_i.\]
\end{definition} 

We use letters $\alpha,\beta,\gamma,\ldots$ for families.
The same elements $T_1,\ldots,T_n\in\mathfrak{A}$ can be represented as a sequence or a family.

\begin{definition}
1. Any sequence $b=(T_1,\ldots,T_n)$ \textit{reduces} to a family:
\[\{(\overline{T}_1,n_1), \ldots,(\overline{T}_q,n_q)\},\]
where $\overline{T}_1,\ldots,\overline{T}_q$ are the elements 
$T_1,\ldots,T_n$ written without repetitions, and each $n_i$ is the number of times that $\overline{T}_i$ is repeated in $\alpha$; $n_1+\ldots+n_q=n$.

For each family $\alpha$ of length $n$ there are $\dfrac{n!}{\alpha !}$ sequences that reduce to $\alpha$.
\medskip

2. For any family $\alpha=\{(B_1,n_1), \ldots,(B_q,n_q)\}$ we can define an \textit{associated sequence}:
\[\widehat{\alpha}=\left( \underbrace{B_1,\ldots,B_1}_{n_1\texttt{ times}} ,\ldots, \underbrace{B_q,\ldots,B_q}_{n_q\texttt{ times}}\right).\]
Then $\widehat{\alpha}$ reduces to $\alpha$.
\end{definition}

The set $R$ of all pairs of neighbouring nodes in $\mathbb{Z}^\nu$ was defined in subsection 2.2. Denote $R^*=\{\gamma\mid\gamma$ is a family of elements of $R\}$.

\begin{definition}
Consider a sequence $b=(t_1,t_2\ldots,t_m)$ of points in $\mathbb{Z}^\nu$ and a
family $\gamma\in R^*$.

1. Consider also the associated sequence $\widehat{\gamma}=(A_1,A_2,\ldots,A_n)$.

The \textit{associated graph} of $b$ and $\gamma$ is defined as follows:

its edges are $A_1,A_2,\ldots,A_n$;

the set of its vertices is $\{t_1,t_2\ldots,t_m\}\cup\bigcup_{i=1}^n A_i$.

2. We say that the family $\gamma$ \textit{connects} the sequence $b$ if the associated graph of $b$ and $\gamma$ is connected.
\end{definition}

\begin{definition}
For any $A=\{r, s\}\in R$ we denote $\Phi_A= Q_r Q_s$; the function $\Phi_{A}$ represents the interaction between the neighbours $r$ and $s$. 

The set $\{\Phi_{A}\mid A\in R\}$ is called the \textit{potential}.  
\end{definition}

\textit{Notation}. Suppose $b=(t_1,t_2\ldots,t_m)$ is a sequence of points in $\mathbb{Z}^\nu$, $\gamma\in R^*$ and the associated sequence 
$\widehat{\gamma}=(A_1,A_2, \ldots,A_n).$ Denote
\[\langle Q^,_b,\Phi^\backprime_\gamma\rangle_0=
\langle Q_{t_1},Q_{t_2},\ldots,Q_{t_m},
\Phi_{A_1},\Phi_{A_2},\ldots, \Phi_{A_n}\rangle_0.\]

Denote $\bar{0}$ the origin in the lattice $\mathbb{Z}^\nu$: $\bar{0}=(0,0,\ldots,0)$.

\begin{theorem} (Main Theorem)
Consider the thermodynamic limit of the \\Ising model with parameter $\lambda$. 

Suppose a renormalization group with parameters $k$ and  $\alpha$  transforms the random field $\{Q_t\mid t\in\mathbb{Z}^\nu\}$ into a random field $\lbrace   Y^{(k)}_\tau \mid\tau\in \mathbb{Z}^\nu \rbrace$. There exists a positive constant $C$ such that for any $|\lambda|< C$ the following hold.
\medskip

1. Suppose $\alpha>\nu$. Then the field $Y^{(k)}_\tau \rightarrow 0$ in mean square as $ k\rightarrow \infty$.  
\medskip

2. Suppose $\alpha=\nu $. Then as $ k\rightarrow \infty$, the field $\lbrace Y^{(k)}_\tau \mid\tau\in \mathbb{Z}^\nu \rbrace$ converges in distribution to an independent field with Gaussian distribution (i.e. any finite subset of the field has a multivariate normal distribution). Each of the variables of the limiting field has 0 expectation and the positive variance given by:
\[V=1+\sum_{n=1}^{\infty}
\lambda^nV_n,\text{ where each }V_n=\sum_{\substack{\gamma\in R^*,
\\
|\gamma|=n,
\\
\gamma\text{ connects }(\bar{0})
}} 
\sum_{\substack{t\in \tilde{\gamma},
\\
\gamma\text{ connects }(\bar{0},t),
\\
t\neq \bar{0}
}}
\dfrac{1}{\gamma!}
\langle Q_{\bar{0}},Q_t,\Phi^\backprime_\gamma\rangle_0.\]
\label{theorem:main}
\end{theorem}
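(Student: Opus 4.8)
The plan is to reduce part 2 to a statement about semi-invariants and then apply Lemma \ref{lemma:Carleman}. The engine of the argument is a cluster-expansion formula, to be proved in the later sections, that writes each limiting semi-invariant of the original field as a sum over connecting families,
\[\langle Q_{t_1},\ldots,Q_{t_m}\rangle_\lambda=\sum_{\substack{\gamma\in R^*\\\gamma\text{ connects }b}}\frac{\lambda^{|\gamma|}}{\gamma!}\,\langle Q^,_b,\Phi^\backprime_\gamma\rangle_0,\qquad b=(t_1,\ldots,t_m).\]
Together with the combinatorial bound of Section 4 on the number of families of a given length connecting a sequence, this makes every series below absolutely convergent for $|\lambda|$ below some constant $C$ and legitimizes all rearrangements. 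Throughout I use that the limiting measure is invariant under lattice translations and under $\omega\mapsto-\omega$; the latter forces $\langle Q_t\rangle_\lambda=0$, and the former makes $\langle Q_s,Q_t\rangle_\lambda$ depend only on $u=t-s$.

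First I would compute the second moment, which governs both parts. By multilinearity and translation invariance,
\[\langle Y^{(k)}_\tau,Y^{(k)}_\tau\rangle_\lambda=k^{-\alpha}\!\!\sum_{s,t\in G_k^{-1}(\tau)}\!\!\langle Q_s,Q_t\rangle_\lambda=k^{\nu-\alpha}\sum_{u}\,\prod_{i=1}^{\nu}\Bigl(1-\tfrac{|u_i|}{k}\Bigr)^{\!+}\langle Q_{\bar 0},Q_u\rangle_\lambda,\]
because each displacement $u$ is realised by $\prod_i(k-|u_i|)^+$ ordered pairs in a cube of side $k$. The expansion and the Section 4 bounds give $\sum_u|\langle Q_{\bar 0},Q_u\rangle_\lambda|<\infty$, so dominated convergence sends the inner sum to $\sum_u\langle Q_{\bar 0},Q_u\rangle_\lambda$. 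Isolating $u=\bar 0$ (where $\langle Q_{\bar 0},Q_{\bar 0}\rangle_\lambda=1$) and inserting the cluster expansion for the terms $u\neq\bar 0$, then regrouping by $n=|\gamma|$, identifies the coefficient of $\lambda^n$ as the stated $V_n$ and gives $V=1+\sum_{n\geqslant 1}\lambda^nV_n$, a convergent series with $V>0$ for small $|\lambda|$. For $\alpha>\nu$ the prefactor $k^{\nu-\alpha}\to 0$, so the second moment and hence the mean square (the mean being $0$) tend to $0$, proving part 1; for $\alpha=\nu$ the prefactor is $1$ and the second moment tends to $V$.

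For part 2 I would then show that every other semi-invariant of the rescaled field vanishes in the limit. Expanding
\[\langle Y^{(k)}_{\tau_1},\ldots,Y^{(k)}_{\tau_p}\rangle_\lambda=k^{-\nu p/2}\sum_{t_1\in G_k^{-1}(\tau_1)}\!\!\cdots\!\!\sum_{t_p\in G_k^{-1}(\tau_p)}\langle Q_{t_1},\ldots,Q_{t_p}\rangle_\lambda\]
and substituting the cluster expansion, a nonzero term requires a family $\gamma$ whose associated graph connects all the $t_j$. If some $\tau_i\neq\tau_j$ the cubes lie at lattice distance of order $k$, so any connecting $\gamma$ has $|\gamma|$ of order $k$ and contributes at most $c\,|\lambda|^{c'k}$, which decays exponentially; this kills the off-diagonal covariances ($p=2$, $\tau_1\neq\tau_2$). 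If all $\tau_i=\tau$, connectivity confines the points to a cluster of bounded diameter once $\gamma$ is fixed, so for each family shape there are of order $k^\nu$ admissible placements inside the cube rather than $k^{\nu p}$; with the prefactor this is of order $k^{-\nu p/2}\cdot k^\nu=k^{-\nu(p-2)/2}$, which tends to $0$ for every $p\geqslant 3$, the summation over all $\gamma$ staying finite by the Section 4 bounds for $|\lambda|<C$.

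It remains to assemble these facts. For fixed distinct $\tau_1,\ldots,\tau_r$ the above shows that as $k\to\infty$ every semi-invariant of $(Y^{(k)}_{\tau_1},\ldots,Y^{(k)}_{\tau_r})$ of order $\geqslant 3$ tends to $0$, every second-order semi-invariant with distinct indices tends to $0$, and $\langle Y^{(k)}_{\tau_j},Y^{(k)}_{\tau_j}\rangle_\lambda\to V>0$. Since semi-invariants determine the distribution, the limiting semi-invariants satisfy exactly the hypotheses of Lemma \ref{lemma:Carleman}, so the limit of $(Y^{(k)}_{\tau_1},\ldots,Y^{(k)}_{\tau_r})$ is an independent multivariate normal vector with mean $0$ and common variance $V$; as this holds for every finite subfamily, it yields convergence in distribution of the whole field, which is part 2. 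I expect the main obstacle to be the diagonal counting step: one must show rigorously that connectivity collapses the effective number of free summation variables from $p$ to $1$ — producing the decisive factor $k^\nu$ instead of $k^{\nu p}$ — while simultaneously keeping the sum over families $\gamma$ of all lengths finite and uniformly controlled in $k$, so that the $k\to\infty$ limit may be interchanged with the summation over $\gamma$. Both depend on the combinatorial estimates of Section 4.
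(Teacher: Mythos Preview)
Your treatment of the diagonal variance and of the higher-order semi-invariants ($p\geqslant 3$) is essentially the paper's argument; your displacement rewriting $\sum_u\prod_i(1-|u_i|/k)^+\langle Q_{\bar 0},Q_u\rangle_\lambda$ is a clean variant of the paper's ``inner cube'' device in Theorem~\ref{theorem:variance}, and your counting for $p\geqslant 3$ (one free vertex $t_1$, then $\gamma$, then the remaining $t_j\in\tilde\gamma$) is exactly the mechanism the paper uses in the proof of \eqref{eq:lim_over2}.

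There is, however, a real gap in your handling of the off-diagonal covariances $\langle Y^{(k)}_\tau,Y^{(k)}_\theta\rangle_\lambda$ for $\tau\neq\theta$. The blocks $G_k^{-1}(\tau)$ and $G_k^{-1}(\theta)$ need not be at distance of order $k$: if $\tau$ and $\theta$ are adjacent the blocks share a face, and there are pairs $(s,t)$ with $\rho(s,t)=1$, so connecting families with $|\gamma|=1$ do occur. Hence the claim ``any connecting $\gamma$ has $|\gamma|$ of order $k$ and contributes $c\,|\lambda|^{c'k}$'' is false as stated, and the exponential-in-$k$ suppression you invoke does not exist. What actually makes the off-diagonal covariance vanish is a boundary-layer effect: only the $O(k^{\nu-1})$ pairs near the common face can be connected by short $\gamma$, and after the normalisation $k^{-\nu}$ this is $O(1/k)$. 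The paper makes this precise by slicing one block into cross-sections $D_l$ at distance $l+1$ from the other block, noting that any $\gamma$ connecting $t\in D_l$ to $s$ in the other block must have $|\gamma|\geqslant l+1$, and then summing the resulting bound $\sum_{l=0}^{k-1}k^{\nu-1}\sum_{n>l}(n-l)|\lambda C_2C_3|^n$ over $l$. You could equally well repair your argument by writing the off-diagonal covariance in displacement form as you did for the diagonal term and observing that the relevant weight tends to $0$ pointwise, but some such refinement is needed; the one-line ``distance of order $k$'' step does not go through.
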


Proof will be given in Section 5.

The classical central limit theorem considers a sequence $X_1, X_2,X_3,\ldots$ of independent, identically distributed random variables with finite variances and states that as $n\rightarrow \infty$, their normalized sum $\dfrac{1}{\sqrt{n}}\sum_{i=1}^n(X_i-\mu)$ converges in distribution to a normal random variable. 

Theorem \ref{theorem:main} can be considered as a generalization of the classical central limit theorem, in some sense. Instead of a sequence of random variables we have a sequence of random fields $\lbrace Y^{(k)}_\tau \mid\tau\in \mathbb{Z}^\nu \rbrace $ on a multi-dimensional integer lattice. The variables $Q_t$ are identically distributed (each has Bernoulli distribution). They are weakly dependent because $|\lambda|<C$ and $\lambda$ characterizes the strength of the interaction. 

Like the classical central limit theorem, Theorem \ref{theorem:main}.2) also considers a normalized sum, that is the sum $\sum_{t \in G_k^{-1}(\tau)} Q_t$ divided by square root of the number $k^\nu$ of addends in the sum. Theorem \ref{theorem:main}.2) states convergence in distribution and that the limiting distribution is normal but in this case it is the distribution of an independent normal field. In other words, Theorem \ref{theorem:main}.2) states: in systems with weak interaction the distribution of the normalized sums over big regions is approximately independent and normal.

\section{Estimation of dependencies}
\label{section:graphs}
\subsection{Estimation Theorem}

The proof of the central limit theorem in Section \ref{section:proof} is based on estimations of semi-invariants. In this section we prove an inequality (Theorem \ref{theorem:estimation}), which will be applied to estimating semi-invariants. 

In this section we fix a set $\mathfrak{A}$ and a reflexive, symmetric binary relation on $\mathfrak{A}$. If $A, B\in \mathfrak{A}$ are in this relation, then we say that $A$ and $B$ are \textit{linked}. Thus, $A$ is always linked to $A$ (reflexivity). If $A$ is linked to $B$, then $B$ is linked to $A$ (symmetry).

We assume that there exists a constant $L$ such that each element $A\in \mathfrak{A}$ is linked to at most $L$ elements in $\mathfrak{A}$. We fix this constant $L$.

\begin{definition}
For a family $\alpha=\{(T_1,n_1), \ldots,(T_m,n_m)\}$ of elements of $\mathfrak{A}$ and any $i=1,2,\ldots,m$ we denote 
\[\upsilon(T_i)=\upsilon_{\alpha}(T_i)=\sum_{\{j\in M:T_j\text{ is linked to }T_i\}} n_j.\]
\end{definition}
This sum has at most $L$ addends. 

The following property of natural logarithm will be used in proofs later: 
\begin{equation}
\text{for any }x>0:\;\ln(1+x)< x.
\label{eq:logarithm}
\end{equation}

In probability terms, the following theorem estimates the number of pairwise dependencies in a random field. This theorem is similar to the theorem in \cite{MM91}, pg. 59, estimating the number of intersections. The theorem in \cite{MM91} has a stronger conclusion. Our theorem has a simpler proof by induction and more general conditions: it is stated for an abstract symmetric binary relation while the theorem in \cite{MM91} is stated for a particular binary relation (when two sets have a non-empty intersection in a countable metric space). 

\begin{theorem} (Estimation Theorem)
Denote $C_L = \ln L +L^2$. For any family $\alpha=\{(T_1,n_1), \ldots,(T_m,n_m)\}$, $m\geqslant 1$, the following inequality holds:
\[C_L\sum_{i=1}^m n_i +\sum_{i=1}^m n_i \ln n_i>\sum_{i=1}^m n_i\ln \upsilon_\alpha(T_i).\]
\label{theorem:estimation}
\end{theorem}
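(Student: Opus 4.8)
The plan is to argue by induction on the number $m$ of distinct elements of the family $\alpha$, using the elementary bound \eqref{eq:logarithm} together with two structural facts recorded above: by reflexivity each sum $\upsilon_\alpha(T_i)$ contains the summand $n_i$, so $\upsilon_\alpha(T_i)\geqslant n_i$; and, since each element is linked to itself and to at most $L$ elements in total, each $\upsilon$-sum has at most $L$ addends and $T_m$ is linked to at most $L-1$ elements other than itself. For the base case $m=1$ reflexivity gives $\upsilon_\alpha(T_1)=n_1$, so the inequality collapses to $C_L n_1>0$, which holds because $C_L=\ln L+L^2>0$.

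For the inductive step the decisive choice—and the crux of the argument—is to delete the element of \emph{maximal} multiplicity. Relabel so that $n_m\geqslant n_i$ for all $i$ and set $\beta=\{(T_1,n_1),\ldots,(T_{m-1},n_{m-1})\}$. I would then subtract the two sides of the desired inequality for $\alpha$ from those for $\beta$. Passing from $\beta$ to $\alpha$ replaces $\upsilon_\beta(T_i)$ by $\upsilon_\beta(T_i)+n_m$ exactly for those $i<m$ with $T_i$ linked to $T_m$, and adds the new term $n_m\ln\upsilon_\alpha(T_m)$; hence, granting the inductive inequality for $\beta$, it suffices to prove
\[
C_L n_m + n_m\ln n_m \;\geqslant\; \sum_{\{i<m\,:\,T_i\text{ linked to }T_m\}} n_i\ln\frac{\upsilon_\beta(T_i)+n_m}{\upsilon_\beta(T_i)} \;+\; n_m\ln\upsilon_\alpha(T_m),
\]
after which the strict inequality for $\alpha$ is inherited from the strict one for $\beta$.

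The two terms on the right are estimated separately. For each summand, $\ln\!\bigl(1+n_m/\upsilon_\beta(T_i)\bigr)<n_m/\upsilon_\beta(T_i)\leqslant n_m/n_i$ by \eqref{eq:logarithm} and $\upsilon_\beta(T_i)\geqslant n_i$, so the summand is below $n_m$; as at most $L-1$ indices $i<m$ are linked to $T_m$, the whole sum is at most $n_m(L-1)$. For the last term, maximality of $n_m$ makes $\upsilon_\alpha(T_m)$ a sum of at most $L$ multiplicities each $\leqslant n_m$, so $\upsilon_\alpha(T_m)\leqslant L n_m$ and $n_m\ln\upsilon_\alpha(T_m)\leqslant n_m\ln L+n_m\ln n_m$. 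Feeding both bounds into the displayed inequality and cancelling $n_m\ln n_m$ reduces it to $n_m(L-1)\leqslant n_m L^2$, i.e. to $L-1\leqslant L^2$, which holds trivially.

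The main obstacle I anticipate is taming the term $n_m\ln\upsilon_\alpha(T_m)$: for an arbitrary removed element, $\upsilon_\alpha(T_m)$ can be as large as $L$ times the biggest multiplicity among the neighbours of $T_m$, and the logarithm of that quantity is not absorbed by the available $n_m\ln n_m$. Removing the element of largest multiplicity is exactly what forces $\upsilon_\alpha(T_m)\leqslant L n_m$, so that the $n_m\ln n_m$ on the left cancels the $n_m\ln n_m$ produced by this term and the remaining cost $L-1$ is comfortably beaten by the slack $L^2$ built into $C_L$.
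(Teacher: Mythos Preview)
Your proof is correct, but it follows a genuinely different induction scheme from the paper's. The paper inducts on $k=\max\{n_1,\dots,n_m\}$: given a family with maximum multiplicity $k+1$, it caps every multiplicity at $k$ (setting $n_i'=\min\{n_i,k\}$), applies the inductive hypothesis to the capped family $\alpha'$, and then proves the increment inequality $f(\alpha)-f(\alpha')>g(\alpha)-g(\alpha')$ by splitting it into three pieces (handled separately over the set $I=\{i:n_i=k+1\}$ and over the set $V=\{i:\upsilon_i'<\upsilon_i\}$). You instead induct on $m$, deleting the element of largest multiplicity. Both arguments rest on the same three ingredients---$\upsilon(T_i)\geqslant n_i$ by reflexivity, at most $L$ summands in each $\upsilon$, and $\ln(1+x)<x$---but your organisation is tighter: the base case $m=1$ is a one-liner, and the inductive step needs only two estimates rather than three, with no auxiliary index sets. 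The paper's scheme, on the other hand, keeps the underlying set $\{T_1,\dots,T_m\}$ fixed throughout, which may be more natural if one later wants uniformity in the family structure; but for the bare inequality your route is shorter.
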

\begin{proof} Denote $M=\{1, 2,\ldots,m\}$. In the proof for brevity we will write $C$ for $C_L$ and $\upsilon_i$ for $\upsilon_\alpha(T_i)$. Denote
\[f(\alpha)=C\sum_{i=1}^m n_i +\sum_{i=1}^m n_i \ln n_i \;\text{ and }\;g(\alpha)=\sum_{i=1}^m n_i\ln \upsilon_i.\] 

Thus, we need to prove:
\begin{equation}
f(\alpha)>g(\alpha).
\label{T2}
\end{equation}

The proof is by induction on $k=\max\{n_1,\ldots,n_m\}$.  Clearly, for any $i\in M:$
\begin{equation}
\upsilon_i\leqslant kL.
\label{eq:est_epsilon}
\end{equation}

\textit{Basis of induction:} $k=1$.
Then each $n_i=1$ and $\upsilon_i\leqslant L$. So 
\begin{multline*}
f(\alpha)=C\sum_{i=1}^m 1 +\sum_{i=1}^m 1 \ln 1=Cm=m(\ln L+L^2)>m\ln L=\sum_{i=1}^m\ln L
\\
\geqslant \sum_{i=1}^m\ln \upsilon_i=g(\alpha).
\end{multline*}

\textit{Inductive step.} 
Assume that \eqref{T2} holds for $k$ $(k\geqslant 1).$

Consider a family $\alpha=\{(T_1,n_1), \ldots,(T_m,n_m)\}$ with $\max\{n_1,\ldots,n_m\}=k+1.$ For any $i\in M$ denote $n'_i=\min\{n_i,k\}.$ Define a new family $\alpha'=\{(T_1,n'_1), \ldots,(T_m,n'_m)\}$ and denote $\upsilon'_i=\upsilon_{\alpha'}(T_i)$.
\medskip

For this family $\max\{n'_1,\ldots,n'_m\}=k$ and for any $i\in M,$ 
\[\upsilon'_i=\sum_{\{j\in M:T_j\text{ is linked to }T_i\}} n'_j.\]

So by the inductive assumption:
\begin{equation}
f(\alpha')>g(\alpha').
\label{eq:ind_assumption}
\end{equation}

It is sufficient to prove:
\begin{equation}
f(\alpha)-f(\alpha')>g(\alpha')-g(\alpha').
\label{eq:ind_step}
\end{equation}

Adding the inequalities \eqref{eq:ind_assumption} and \eqref{eq:ind_step} we get $f(\alpha)>g(\alpha).$
\begin{center}
Proof of \eqref{eq:ind_step}
\end{center}
\begin{multline*}
f(\alpha)-f(\alpha')=C\sum_{i=1}^m n_i +\sum_{i=1}^m n_i \ln n_i-\left(C\sum_{i=1}^m n'_i +\sum_{i=1}^m n'_i \ln n'_i\right)
\\
= C\sum_{i=1}^m(n_i-n'_i) +\sum_{i=1}^m (n_i \ln n_i-n'_i \ln n'_i).
\end{multline*}

Denote $I=\{i\in M\mid n_i=k+1\}.$

For $i\in I:$ $n_i=k+1,$ $n'_i=k$, $n_i-n'_i=1$ and 
\begin{multline*}
n_i\ln n_i-n'_i\ln n'_i=
(k+1)\ln(k+1)-k\ln k=\ln(k+1)+k[\ln(k+1)-\ln k]
\\
=\ln(k+1)+k\ln\left(1+\frac{1}{k}\right).
\end{multline*}

For $i\in M\setminus I:$ $n'_i=n_i,$ $n_i-n'_i=0$ and $n_i\ln n_i-n'_i\ln n'_i=0$. 

Denote $|I|$ the number of elements in the set $I$. So 
\begin{multline*}
f(\alpha)-f(\alpha')=C\sum_{i\in I}1 +\sum_{i\in I}\left[\ln(k+1)+k\ln\left(1+\frac{1}{k} \right)\right]
\\
>C\sum_{i\in I}1 +\sum_{i\in I} \ln(k+1)
=|I|C +|I|\ln(k+1)
=|I| \left[C+\ln(k+1)\right]
\\
=|I| \left[\ln L+L^2+\ln(k+1)\right]
=|I| \left[\ln((k+1)L)+L+L^2-L\right]
\\
= |I| \ln((k+1)L)+|I| L+|I|  L(L-1).
\end{multline*}

Denote $V=\{i\in M\mid \upsilon'_i<\upsilon_i\}.$ Clearly, $I\subseteq V.$
Next we prove the following three inequalities:
\begin{equation}
|I|\ln((k+1)L)\geqslant\sum_{i\in I}\ln \upsilon_i; 
\label{T8}
\end{equation}
\begin{equation}
|I|L>\sum_{i\in I} k
\ln \upsilon_i-
\sum_{i\in I} k\ln \upsilon'_i;
\label{T9}
\end{equation}
\begin{equation}
|I|L(L-1)>\sum_{i\in V\setminus I} n_i\ln \upsilon_i-
\sum_{i\in V\setminus I} n'_i
\ln \upsilon'_i.
\label{I}
\end{equation}

If \eqref{T8}, \eqref{T9} and \eqref{I} are proven, then 
\begin{multline*}
f(\alpha)-f(\alpha')>\sum_{i\in  I}\ln \upsilon_i+\sum_{i\in  I}k\ln \upsilon_i-\sum_{i\in  I}k\ln \upsilon'_i+\sum_{i\in V\setminus I}n_i\ln \upsilon_i-\sum_{i\in V\setminus I}n'_i\ln \upsilon'_i
\\
=\sum_{i\in  I}(k+1)\ln \upsilon_i+\sum_{i\in V\setminus I}n_i\ln \upsilon_i-\left(\sum_{i\in I}n'_i\ln \upsilon'_i+\sum_{i\in V\setminus I}n'_i\ln \upsilon'_i \right)
\\
=\sum_{i=1}^m n_i\ln \upsilon_i-\sum_{i=1}^m n'_i\ln \upsilon'_i=g(\alpha)-g(\alpha'),
\end{multline*}
since for $i\in I$, $n_i=k+1$, $n'_i=k$, and for $i\in M\setminus V$, $n'_i=n_i$ and $\upsilon'_i=\upsilon_i$. That proves \eqref{eq:ind_step}.
\begin{center}
Proof of \eqref{T8}
\end{center}

By \eqref{eq:est_epsilon}, $\upsilon_i\leqslant (k+1)L$ and \[\sum_{i\in I} \ln (\upsilon_i)\leqslant
\sum_{i\in I} \ln\left((k+1)L \right)=|I|\ln\left((k+1)L \right).\]
\newpage
\begin{center}
Proof of \eqref{T9}
\end{center} 
\[\text{Since }\upsilon_i-\upsilon'_i=\sum_{\{j\in M:T_j\text{ is linked to }T_i\}}\left(n_j-n'_j \right) \leqslant L,\text{ we have }\upsilon_i\leqslant\upsilon'_i+L.\] 
For $i\in I$, $\upsilon'_i\geqslant n'_i=k$, and using \eqref{eq:logarithm} we get:
\[k\ln \left( \frac{\upsilon_i}{\upsilon'_i}\right) \leqslant k\ln \left( \frac{\upsilon'_i+L}{\upsilon'_i}\right)=k\ln \left(1+ \frac{L}{\upsilon'_i}\right)<k\frac{L}{\upsilon'_i}\leqslant k\frac{L}{k}=L;\] that is 
$k\ln \left( \frac{\upsilon_i}{\upsilon'_i}\right)<L.$ So

\[\sum_{i\in I} k
\ln \upsilon_i-
\sum_{i\in I} k\ln \upsilon'_i=\sum_{i\in I}k\ln \left( \frac{\upsilon_i}{\upsilon'_i}\right)<\sum_{i\in I}L=|I| L.\]

\begin{center}
Proof of \eqref{I}
\end{center} 

For $i\in V\setminus I$, $n'_i=n_i$. By \eqref{eq:logarithm} and since $\upsilon_i\leqslant\upsilon'_i+L$, we have:
\begin{multline*}
\sum_{i\in V\setminus I} n_i\ln \upsilon_i-
\sum_{i\in V\setminus I} n'_i
\ln \upsilon'_i=
\sum_{i\in V\setminus I} n_i\ln \upsilon_i-
\sum_{i\in V\setminus I} n_i
\ln \upsilon'_i
\\
=\sum_{i\in V\setminus I} n_i\ln \left( \frac{\upsilon_i}{\upsilon'_i}\right)
\leqslant
\sum_{i\in V\setminus I} n_i\ln \left( \frac{\upsilon'_i+L}{\upsilon'_i}\right)
=\sum_{i\in V\setminus I} n_i\ln \left(1+\frac{L}{\upsilon'_i}\right)
\\
<\sum_{i\in V\setminus I} n_i\frac{L}{\upsilon'_i}\leqslant
\sum_{i\in V\setminus I} n_i\frac{L}{n_i}=L| V\setminus I|
\text{, since }\upsilon'_i\geqslant n'_i=n_i\text{ for }i\in V\setminus I.
\end{multline*}
It remains to prove that $|V\setminus I|\leqslant |I|(L-1)$, which is equivalent to:
\begin{equation}
| V|\leqslant |I|L.
\label{eq:cardinality}
\end{equation}
\begin{center}
Proof of \eqref{eq:cardinality}
\end{center} 

Denote $V_i=\{j\in V\mid T_i\text{ and }T_j\text{ are linked}\}$. Then $| V_i|\leqslant L$.

If $j\notin \bigcup_{i\in I}V_i$, then no vertices $T_i$ with $i\in I$ are linked to $T_j$ and $\upsilon'_j=\upsilon_j$, so $j\notin V$. Therefore \[V\subseteq \bigcup_{i\in I}V_i\text{ and }|V|\leqslant\sum_{i\in  I}| V_i|\leqslant\sum_{i\in I}L=|I| L.\]

This completes the proof of Theorem \ref{theorem:estimation}. 
\end{proof}

\subsection{Application of the Estimation Theorem to semi-invariants}
In this subsection two subsets $A,B$ of $\mathbb{Z}^\nu$ are said to be \textit{linked} iff $A\cap B\neq \varnothing$ and $\mathfrak{A}$ is a set of finite non-empty subsets of $\mathbb{Z}^\nu$. We assume there exists a constant $L$ such that each $A\in\mathfrak{A}$ is linked to at most $L$ elements of $\mathfrak{A}$.

\begin{lemma}
Denote $C_{1}=3Le^{L^2+1}$. For any family $\alpha$ of elements of $\mathfrak{A}$ with the associated sequence
$\widehat{\alpha}=(T_1,T_2,\ldots,T_n)$:
\[\Big|\langle \prod_{t\in T_1}Q_t,
\prod_{t\in T_2}Q_t,\ldots,\prod_{t\in T_n}Q_t\rangle_0\Big|\leqslant 
(C_{1})^{|\alpha|}\alpha!\]
\label{lemma:estimates_1}
\end{lemma}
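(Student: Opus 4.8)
The plan is to reduce the statement to the single combinatorial inequality
\[
\Big|\langle Q_{T_1},\ldots,Q_{T_n}\rangle_0\Big|\;\leqslant\;3^{|\alpha|}\prod_{j=1}^{q}\upsilon_\alpha(B_j)^{n_j},
\]
where $\alpha=\{(B_1,n_1),\ldots,(B_q,n_q)\}$, $Q_{B_j}=\prod_{t\in B_j}Q_t$, and then to let the Estimation Theorem do the rest. Indeed, Theorem~\ref{theorem:estimation}, applied with the intersection relation as the linking relation, gives $\sum_j n_j\ln\upsilon_\alpha(B_j)<C_L|\alpha|+\sum_j n_j\ln n_j$ with $C_L=\ln L+L^2$; exponentiating yields $\prod_j\upsilon_\alpha(B_j)^{n_j}<e^{C_L|\alpha|}\prod_j n_j^{\,n_j}$. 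Combining this with the elementary bound $n^{n}\leqslant e^{n}\,n!$ (which follows from $e^{n}=\sum_{k}n^{k}/k!\geqslant n^{n}/n!$), so that $\prod_j n_j^{\,n_j}\leqslant e^{|\alpha|}\alpha!$, I obtain $\prod_j\upsilon_\alpha(B_j)^{n_j}<(Le^{L^2+1})^{|\alpha|}\alpha!$. Multiplying by $3^{|\alpha|}$ turns the displayed bound into $(3Le^{L^2+1})^{|\alpha|}\alpha!=C_1^{|\alpha|}\alpha!$, exactly as required; note that the constant $3$ inside $C_1$ is precisely the slack I must leave for the combinatorial step.

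It therefore remains to prove the displayed inequality. First I would record that each $Q_{T_i}$ takes values in $\{-1,1\}$, so every expectation $E_0(\prod_{i\in S}Q_{T_i})$ equals either $0$ or $1$: writing $\prod_{i\in S}Q_{T_i}=\prod_t Q_t^{c_t}$ with $c_t=\#\{i\in S:t\in T_i\}$ and using $Q_t^2=1$ together with Lemma~\ref{lemma:zero_lambda}, the expectation is $1$ when every point is covered an even number of times by the terms in $S$ and $0$ otherwise. Next I would invoke connectedness: two terms $Q_{T_i}$ and $Q_{T_j}$ are independent unless $T_i\cap T_j\neq\varnothing$, so if the associated graph of $\widehat\alpha$ is disconnected the cumulant vanishes by Lemma~\ref{lemma:sem}.2 and the bound is trivial. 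Hence I may assume the family is connected, and the task becomes to bound the partition sum $\sum_{\pi}(-1)^{|\pi|-1}(|\pi|-1)!$ taken over those partitions $\pi$ of the $n$ terms whose every block covers every point evenly.

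The heart of the argument, and the step I expect to be the main obstacle, is controlling the factorial weights $(|\pi|-1)!$ so that the surviving sum is bounded by $3^{|\alpha|}\prod_j\upsilon_\alpha(B_j)^{n_j}$ rather than by the much larger $\sim n!\,C^{n}$ that a naive triangle inequality over all partitions would give. The mechanism is a connected-partition (cluster) expansion: once disconnected configurations are discarded, the cumulant is dominated by a sum indexed by spanning trees of the intersection graph of $\widehat\alpha$, each carrying a product of uniformly bounded per-vertex cumulant factors $\langle Q_t,\ldots,Q_t\rangle_0$ of the underlying Bernoulli variables. The number of such spanning trees is bounded by the product of the vertex degrees, and the degree of each of the $n_j$ copies of $B_j$ is at most $\upsilon_\alpha(B_j)$, the number of terms (counted with multiplicity) meeting $B_j$; this produces exactly $\prod_j\upsilon_\alpha(B_j)^{n_j}$, while the geometric factor $3^{|\alpha|}$ absorbs the remaining counting slack together with the resummation of the bounded per-vertex contributions. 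I expect to carry out this counting by induction on $|\alpha|$, mirroring the inductive structure already used for the Estimation Theorem, with the even-coverage constraint from the previous paragraph guaranteeing that no isolated vertex occurs, so that only genuinely connected configurations survive.
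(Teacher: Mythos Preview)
Your overall architecture coincides with the paper's: reduce to the bound
\[
\Big|\langle Q_{T_1},\ldots,Q_{T_n}\rangle_0\Big|\leqslant \text{const}\cdot 3^{|\alpha|}\prod_{j=1}^{q}\upsilon_\alpha(B_j)^{n_j},
\]
then apply the Estimation Theorem to turn $\prod_j\upsilon_\alpha(B_j)^{n_j}$ into $e^{C_L|\alpha|}\prod_j n_j^{n_j}$, and finish with $n_j^{n_j}\leqslant e^{n_j}n_j!$. The paper does exactly this, with the minor difference that it quotes the displayed inequality (with an extra harmless factor $\tfrac32$) directly from Theorem~1 on p.~69 of \cite{MM91} and uses the Stirling form $k^k\leqslant\tfrac12 k!e^k$; your elementary bound $k^k\leqslant e^k k!$ works just as well here.

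Where your proposal diverges is that you undertake to reprove the cited Malyshev inequality rather than import it. Your sketch is aimed in the right direction---this is indeed a tree-graph (cluster) bound, and the vertex-degree product $\prod_j\upsilon_\alpha(B_j)^{n_j}$ is exactly what such a bound produces---but as written it is only an outline. The genuinely delicate point, which you flag yourself, is the cancellation that tames the weights $(|\pi|-1)!$: a naive triangle inequality over partitions gives $n!\,C^n$, and getting down to $3^{|\alpha|}\prod_j\upsilon_\alpha(B_j)^{n_j}$ requires a specific algebraic identity (interpolation/tree-graph formula) rather than a counting heuristic; ``induction on $|\alpha|$ mirroring the Estimation Theorem'' does not by itself supply that cancellation. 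So either cite \cite{MM91} as the paper does, or replace the last paragraph by an actual tree-graph argument (e.g.\ the Malyshev or Brydges--Battle--Federbush inequality) that produces the displayed bound; once that is in place, the remainder of your derivation is complete and matches the paper.
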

\begin{proof}
For brevity we denote the left hand side of this inequality by $H$. We can write the family $\alpha$ in the form $\alpha=\{(\overline{T}_1,n_1), \ldots,(\overline{T}_q,n_q)\}$. Then $n_1+n_2+\ldots+n_q=n=|\alpha|$ and $\alpha!=n_1!\cdot n_2!\cdot\ldots\cdot n_q!$

Theorem 1 on page 69 of \cite{MM91} implies that:
\begin{multline}
H\leqslant  \dfrac{3}{2}
\prod_{j=1}^n3\upsilon(T_j)
=\dfrac{3}{2}
\prod_{i=1}^q\left(3\upsilon(\overline{T}_i)\right)^{n_i}
=\dfrac{3}{2}
\prod_{i=1}^q3^{n_i}
\prod_{i=1}^q\left(\upsilon(\overline{T}_i)\right)^{n_i}
\\
=\dfrac{3}{2}
3^{|\alpha|}
\prod_{i=1}^q\left(\upsilon(\overline{T}_i)\right)^{n_i}.
\label{eq:Malyshev_0}
\end{multline}

By the Estimation Theorem (Theorem \ref{theorem:estimation}):
\begin{multline*}
\ln\left(\prod_{i=1}^q\left(\upsilon(\overline{T}_i)\right)^{n_i} \right) 
=
\sum_{i=1}^q n_i\ln \upsilon(\overline{T}_i)
<
C_L\sum_{i=1}^q n_i +\sum_{i=1}^q n_i \ln n_i
\\
=C_L|\alpha| +\sum_{i=1}^q n_i \ln n_i,
\end{multline*}
where $C_L=\ln L+L^2$. So
\[\prod_{i=1}^q\left(\upsilon(\overline{T}_i)\right)^{n_i} 
\leqslant e^{C_L|\alpha|}\prod_{i=1}^q e^{n_i\ln n_i}
= e^{C_L|\alpha|}\prod_{i=1}^q n_i^{n_i}.\]

By Stirling's formula, for any natural number $k$: $k^k=\dfrac{k!e^k}{\theta_k\sqrt{2\pi k}}$, where 
\\
$1<\theta_k<e$; so $k^k\leqslant \dfrac{1}{2} k!e^k.$ Then by \eqref{eq:Malyshev_0}:
\begin{multline*}
H\leqslant  \dfrac{3}{2}3^{|\alpha|}e^{C_L|\alpha|}\prod_{i=1}^q n_i^{n_i}\leqslant (3e^{C_L})^{|\alpha|}\prod_{i=1}^q n_i!e^{n_i}
= (3e^{\ln L+L^2})^{|\alpha|}e^{|\alpha|} \alpha!
\\
=(3Le^{L^2}e)^{|\alpha|}\alpha!=
(3Le^{L^2+1})^{|\alpha|}\alpha!
=(C_1)^{|\alpha|}\alpha!
\end{multline*}
\end{proof}

\section{Proof of the central limit theorem for Ising model}
\label{section:proof}

\subsection{Proof of Theorem 2.1}
\label{subsection_proof1}

In this subsection we prove a series of lemmas about estimates and limits of semi-invariants, and we use these lemmas in  subsections 5.2 and 5.3 for direct proof of the main theorem.

The following lemma is mentioned by several authors without a proof or with a complicated proof. Here we provide a short, simple proof giving an explicit value for the estimation constant. 

\begin{lemma}
Denote $C_2=4\nu^2$. Fix a sequence $b=(t_1,\ldots,t_m)$ of points in $\mathbb{Z}^\nu$, $m\geqslant 1$, and a natural number $n\geqslant 1$. The number of families $\gamma\in R^*$ such that $|\gamma|=n$ and $\gamma$ connects $b$, is not greater than $(C_2)^n$.
\label{lemma:Eulirean}
\end{lemma}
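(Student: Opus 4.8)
The plan is to give the encoding suggested by the lemma's name: realize each connecting family as a connected multigraph sitting in the lattice and encode it by a single closed lattice walk of length $2n$ that starts at the fixed point $t_1$. Since at each step of such a walk there are only $2\nu$ neighbours to move to, the number of walks is at most $(2\nu)^{2n}=(4\nu^2)^n$, and an injection from families into such walks will finish the proof.

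First I would set up the multigraph. Fix a family $\gamma$ of length $n\geqslant 1$ connecting $b$, with associated sequence $\widehat{\gamma}=(A_1,\ldots,A_n)$, and let $\Gamma$ be the multigraph whose edges are $A_1,\ldots,A_n$ (so an edge $\{r,s\}\in R$ occurring with multiplicity $n_i$ in $\gamma$ appears $n_i$ times in $\Gamma$). Because $n\geqslant 1$ the associated graph has at least one edge, so every $t_i$ must be an endpoint of some $A_j$: otherwise $t_i$ would be an isolated vertex and the associated graph could not be connected. In particular $t_1$ is a vertex of $\Gamma$, and the underlying simple graph of $\Gamma$ is connected.

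Next comes the key trick. Form the doubled multigraph $\Gamma'$ in which each edge of multiplicity $n_i$ is given multiplicity $2n_i$; then $\Gamma'$ has the same (connected) vertex set, exactly $2n$ edges counted with multiplicity, and every vertex has even degree. By Euler's theorem $\Gamma'$ has an Eulerian circuit, which I start at $t_1$ to obtain a closed walk $W_\gamma=(t_1=v_0,v_1,\ldots,v_{2n}=t_1)$ with $\{v_j,v_{j+1}\}\in R$ for each $j$, traversing each of the $2n$ edge-copies of $\Gamma'$ exactly once. From $W_\gamma$ one reads off the multiset of traversed edges, which is precisely $\Gamma'$; halving multiplicities recovers $\Gamma$, hence $\gamma$. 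Thus $\gamma\mapsto W_\gamma$ is injective into the set of closed lattice walks of length $2n$ based at $t_1$.

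Finally I would count: a walk of length $2n$ from $t_1$ is determined by its successive steps, and each vertex of $\mathbb{Z}^\nu$ has exactly $2\nu$ neighbours, so there are at most $(2\nu)^{2n}$ such walks; hence the number of connecting families of length $n$ is at most $(2\nu)^{2n}=(4\nu^2)^n=(C_2)^n$. The step needing the most care is the injectivity of this encoding together with the applicability of Euler's theorem: the doubling is exactly what forces all degrees to be even (so an Eulerian circuit exists) while keeping the total traversed length equal to $2n$, and the fact that the circuit uses each edge-copy precisely once is what lets the multiplicities $n_i$, and therefore $\gamma$ itself, be reconstructed from $W_\gamma$. The bound on the number of walks is then immediate from the constant degree $2\nu$ of the lattice.
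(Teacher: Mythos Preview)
Your argument is correct and follows the same Eulerian-doubling strategy as the paper: double each edge to make all degrees even, extract an Eulerian circuit of length $2n$, and bound by the number of lattice walks of that length. Your write-up is in fact a bit more careful than the paper's, making explicit both why $t_1$ lies on some $A_j$ (so the circuit can be based there) and why the encoding $\gamma\mapsto W_\gamma$ is injective.
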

\begin{proof}
Consider the associated sequence $\widehat{\gamma}=(A_1,\ldots,A_n)$ and the associated graph $G$ of $b$ and $\gamma$. A new graph $G'$ is obtained from $G$ by adding for each $i=1,\ldots,n$, an extra edge $A'_i$ that has the same ends as $A_i$. Then each vertex in $G'$ has an even degree and hence $G'$ has an Eulirean path, that is a closed path which includes every edge of the graph exactly once; the length of such a path is $2n$. 

Therefore the number of the families with $|\gamma|=n$ that connect $b$, is not greater than the number of paths with $2n$ steps through $t_1,\ldots,t_m$ going along edges of the lattice $\mathbb{Z}^\nu$. There are at most $2\nu$ directions at each step. Therefore the number of such paths is not greater than $(2\nu)^{2n}=(C_2)^n$ for $C_2=(2\nu)^2$. 
\end{proof}

\begin{lemma}
Suppose $A_1,A_2,\ldots,A_n\in R$ and $t\in \mathbb{Z}^\nu$. Then
\[\langle Q_{t},
\Phi_{A_1},\ldots, \Phi_{A_n}\rangle_0=0.\]
\label{lemma:capital_phi_1}
\end{lemma}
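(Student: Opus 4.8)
The plan is to reduce the claim to the parity statement of Lemma \ref{lemma:zero_lambda}.6) by expanding the semi-invariant through its definition. First I would write the $n+1$ arguments as $X_0 = Q_t$ and $X_i = \Phi_{A_i}$ for $i = 1, \ldots, n$, so that by the definition of semi-invariant the quantity $\langle Q_t, \Phi_{A_1}, \ldots, \Phi_{A_n}\rangle_0$ equals $\sum_\alpha (-1)^{k-1}(k-1)!\,\langle X_{S_1}\rangle_0 \cdots \langle X_{S_k}\rangle_0$, where the sum runs over all partitions $\alpha = \{S_1, \ldots, S_k\}$ of the index set $\{0, 1, \ldots, n\}$, each $X_{S_j} = \prod_{i \in S_j} X_i$, and each one-argument semi-invariant $\langle X_{S_j}\rangle_0$ is simply the expectation of $X_{S_j}$ with respect to $P_{0,N}$.

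The heart of the argument is a parity count. Each $\Phi_{A_i} = Q_{r_i} Q_{s_i}$ is a product of exactly two spin variables, while $Q_t$ is a single one; hence every $X_{S_j}$ is itself a product of $Q$-variables, and the number of its factors (counted with multiplicity) is odd exactly when $S_j$ contains the index $0$ and even otherwise. Precisely, if $S_j$ contains $0$ together with $\ell$ of the indices $1, \ldots, n$, then $X_{S_j}$ is a product of $1 + 2\ell$ factors $Q$, an odd number.

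I would then apply Lemma \ref{lemma:zero_lambda}.6), which asserts that the expectation under $P_{0,N}$ of any product of an odd number of the variables $Q$ is zero. In each partition $\alpha$ the index $0$ lies in exactly one block $S_{j_0}$, and by the parity count $X_{S_{j_0}}$ is a product of an odd number of $Q$-variables, so $\langle X_{S_{j_0}}\rangle_0 = 0$; consequently the whole product $\langle X_{S_1}\rangle_0 \cdots \langle X_{S_k}\rangle_0$ vanishes. Since this happens for every partition, the entire sum is zero, which is exactly the claimed identity.

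The one point requiring care is that the lattice points occurring in $Q_t$ and the $\Phi_{A_i}$ need not be distinct, and the edges $A_i$ may repeat, so the products $X_{S_j}$ may involve repeated points. This is harmless: Lemma \ref{lemma:zero_lambda}.6) is stated for arbitrary (possibly repeated) points and only requires that the total number of factors be odd, so the parity count above is all that the proof needs.
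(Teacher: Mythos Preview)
Your proof is correct and follows essentially the same approach as the paper: expand the semi-invariant via its defining sum over partitions, observe that in each partition the block containing the index of $Q_t$ yields a product of an odd number of $Q$-factors, and invoke Lemma~\ref{lemma:zero_lambda}.6) to kill that factor and hence the whole term. Your explicit remark about possibly repeated lattice points is a nice clarification that the paper leaves implicit.
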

\begin{proof}
Denote $M=\{t,A_1,A_2,\ldots,A_n\}$ the set of indices, where repeated elements are counted separately and each $A_j=\{r_j,s_j\}\subset\mathbb{Z}^\nu$. By the definition of semi-invariants, 
\[\langle Q_{t},
\Phi_{A_1},\ldots, \Phi_{A_n}\rangle_0=\sum_\alpha (-1)^{k-1} (k-1)!\langle Q_{t}\cdot\Phi_{S_1}\rangle_0\langle \Phi_{S_2}\rangle_0\ldots \langle\Phi_{S_k}\rangle_0,\]
where the sum is taken over all partitions  
$\alpha=\lbrace \{t\}\cup S_1,S_2,\ldots,S_k\rbrace $ 
of $M$; each $S_i\subseteq \{A_1,A_2,\ldots,A_n\}$; without loss of generality we can assume that $t$ belongs to the first set in each partition. We will show that each addend in this sum equals 0. 

In partition $\alpha$, $S_1$ has the form: $S_1=\{A_{j_1},\ldots,A_{j_q}\},q\geqslant0$. The addend corresponding to $\alpha$ contains this multiplier:
\begin{multline*}
\langle Q_{t}\cdot\Phi_{S_1}\rangle_0=\langle Q_{t}\cdot\cdot
\Phi_{A_{j_1}}\cdot\ldots\cdot \Phi_{A_{j_q}}\rangle_0
\\
=\langle Q_{t}\cdot (Q_{r_{j_1}}\cdot Q_{s_{j_1}})\cdot\ldots\cdot
(Q_{r_{j_l}}\cdot Q_{s_{j_l}})
\rangle_0
=0
\end{multline*}
by Lemma \ref{lemma:zero_lambda}.6) because the number of multipliers in the last product equals $2q+1$, which is odd. This completes the proof of the lemma.
\end{proof}

\begin{lemma}
Suppose $b$ is a sequence of points in $\mathbb{Z}^\nu$, $\gamma\in R^*$ and $\gamma$ does not connect $b$. Then $\langle Q^,_b,\Phi^\backprime_\gamma\rangle_0=0.$
\label{lemma:capital_phi_2}
\end{lemma}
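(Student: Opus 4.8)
The plan is to exploit the independence structure of the field $\{Q_t\}$ under $P_{0,N}$ together with the multi-linearity and the vanishing property of semi-invariants (Lemma \ref{lemma:sem}.2). The key observation is that if $\gamma$ does not connect $b$, then the associated graph of $b$ and $\gamma$ splits into at least two connected components that share no vertices. Since every point of $\mathbb{Z}^\nu$ appearing as an index in the semi-invariant $\langle Q^,_b,\Phi^\backprime_\gamma\rangle_0$ lies in exactly one of these components, the random variables indexed by distinct components involve disjoint sets of lattice nodes.

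First I would write the associated sequence $\widehat{\gamma}=(A_1,\ldots,A_n)$ and let $b=(t_1,\ldots,t_m)$, so that the semi-invariant in question is $\langle Q_{t_1},\ldots,Q_{t_m},\Phi_{A_1},\ldots,\Phi_{A_n}\rangle_0$. Because the graph is disconnected, I would partition its vertex set into two non-empty parts $U$ and $W$ with no edge of $\widehat{\gamma}$ crossing between them and with each $t_i$ lying entirely in one part. This induces a partition of the list of random variables into two groups: one group $(X_1,\ldots,X_p)$ consisting of all the $Q_{t_i}$ and $\Phi_{A_j}$ whose indices lie in $U$, and the other group $(X_{p+1},\ldots,X_{p+q})$ consisting of those whose indices lie in $W$. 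The crucial point is that no edge straddles the two parts, so each $\Phi_{A_j}=Q_rQ_s$ has both its endpoints in a single part.

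Next I would invoke Lemma \ref{lemma:zero_lambda}.2, which states that $\{Q_t\mid t\in\Lambda_N\}$ is an independent random field with respect to $P_{0,N}$ (and hence, passing to the limit, under the measure defining $\langle\cdot\rangle_0$). Since the two groups of variables depend on disjoint sets of lattice nodes, the random vectors $(X_1,\ldots,X_p)$ and $(X_{p+1},\ldots,X_{p+q})$ are independent of each other. By the symmetry of semi-invariants (Lemma \ref{lemma:sem}.1), I may reorder the arguments so that all of the first group precedes all of the second, and then apply Lemma \ref{lemma:sem}.2 directly to conclude that the semi-invariant vanishes.

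The main obstacle, though a mild one, will be the bookkeeping needed to verify that the partition of vertices induces a clean partition of the \emph{variables} with no shared lattice nodes between the two groups — in particular confirming that each $\Phi_{A_j}$ belongs wholly to one group (which follows since $A_j$ is an edge and edges do not cross components) and that the $Q_{t_i}$ are likewise assigned consistently. Once this combinatorial fact is established, the conclusion is immediate from the two cited parts of Lemma \ref{lemma:sem}. I should also note the special degenerate case where $\gamma$ is empty or $b$ reduces to a single component after accounting for repeated points, but the disconnectedness hypothesis rules these out.
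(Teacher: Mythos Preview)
Your proposal is correct and follows essentially the same route as the paper: decompose the disconnected associated graph into two vertex-disjoint pieces, observe that the corresponding random vectors involve disjoint sets of lattice nodes and are hence independent under $P_{0,N}$ by Lemma~\ref{lemma:zero_lambda}.2, and then apply Lemma~\ref{lemma:sem}.2 (after reordering) to conclude that the semi-invariant vanishes. The paper's proof is just a terser version of what you wrote, with the reordering handled by a ``without loss of generality'' rather than an explicit appeal to symmetry.
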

\begin{proof}
Consider the associated graph $G$ of $b$ and $\gamma$. Since $\gamma$ does not connect $b$, we have $G=G_1\cup G_2$, where $G_1$ and $G_2$ are disjoint graphs. Without loss of generality we can write:
\[b=(t_1,\ldots,t_k,s_1,\ldots,s_l)\text{ and }\widehat{\gamma}=(A_1,\ldots,A_m,B_1,\ldots,B_n),\] where $G_1$ corresponds to $(t_1,\ldots,t_k),(A_1,\ldots,A_m)$ and $G_2$ corresponds to \\$(s_1,\ldots,s_l),(B_1,\ldots,B_n)$. 

By Lemma \ref{lemma:zero_lambda}.2), the random vectors $(Q_{t_1},\ldots,Q_{t_k},\Phi_{A_1},\ldots, \Phi_{A_m})$ and \\$(Q_{s_1},\ldots, Q_{s_l},\Phi_{B_1},\ldots, \Phi_{B_n})$ are independent. So by Lemma \ref{lemma:sem}.2):
\[\langle Q^,_b,\Phi^\backprime_\gamma\rangle_0=\langle Q_{t_1},\ldots,Q_{t_k},
\Phi_{A_1},\ldots,\Phi_{A_{m}},
Q_{s_1},\ldots,Q_{s_l},\Phi_{B_1},\ldots, \Phi_{B_n}\rangle_0=0.\]
\end{proof}

\begin{lemma}
Suppose $f:\mathbb{Z}^\nu\rightarrow\mathbb{Z}^\nu$ is a bijection preserving distances. Suppose $t_1,\ldots,t_m\in\mathbb{Z}^\nu$ and $A_1,\ldots,A_n\in R$, where each $A_j=\{r_j,q_j\}$. 
\medskip

Denote $s_i=f(t_i)$, $i=1,\ldots,m$, and $B_j=\{f(r_j),f(q_j)\}$, $j=1,\ldots,n$. 

Then the random vectors $(Q_{t_1},\ldots,Q_{t_m},\Phi_{A_1},\ldots,\Phi_{A_n})$ and 
\\
$(Q_{s_1},\ldots,Q_{s_m},\Phi_{B_1},\ldots,\Phi_{B_n})$ have the same distribution with respect to measure $P_{0,N}$.
\label{lemma:translation_2}
\end{lemma}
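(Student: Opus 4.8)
The plan is to reduce this statement to Lemma \ref{lemma:translation_1}, which already provides invariance of the $P_{0,N}$-distribution of the $Q$-field under an arbitrary bijection. First I would isolate the one place where the distance-preserving hypothesis is actually used: since each $A_j=\{r_j,q_j\}\in R$ satisfies $\rho(r_j,q_j)=1$ and $f$ preserves distances, we get $\rho(f(r_j),f(q_j))=1$, so $B_j=\{f(r_j),f(q_j)\}\in R$ and $\Phi_{B_j}$ is well-defined. Beyond guaranteeing that the $B_j$ are genuine edges, the distance-preserving property is not needed; the distributional identity will follow from bijectivity alone.

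Next I would observe that both target vectors are the image, under one and the same deterministic map, of the underlying $Q$-variables at all the points and endpoints involved. Concretely, since $\Phi_A=Q_rQ_s$ for $A=\{r,s\}$, the vector $(Q_{t_1},\ldots,Q_{t_m},\Phi_{A_1},\ldots,\Phi_{A_n})$ is obtained from the longer vector $(Q_{t_1},\ldots,Q_{t_m},Q_{r_1},Q_{q_1},\ldots,Q_{r_n},Q_{q_n})$ by keeping the first $m$ coordinates and multiplying each subsequent pair; in the same way $(Q_{s_1},\ldots,Q_{s_m},\Phi_{B_1},\ldots,\Phi_{B_n})$ is the image of $(Q_{s_1},\ldots,Q_{s_m},Q_{f(r_1)},Q_{f(q_1)},\ldots,Q_{f(r_n)},Q_{f(q_n)})$ under the very same product map.

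The key step is then to apply Lemma \ref{lemma:translation_1} to the single combined sequence of all $m+2n$ points $(t_1,\ldots,t_m,r_1,q_1,\ldots,r_n,q_n)$. Since $f$ is a bijection carrying this sequence to $(s_1,\ldots,s_m,f(r_1),f(q_1),\ldots,f(r_n),f(q_n))$, the two longer $Q$-vectors have the same distribution with respect to $P_{0,N}$. Because applying a fixed measurable function to equally distributed random vectors yields equally distributed outputs, applying the common product map to both sides gives the desired conclusion.

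I do not expect a genuine obstacle here; the only point to watch is that the points $t_i,r_j,q_j$ need not be distinct, but Lemma \ref{lemma:translation_1} is stated for arbitrary sequences and treats coincidences explicitly, so the combined sequence may be passed to it without modification.
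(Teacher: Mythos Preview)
Your proposal is correct and follows essentially the same route as the paper: both arguments first use the distance-preserving hypothesis solely to ensure $B_j\in R$, and then reduce to Lemma~\ref{lemma:translation_1} by expressing the $\Phi$-variables in terms of the underlying $Q$-variables. The paper phrases this reduction via the decomposition $P(\Phi_A=a)=P(Q_r=1,Q_q=a)+P(Q_r=-1,Q_q=-a)$, while you phrase it as applying a fixed deterministic product map to the longer $Q$-vector of length $m+2n$; these are two wordings of the same idea, and your formulation is in fact the more complete of the two.
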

\begin{proof}
Since $f$ preserves distances, we have $\rho(f(r_j),f(q_j))=1$ for any $j=1,\ldots,n$. The lemma follows from Lemma \ref{lemma:translation_1} because for any $A=\{r,q\}$ and $a\in \{-1,1\}$ we can write: 
\[P(\Phi_{A}=a)=P(Q_r\cdot Q_q=a)
=P(Q_r=1,Q_q=a)+P(Q_r=-1,Q_q=-a).\]
\end{proof}

\begin{lemma}
Denote $L=4\nu+1$ and  $C_3=3Le^{L^2+1}$. For any sequence $b=(t_1,\ldots,t_m)$ of points in $\mathbb{Z}^\nu$ and any family $\gamma\in R^*$: 
\[|\langle Q^,_b,\Phi^\backprime_\gamma\rangle_0|\leqslant 
(C_3)^{m+|\gamma|}\;m!\;\gamma!\]
\label{lemma:estimates}
\end{lemma}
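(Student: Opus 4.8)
The plan is to recognize $\langle Q^,_b,\Phi^\backprime_\gamma\rangle_0$ as a special instance of the semi-invariant estimated in Lemma \ref{lemma:estimates_1}, by treating each point $t_i$ as the singleton $\{t_i\}$ and each neighbouring pair $A_j$ as a two-element subset of $\mathbb{Z}^\nu$. Concretely, I would take $\mathfrak{A}$ to be the collection of all singletons $\{t\}$, $t\in\mathbb{Z}^\nu$, together with all pairs $A\in R$, equipped with the linking relation ``$A\cap B\neq\varnothing$'' of subsection 4.2. Since $\prod_{t\in\{t_i\}}Q_t=Q_{t_i}$ and $\prod_{t\in A_j}Q_t=\Phi_{A_j}$, and since semi-invariants are symmetric (Lemma \ref{lemma:sem}.1), the quantity $\langle Q^,_b,\Phi^\backprime_\gamma\rangle_0$ is exactly the left-hand side of Lemma \ref{lemma:estimates_1} for the family $\alpha$ to which the combined sequence $(\{t_1\},\ldots,\{t_m\},A_1,\ldots,A_n)$ reduces, where $\widehat{\gamma}=(A_1,\ldots,A_n)$.

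First I would verify that this $\mathfrak{A}$ satisfies the linking hypothesis with $L=4\nu+1$. A singleton $\{t\}$ is linked only to itself among singletons and to the $2\nu$ pairs $\{t,u\}$ with $u$ a neighbour of $t$, giving at most $1+2\nu$ links. A pair $\{r,s\}\in R$ is linked to the two singletons $\{r\},\{s\}$ and to the pairs meeting $\{r,s\}$; there are at most $2\nu$ pairs through $r$ and $2\nu$ through $s$, with $\{r,s\}$ counted in both lists, so at most $4\nu-1$ such pairs, hence at most $2+(4\nu-1)=4\nu+1$ links. Since $4\nu+1\geqslant 1+2\nu$ for $\nu\geqslant 1$, every element of $\mathfrak{A}$ is linked to at most $L=4\nu+1$ elements, so Lemma \ref{lemma:estimates_1} applies with this $L$, whose constant $3Le^{L^2+1}$ is precisely $C_3$.

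Applying Lemma \ref{lemma:estimates_1} would then give $|\langle Q^,_b,\Phi^\backprime_\gamma\rangle_0|\leqslant (C_3)^{|\alpha|}\alpha!$, and it remains only to compare this with $(C_3)^{m+|\gamma|}\,m!\,\gamma!$. The length of $\alpha$ equals the length of its reducing sequence, so $|\alpha|=m+n=m+|\gamma|$ and the exponents agree exactly. For the factorials, the distinct singletons among $\{t_1\},\ldots,\{t_m\}$ carry multiplicities $a_1,\ldots,a_p$ with $\sum_l a_l=m$, while the distinct pairs are exactly the elements of $\gamma$ with their multiplicities, contributing the factor $\gamma!$; because a singleton and a pair are never equal as subsets of $\mathbb{Z}^\nu$, these two groups do not interfere and $\alpha!=(a_1!\cdots a_p!)\,\gamma!$. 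The multinomial coefficient $m!/(a_1!\cdots a_p!)$ is a positive integer, whence $a_1!\cdots a_p!\leqslant m!$ and therefore $\alpha!\leqslant m!\,\gamma!$, yielding the claim.

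The heavy estimates are all delegated to Lemma \ref{lemma:estimates_1}, so the only genuine points of care are the bookkeeping for the linking constant and the factorial comparison. The binding case for $L$ is a pair rather than a singleton, which is what forces $L=4\nu+1$; and the inequality $\alpha!\leqslant m!\,\gamma!$ depends on the clean separation of singleton and pair multiplicities, i.e.\ on the elementary but essential observation that a point-index can never coincide, as a set, with a pair-index. I expect the pair-counting for $L$ to be the step most prone to an off-by-one slip, since the shared pair $\{r,s\}$ must be deducted once.
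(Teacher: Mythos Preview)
Your proposal is correct and follows essentially the same route as the paper: take $\mathfrak{A}=R\cup\{\{t\}:t\in\mathbb{Z}^\nu\}$ with the intersection linking relation, verify $L=4\nu+1$ (the binding case being a pair, with the shared edge $\{r,s\}$ deducted once), form the family $\alpha$ reducing the combined sequence, apply Lemma~\ref{lemma:estimates_1}, and then use $|\alpha|=m+|\gamma|$ together with the multinomial bound $a_1!\cdots a_p!\leqslant m!$ to pass from $\alpha!$ to $m!\,\gamma!$. Your explicit remark that singletons and pairs can never coincide as subsets is exactly the observation the paper uses implicitly when writing $\alpha=\beta\cup\gamma$ and $\alpha!=\beta!\,\gamma!$.
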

\begin{proof}
We define a set $\mathfrak{A}=R\cup \left\lbrace \{t\}\mid t\in\mathbb{Z}^\nu\right\rbrace$. So $\mathfrak{A}$ is a set of finite non-empty subsets of $\mathbb{Z}^\nu$.
Any element of the form $\{t\}$ is linked to itself and to $2\nu$ elements of the form $\{t,r\}$, so the total is $2\nu+1$. Any element\ of the form $\{r,s\}$ is linked to elements $\{r\}$, $\{s\}$, $2\nu$ elements of the form $\{r,t\}$ and $2\nu$ elements of the form $\{s,t\}$, so the total is $4\nu+1$ (because the element $\{r,s\}$ is counted twice). Thus, each element of $\mathfrak{A}$ is linked to at most $L$ elements and we can apply Lemma \ref{lemma:estimates_1}.

Consider a sequence $b=(t_1,t_2,\ldots,t_m)$ of points in $\mathbb{Z}^\nu$ and $\gamma\in R^*$. Let $\overline{t}_1,\overline{t}_2,\ldots,\overline{t}_q$ be the points $t_1,t_2,\ldots,t_m$ written without repetitions. 
\medskip

Denote $\beta=\left\lbrace \left(\{\overline{t}_1\},n_1\right),\ldots,
\left(\{\overline{t}_q\},n_q\right) \right\rbrace$. Then $n_1+n_2+\ldots+n_q=m$ and 
\medskip
\\
$n_1!n_2!\ldots n_q!\leqslant(n_1+n_2+\ldots+n_q)!=m!$. 
Denote $\alpha=\beta\cup\gamma$. 
Then $\widehat{\alpha}$ has the 
\medskip
\\
form $\widehat{\alpha}=\left(\{t_1\},\ldots,\{t_m\},A_1,\ldots A_r\right)$, where $\widehat{\gamma}=(A_1,\ldots A_r)$. 
\medskip

Clearly, $|\alpha|=m+|\gamma|$ and $\alpha!=n_1!n_2!\ldots n_q!\gamma!\leqslant m!\gamma!$.
\medskip

For any $i=1,2,\ldots,r$, $\prod_{t\in A_i}Q_t=\Phi_{A_i}$. So by Lemma \ref{lemma:estimates_1}, 
\begin{multline*}
|\langle Q^,_b,\Phi^\backprime_\gamma\rangle_0|
=\Big|\langle \prod_{t\in \{t_1\}}Q_t,\ldots,\prod_{t\in \{t_m\}}Q_t,\prod_{t\in A_1}Q_t,\ldots,\prod_{t\in A_r}Q_t
\rangle_0\Big|
\leqslant (C_3)^{|\alpha|}\alpha!
\\
\leqslant (C_3)^{m+|\gamma|}\;m!\;\gamma!
\end{multline*}
This completes the proof of the lemma.
\end{proof}

In subsection \ref{subsection:Ising model} we introduced the set of all pairs of neighbouring nodes in the cube $\Lambda_N$:
$R_N=\left\lbrace \{s,t\}\mid s,t\in\Lambda_N \;\&\; \rho(s,t)=1\right\rbrace$.

Denote $R^*_N=\{\gamma\mid\gamma$ is a family of elements of $R_N\}$.
The following lemma describes a connection between the semi-invariants with respect to the measures $P_{\lambda,N} $ and $P_{0,N}$. 
\begin{lemma}
Denote $C_\nu=\dfrac{1}{2C_2C_3}$, where $C_2$ and $C_3$ are the positive constants from Lemmas \ref{lemma:Eulirean} and \ref{lemma:estimates}, respectively, depending only on $\nu$. Suppose $|\lambda|<C_\nu$. Then for any $K\geqslant 1$, sequence $b=(t_1,t_2,\ldots,t_m)$ of points in $\Lambda_{K}$ and $N\geqslant K$: 
\begin{equation}
\langle Q^,_b
\rangle_{\lambda,N}
=\sum_{n=0}^{\infty} \lambda^n
\sum_{\{\gamma\in R^*_N:|\gamma|=n\}} \dfrac{1}{\gamma !}
\langle Q^,_b,\Phi^\backprime_\gamma\rangle_0.
\label{eq:Taylor_series}
\end{equation}
The inner sum can be taken over the families $\gamma$ that connect $b$.

The series in the formula \eqref{eq:Taylor_series} converges absolutely and uniformly for all $N\geqslant K$. The semi-invariants on the right-hand side are with respect to measure $P_{0,N}$ and do not depend on $N$, due to Lemma \ref{lemma:zero_lambda}.3).
\label{lemma:series_N}
\end{lemma}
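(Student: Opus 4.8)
The plan is to realise $\langle Q^,_b\rangle_{\lambda,N}$ as a coefficient of the cumulant generating function of the product measure $P_{0,N}$ and to expand it in powers of $\lambda$. Write $E_{0,N},E_{\lambda,N}$ for expectation with respect to $P_{0,N},P_{\lambda,N}$. Because $-\overline{U}_N(\delta)=\lambda\sum_{A\in R_N}\Phi_A(\delta)$ and $P_{0,N}$ is uniform on $\Omega_N$, the definition of Gibbs measure gives the reweighting identity
\begin{equation*}
E_{\lambda,N}[F]=\frac{E_{0,N}\!\left[F\,e^{\lambda\sum_{A\in R_N}\Phi_A}\right]}{E_{0,N}\!\left[e^{\lambda\sum_{A\in R_N}\Phi_A}\right]}.
\end{equation*}
Attaching sources $h_A$ (one per $A\in R_N$) to the $\Phi_A$ and $g_i$ to the $Q_{t_i}$, set $\Psi(\mathbf h,g)=\ln E_{0,N}\big[\exp(\sum_A h_A\Phi_A+\sum_i g_iQ_{t_i})\big]$, where $\mathbf h=(h_A)_{A\in R_N}$. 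The standard reformulation of the definition of semi-invariant (see \cite{MM91}) identifies mixed derivatives of $\Psi$ at the origin with $P_{0,N}$-semi-invariants, and combined with the reweighting identity it gives
\begin{equation*}
\Theta(\lambda):=\langle Q^,_b\rangle_{\lambda,N}=\frac{\partial^m}{\partial g_1\cdots\partial g_m}\,\Psi(\mathbf h,g)\Big|_{\,h_A=\lambda,\;g=0},
\end{equation*}
the $g$-independent term $\ln E_{0,N}[e^{\lambda\sum_A\Phi_A}]$ being annihilated by the $g$-derivatives since $m\geqslant 1$.

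To find the Taylor coefficients of $\Theta$ at $\lambda=0$, note first that since $\Omega_N$ is finite the expectation inside the logarithm is a finite, strictly positive sum of exponentials; hence $\Psi$ is real-analytic in $(\mathbf h,g)$, $\Theta$ is real-analytic on $\mathbb{R}$, and mixed partials commute. Writing $d/d\lambda=\sum_{A\in R_N}\partial_{h_A}$ by the chain rule and using that a mixed $h,g$-derivative of $\Psi$ at the origin is the corresponding $P_{0,N}$-semi-invariant, I get
\begin{equation*}
\Theta^{(n)}(0)=\sum_{(A_1,\ldots,A_n)\in R_N^{n}}\langle Q_{t_1},\ldots,Q_{t_m},\Phi_{A_1},\ldots,\Phi_{A_n}\rangle_0,
\end{equation*}
a finite sum because $R_N$ is finite. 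Collecting the ordered tuples $(A_1,\ldots,A_n)$ according to the family $\gamma$ they reduce to — there are $n!/\gamma!$ of them for each $\gamma$ with $|\gamma|=n$, all giving the common value $\langle Q^,_b,\Phi^\backprime_\gamma\rangle_0$ by symmetry of semi-invariants (Lemma \ref{lemma:sem}) — yields $\Theta^{(n)}(0)/n!=\sum_{\{\gamma\in R^*_N:\,|\gamma|=n\}}\frac{1}{\gamma!}\langle Q^,_b,\Phi^\backprime_\gamma\rangle_0$. Thus the right-hand side of \eqref{eq:Taylor_series} is exactly the Taylor series of $\Theta$ at the origin.

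For convergence and the uniform estimate I invoke the three earlier lemmas. By Lemma \ref{lemma:capital_phi_2} any $\gamma$ not connecting $b$ contributes $0$, so each coefficient is effectively a sum over connecting families only; their number is at most $(C_2)^n$ by Lemma \ref{lemma:Eulirean}, and each summand is at most $(C_3)^{m+n}m!\,\gamma!$ in absolute value by Lemma \ref{lemma:estimates}. Since the factors $\gamma!$ cancel against $1/\gamma!$,
\begin{equation*}
\left|\sum_{\{\gamma\in R^*_N:\,|\gamma|=n\}}\frac{1}{\gamma!}\langle Q^,_b,\Phi^\backprime_\gamma\rangle_0\right|\leqslant (C_3)^m\,m!\,(C_2C_3)^n.
\end{equation*}
This bound is independent of $N$, as the $P_{0,N}$-semi-invariants do not depend on $N$ (Lemma \ref{lemma:zero_lambda}) and $C_2,C_3$ depend only on $\nu$. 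For $|\lambda|<C_\nu=\frac{1}{2C_2C_3}$ one has $|\lambda|C_2C_3<\frac{1}{2}$, so the series converges geometrically, absolutely and uniformly in $N$, with radius of convergence at least $1/(C_2C_3)=2C_\nu$.

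It remains to turn the coefficientwise match into an equality of functions on $(-C_\nu,C_\nu)$. The power series $P(\lambda)$ on the right of \eqref{eq:Taylor_series} is real-analytic on $(-2C_\nu,2C_\nu)$, and $\Theta$ is real-analytic there with the same Taylor coefficients at $0$; hence $\Theta=P$ on a neighbourhood of $0$, and by the identity theorem for real-analytic functions on the connected interval $(-2C_\nu,2C_\nu)$ they agree throughout it, in particular on $(-C_\nu,C_\nu)$. The restriction of the inner sum to families connecting $b$ then follows at once from Lemma \ref{lemma:capital_phi_2}. I expect this final step — promoting the formal Taylor identity to a genuine equality valid up to the threshold $C_\nu$ while keeping every bound uniform in $N$ — to be the main obstacle, since it is precisely here that the fixed-$N$ analyticity of $\Theta$ must be married to the $N$-independent geometric estimate supplied by Lemmas \ref{lemma:Eulirean} and \ref{lemma:estimates}.
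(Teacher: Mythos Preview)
Your proposal is correct and follows essentially the same route as the paper: derive the Taylor expansion of $\langle Q^,_b\rangle_{\lambda,N}$ in $\lambda$, regroup ordered tuples into families via the $n!/\gamma!$ count, and then combine Lemmas~\ref{lemma:capital_phi_2}, \ref{lemma:Eulirean}, \ref{lemma:estimates} to get the $N$-uniform geometric bound $(C_3)^m m!\,(|\lambda|C_2C_3)^n$. The only substantive difference is presentational: the paper delegates the Taylor expansion step to \cite{MM91} (equation~\eqref{eq:Malyshev_2} with $W=\sum_{A\in R_N}\Phi_A$), whereas you supply it directly from the cumulant generating function and the reweighting identity, and you make explicit the real-analyticity/identity-theorem step needed to promote the formal Taylor identity to an equality on all of $(-C_\nu,C_\nu)$, which the paper leaves implicit in the citation.
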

\begin{proof}
Suppose $|\lambda|<C_\nu$. Fix a sequence $b=(t_1,t_2,\ldots,t_m)$ of points in $\Lambda_{K}$ and $N\geqslant K$. Semi-invariants with respect to Gibbs measure can be expanded in Taylor series; the proof was given in \cite{MM91}, pg. 34. In our case the Taylor series has the form: 
\begin{equation}
\langle Q^,_b\rangle_{\lambda,N}=\langle Q_{t_1},Q_{t_2},\ldots
,Q_{t_m} \rangle_{\lambda,N}
=\sum_{n=0}^{\infty} \frac{\lambda^n}{n!} \langle Q_{t_1},Q_{t_2},\ldots,Q_{t_m},
\underbrace{W,\ldots,W}_{n\text{ times}}\rangle_0,
\label{eq:Malyshev_2}
\end{equation}
where $W=\sum_{A\in R_N}\Phi_A$.
Let us consider the Taylor coefficients:
\begin{multline}
a_{N,n}=\frac{1}{n!} \langle Q_{t_1},Q_{t_2},\ldots,Q_{t_m},
\underbrace{W,\ldots,W}_{n\text{ times}}\rangle_0
\\
=\frac{1}{n!} 
\sum_{A_1\in R_N} 
\sum_{A_2\in R_N}\ldots\sum_{A_n\in R_N}
 \langle Q_{t_1},Q_{t_2},\ldots
,Q_{t_m},
\Phi_{A_1},\Phi_{A_2},\ldots, \Phi_{A_n}\rangle_0. 
\label{eq:Gibm1}
\end{multline}

Then the Taylor series for the semi-invariant has the form:
\begin{equation}
\langle Q^,_b\rangle_{\lambda,N}=\sum_{n=0}^{\infty}\lambda^n a_{N,n}.
\label{eq:Taylor}
\end{equation}

Consider any $A_1, A_2,\ldots,A_n\in R_N$. This sequence can contain repeating elements. Denote the corresponding family $\gamma=\{(\overline{A_1},n_1), \ldots,(\overline{A_q},n_q)\}$. There are $\dfrac{n!}{\gamma!}$ ordered sequences $(A_1, A_2,\ldots,A_n)$ that reduce to the same family $\gamma$ of length $n$. Therefore \eqref{eq:Gibm1} can be written as:
\[a_{N,n}
=\frac{1}{n!}\sum_{\{\gamma\in R^*_N:|\gamma|=n\}} \dfrac{n!}{\gamma !}\langle Q^,_b,\Phi^\backprime_\gamma\rangle_0
=\sum_{\{\gamma\in R^*_N:|\gamma|=n\}} \dfrac{1}{\gamma !}
\langle Q^,_b,\Phi^\backprime_\gamma\rangle_0.\]
In this sum we can take only the families $\gamma$ that connect $b$ because for others the corresponding addends equal 0 by Lemma \ref{lemma:capital_phi_2}. By Lemma \ref{lemma:Eulirean}, the number of such families $\gamma$ is not greater than $(C_2)^n$.

By Lemma \ref{lemma:estimates}, for each $\gamma$: $|\langle Q^,_b,\Phi^\backprime_\gamma\rangle_0|\leqslant 
(C_3)^{m+|\gamma|}\;m!\;\gamma!$ If $|\gamma|=n$, we have: 
\begin{multline*}
|\lambda^n a_{N,n}|\leqslant 
|\lambda|^n \sum_{\{\gamma\in R^*_N:|\gamma|=n\}} \dfrac{1}{\gamma !}\Big|\langle Q^,_b,\Phi^\backprime_\gamma\rangle_0\Big|
\leqslant |\lambda|^n(C_2)^n(C_3)^{m+n}\;m!
\\
=(C_3)^{m}\;m!\;|\lambda C_2C_3|^n.
\end{multline*}

Since $|\lambda|<C_\nu$, then each $|\lambda^n a_{N,n}|\leqslant (C_3)^{m}\;m!\;2^{-n}$. So the series \eqref{eq:Taylor} converges absolutely and uniformly for all $N\geqslant K$. This completes the proof of the lemma.
\end{proof} 

\begin{lemma}
Suppose $|\lambda|<C_\nu$, where $C_\nu$ is the constant from Lemma \ref{lemma:series_N}. Then for any sequence $b$ of points in $\mathbb{Z}^\nu$ the following limit exists and
\[\lim_{N\rightarrow\infty}\langle Q^,_b\rangle_{\lambda,N}
=\sum_{n=0}^{\infty} \lambda^n
\sum_{\{\gamma\in R^*:|\gamma|=n\}} \dfrac{1}{\gamma !}
\langle Q^,_b,\Phi^\backprime_\gamma\rangle_0.\]

On the right-hand side the series converges and the inner sum can be taken over the families $\gamma$ that connect $b$.
\label{lemma:semi_limit}
\end{lemma}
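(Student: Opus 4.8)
The plan is to pass to the limit $N\to\infty$ in the series representation furnished by Lemma \ref{lemma:series_N}, exchanging the limit with the infinite summation by means of the uniform bound on the Taylor coefficients. Since $b$ is a finite sequence, all its points lie in some cube $\Lambda_K$, so Lemma \ref{lemma:series_N} applies for every $N\geqslant K$ and expresses
\[
\langle Q^,_b\rangle_{\lambda,N}=\sum_{n=0}^{\infty}\lambda^n a_{N,n},\qquad a_{N,n}=\sum_{\{\gamma\in R^*_N:|\gamma|=n,\ \gamma\text{ connects }b\}}\frac{1}{\gamma!}\langle Q^,_b,\Phi^\backprime_\gamma\rangle_0,
\]
where moreover each term satisfies the estimate $|\lambda^n a_{N,n}|\leqslant (C_3)^m\,m!\,2^{-n}$ uniformly in $N\geqslant K$.

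The decisive step is a localization observation showing that each $a_{N,n}$ is eventually constant in $N$. If a family $\gamma$ of length $n$ connects $b$, its associated graph is connected with $n$ edges, so every vertex lies within graph-distance $n$ of some point $t_i$; since each edge of $R$ joins lattice-neighbours and the $t_i$ lie in $\Lambda_K$, all vertices lie in $\Lambda_{K+n}$ and hence $\gamma\in R^*_{K+n}$. Because $R^*_N\subseteq R^*$ for all $N$, the connecting families of length $n$ in $R^*_N$ therefore coincide with those in $R^*$ as soon as $N\geqslant K+n$. Consequently $\lim_{N\to\infty}a_{N,n}$ exists and equals the inner sum $S_n$ taken over connecting families in $R^*$.

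It then remains to interchange limit and sum. Since the dominating bound $(C_3)^m\,m!\,2^{-n}$ is summable and independent of $N$, I would invoke dominated convergence for the counting measure on $\mathbb{N}$ to obtain
\[
\lim_{N\to\infty}\langle Q^,_b\rangle_{\lambda,N}=\sum_{n=0}^{\infty}\lambda^n\lim_{N\to\infty}a_{N,n}=\sum_{n=0}^{\infty}\lambda^n S_n,
\]
which is the claimed identity; the inner sum may be extended to all of $R^*$ because non-connecting families contribute $0$ by Lemma \ref{lemma:capital_phi_2}. Applying Lemmas \ref{lemma:Eulirean} and \ref{lemma:estimates} directly to $R^*$ (the number of connecting families of length $n$ is still at most $(C_2)^n$) yields the same bound $|\lambda^n S_n|\leqslant (C_3)^m\,m!\,2^{-n}$, using $|\lambda C_2 C_3|<\tfrac12$, so the right-hand series converges absolutely.

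I expect the only genuine subtlety to be confirming that the dominating bound inherited from Lemma \ref{lemma:series_N} is truly uniform in $N$ --- which it is, since $C_2$, $C_3$ and the factor $(C_3)^m m!$ depend only on $\nu$, $\lambda$ and $m$ and not on $N$ --- so that the termwise passage to the limit is legitimate. The localization fact making each $a_{N,n}$ eventually constant is the clean geometric heart of the argument, and everything else is routine bookkeeping with the absolutely convergent series.
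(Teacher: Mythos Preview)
Your proof is correct and follows essentially the same route as the paper: invoke Lemma~\ref{lemma:series_N} to get the series for $\langle Q^,_b\rangle_{\lambda,N}$, then interchange $\lim_N$ with $\sum_n$ using the uniform bound $|\lambda^n a_{N,n}|\leqslant (C_3)^m m!\,2^{-n}$. Your explicit localization argument (that each $a_{N,n}$ stabilizes once $N\geqslant K+n$) spells out a step the paper leaves implicit, and your phrasing via dominated convergence for counting measure is equivalent to the paper's appeal to uniform convergence.
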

\begin{proof}
Suppose $|\lambda|<C_\nu$. Let us fix a sequence $b=(t_1,\ldots,t_m)$ of points in $\mathbb{Z}^\nu$. There is a  sufficiently big $K$ such that $t_1,\ldots,t_m\in\Lambda_{K}$. By Lemma \ref{lemma:series_N} for any $N\geqslant K$: 
\[\langle Q^,_b
\rangle_{\lambda,N}
=\sum_{n=0}^{\infty} \lambda^n
\sum_{\{\gamma\in R^*_N:|\gamma|=n\}} \dfrac{1}{\gamma !}
\langle Q^,_b,\Phi^\backprime_\gamma\rangle_0\]
and the series conversges uniformly for all $N\geqslant K$. Therefore
\begin{multline*}
\lim_{N\rightarrow\infty}\langle Q^,_b\rangle_{\lambda,N}
=\sum_{n=0}^{\infty} \lambda^n
\lim_{N\rightarrow\infty}
\left(\sum_{\{\gamma\in R^*_N:|\gamma|=n\}} \dfrac{1}{\gamma !}
\langle Q^,_b,\Phi^\backprime_\gamma\rangle_0\right)
\\
=\sum_{n=0}^{\infty} \lambda^n
\sum_{\{\gamma\in R^*:|\gamma|=n\}} \dfrac{1}{\gamma !}
\langle Q^,_b,\Phi^\backprime_\gamma\rangle_0\text{ and the series converges}.
\end{multline*}
\end{proof}
 
\begin{center}
\textbf{Proof of Theorem \ref{theorem:therm_limit}}
\end{center}

The constant $C_\nu$ was defined in Lemma \ref{lemma:series_N}.
Part 1 follows from Lemma \ref{lemma:semi_limit}.
Part 2 follows from part 1 and Lemma \ref{lemma:sem}.3).

Theorem \ref{theorem:limits} follows from Theorem \ref{theorem:therm_limit} as shown in Section 2.

\begin{corollary}
Suppose $|\lambda|<C_\nu$, where $C_\nu$ is the constant from Lemma \ref{lemma:series_N}. Then the following hold. 

1. For any sequence $b=(t_1,t_2,\ldots,t_m)$ of points in $\mathbb{Z}^\nu$:
\[\langle Q^,_b
\rangle_{\lambda}
=\sum_{n=0}^{\infty} \lambda^n
\sum_{\{\gamma\in R^*:|\gamma|=n\}} \dfrac{1}{\gamma !}
\langle Q^,_b,\Phi^\backprime_\gamma\rangle_0.\]
On the right-hand side the series converges and the inner sum can be taken over the families $\gamma$ that connect $b$.
\medskip

2. For any $t_1,t_2,\ldots,t_m\in\mathbb{Z}^\nu$: $\quad
\langle Q_{t_1},Q_{t_2},\ldots
,Q_{t_m}\rangle_{\lambda}=$
\[=\sum_{n=0}^{\infty} \frac{\lambda^n}{n!}\sum_{A_1\in R}\sum_{A_2\in R}\ldots\sum_{A_n\in R}\langle Q_{t_1},Q_{t_2},\ldots,Q_{t_m},\Phi_{A_1},\Phi_{A_2},\ldots, \Phi_{A_n}\rangle_0.\]
The series on the right-hand side converges.
\medskip

3. For any $t\in\mathbb{Z}^\nu$, 
$\langle Q_t\rangle_\lambda=0$ and $\langle Q_t,Q_t\rangle_\lambda=1$.

4. Suppose $f:\mathbb{Z}^\nu\rightarrow\mathbb{Z}^\nu$ is a bijection preserving distances.  Then for any 
\\$t_1,\ldots,t_m\in\mathbb{Z}^\nu$:
\[\langle Q_{t_1},\ldots,Q_{t_m}\rangle_\lambda=\langle Q_{f(t_1}),\ldots,Q_{f(t_m)}\rangle_\lambda.\]

5. Fix $a\in \mathbb{Z}^\nu$ and  define $g:\mathbb{Z}^\nu\rightarrow\mathbb{Z}^\nu$ by the following: $g(t)=t-a.$ 

Suppose a renormalization group with parameters $k$ and  $\alpha$  transforms the random field $\{Q_t\mid t\in\mathbb{Z}^\nu\}$ into a random field $\lbrace   Y^{(k)}_\tau \mid\tau\in Z^\nu \rbrace$. 
Then for any $\tau_1,\ldots,\tau_m\in\mathbb{Z}^\nu$:
\[\langle Y^{(k)}_{g(\tau_1)},\ldots,Y^{(k)}_{g(\tau_m)}\rangle_\lambda=\langle Y^{(k)}_{\tau_1},\ldots,Y^{(k)}_{\tau_m}\rangle_\lambda.\]

\label{corollary}
\end{corollary}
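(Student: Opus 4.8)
The plan is to read every assertion off the series representation of part 1, so I would establish parts 1 and 2 first. Part 1 is immediate: Theorem \ref{theorem:limits}.2) identifies $\lim_{N\to\infty}\langle Q^,_b\rangle_{\lambda,N}$ with $\langle Q^,_b\rangle_\lambda$, while Lemma \ref{lemma:semi_limit} evaluates that same limit as the stated series, so the two coincide. Part 2 is part 1 unfolded from families to ordered sequences: exactly as in the passage from \eqref{eq:Gibm1} to \eqref{eq:Taylor} inside Lemma \ref{lemma:series_N}, each family $\gamma$ with $|\gamma|=n$ is the reduction of $n!/\gamma!$ sequences $(A_1,\ldots,A_n)$, so $\frac{1}{n!}\sum_{A_1,\ldots,A_n\in R}\langle Q^,_b,\Phi_{A_1},\ldots,\Phi_{A_n}\rangle_0=\sum_{|\gamma|=n}\frac{1}{\gamma!}\langle Q^,_b,\Phi^\backprime_\gamma\rangle_0$; summing on $n$ and inheriting convergence from part 1 gives part 2 (for fixed $n$ the ordered sum is effectively finite, since non-connecting sequences vanish by Lemma \ref{lemma:capital_phi_2}).

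For part 3 I would take $b=(t)$ in the series of part 1. Its $n=0$ term is $\langle Q_t\rangle_0=0$ by Lemma \ref{lemma:zero_lambda}.4), and each term with $n\geqslant 1$ is a sum of contributions $\frac{1}{\gamma!}\langle Q_t,\Phi^\backprime_\gamma\rangle_0$, every one of which vanishes by Lemma \ref{lemma:capital_phi_1}; hence $\langle Q_t\rangle_\lambda=0$. For the variance I would bypass the series: a second-order semi-invariant is a covariance, so $\langle Q_t,Q_t\rangle_\lambda=E_\lambda(Q_t^2)-(E_\lambda(Q_t))^2$. Since $Q_t$ takes values in $\{-1,1\}$ we have $Q_t^2\equiv 1$, giving $E_\lambda(Q_t^2)=1$, while $E_\lambda(Q_t)=\langle Q_t\rangle_\lambda=0$ by the first assertion; thus $\langle Q_t,Q_t\rangle_\lambda=1$.

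Part 4 is the heart of the corollary. A distance-preserving bijection $f$ maps $R$ onto $R$ (it sends a pair at distance $1$ to a pair at distance $1$), hence induces a bijection $\gamma\mapsto f_*(\gamma)$ of $R^*$ onto itself obtained by applying $f$ inside every set of $\gamma$; this induced map preserves the length $|\gamma|$, the factor $\gamma!$, and the connectivity of the associated graph (as $f$ is a graph isomorphism carrying the associated graph of $b$ onto that of $f(b)$). Lemma \ref{lemma:translation_2} then yields $\langle Q^,_b,\Phi^\backprime_\gamma\rangle_0=\langle Q^,_{f(b)},\Phi^\backprime_{f_*(\gamma)}\rangle_0$, since the two index families determine random vectors with the same $P_{0,N}$-distribution. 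Using $\gamma'=f_*(\gamma)$ as the new summation variable in the series of part 1 therefore matches it term by term with the series for $f(b)$, and part 4 follows.

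Finally I would reduce part 5 to part 4 by lifting the translation. Because $G_k^{-1}(\tau)$ is the cube $\{t:k\tau_i\leqslant t_i\leqslant k\tau_i+k-1\}$, the translation $f_a(t)=t-ka$ carries $G_k^{-1}(\tau)$ bijectively onto $G_k^{-1}(\tau-a)=G_k^{-1}(g(\tau))$ and preserves $\rho$, so part 4 applies to $f_a$. Writing $Y^{(k)}_{g(\tau_j)}=k^{-\alpha/2}\sum_{t\in G_k^{-1}(\tau_j)}Q_{f_a(t)}$ and expanding by multilinearity (Lemma \ref{lemma:sem}.1)) turns $\langle Y^{(k)}_{g(\tau_1)},\ldots,Y^{(k)}_{g(\tau_m)}\rangle_\lambda$ into a normalized sum of terms $\langle Q_{f_a(t_1)},\ldots,Q_{f_a(t_m)}\rangle_\lambda$; replacing each by $\langle Q_{t_1},\ldots,Q_{t_m}\rangle_\lambda$ via part 4 reproduces $\langle Y^{(k)}_{\tau_1},\ldots,Y^{(k)}_{\tau_m}\rangle_\lambda$. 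The main obstacle I anticipate is precisely this lifting in part 5 — recognizing that a unit shift $g(\tau)=\tau-a$ on the coarse lattice must be lifted to the shift by $ka$ (not by $a$) on $\mathbb{Z}^\nu$ and checking that this lift permutes the preimage cubes; the secondary care-point is verifying in part 4 that $f_*$ is a genuine bijection of families preserving $\gamma!$ and connectivity, so that Lemma \ref{lemma:translation_2} can be invoked termwise.
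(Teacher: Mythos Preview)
Your proposal is correct and follows essentially the same route as the paper: part 1 from Theorem \ref{theorem:limits}.2) plus Lemma \ref{lemma:semi_limit}; part 2 by undoing the family-versus-sequence rewriting of Lemma \ref{lemma:series_N}; part 3 from the series plus Lemma \ref{lemma:capital_phi_1} and $Q_t^2\equiv 1$; part 4 via Lemma \ref{lemma:translation_2} and a bijective change of summation variable; part 5 by lifting $g$ to $f(t)=t-ka$, identifying $G_k^{-1}(g(\tau_i))=\{f(t):t\in G_k^{-1}(\tau_i)\}$, expanding by multilinearity, and invoking part 4. The only cosmetic difference is that in parts 3 and 4 the paper works with the ordered-sequence series of part 2 rather than the family series of part 1, so it only needs $F$ to be a bijection of $R$ rather than your induced bijection $f_*$ on $R^*$ preserving $\gamma!$; your version is equally valid, and you correctly flagged that extra bookkeeping as a care-point.
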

\begin{proof}
1. It follows from Theorem \ref{theorem:limits}.2) and 
Lemma \ref{lemma:semi_limit}.

2. It is proven by re-arranging the sum in part 1, similarly to the proof of Lemma \ref{lemma:series_N}.

3. By part 2: \[\quad
\langle Q_t\rangle_{\lambda}
=\sum_{n=0}^{\infty} \frac{\lambda^n}{n!} 
\left\lbrace \sum_{A_1\in R} 
\sum_{A_2\in R}\ldots\sum_{A_n\in R}\langle Q_t,
\Phi_{A_1},\Phi_{A_2},\ldots, \Phi_{A_n}\rangle_0 \right\rbrace=0,\]
since each addend equals 0 by Lemma \ref{lemma:capital_phi_1}.
\smallskip

Since $Q_t^2=1$, then $\langle Q_t,Q_t\rangle_\lambda=\langle Q_t^2\rangle_\lambda-(\langle Q_t\rangle_\lambda)^2=1-0=1$.
\smallskip

4. Define a transformation $F$ by: $F(\{r,q\})=\{f(r),f(q)\}$.  Denote $s_i=f(t_i)$, $i=1,\ldots,m$. By part 2 and Lemma \ref{lemma:translation_2}, 
\begin{multline*}
\langle Q_{t_1},\ldots,Q_{t_m}\rangle_\lambda
=\sum_{n=0}^{\infty} \frac{\lambda^n}{n!} 
\left\lbrace \sum_{A_1\in R} 
\ldots\sum_{A_n\in R}\langle Q_{t_1},\ldots,Q_{t_m},
\Phi_{A_1},\ldots, \Phi_{A_n}\rangle_0 \right\rbrace
\\
=\sum_{n=0}^{\infty} \frac{\lambda^n}{n!} 
\left\lbrace \sum_{A_1\in R} 
\ldots\sum_{A_n\in R}\langle Q_{s_1},\ldots,Q_{s_m},
\Phi_{F(A_1)},\ldots, \Phi_{F(A_n)}\rangle_0 \right\rbrace
\\
=\sum_{n=0}^{\infty} \frac{\lambda^n}{n!} 
\left\lbrace \sum_{B_1\in R} 
\ldots\sum_{B_n\in R} \langle Q_{s_1},\ldots,Q_{s_m},
\Phi_{B_1},\ldots, \Phi_{B_n}\rangle_0 \right\rbrace
\\
=\langle Q_{s_1},\ldots,Q_{s_m}\rangle_\lambda
\end{multline*}
because $F$ is a bijection on $R$.
\medskip

5. Fix $\tau_1,\ldots,\tau_m\in\mathbb{Z}^\nu$. Define $f:\mathbb{Z}^\nu\rightarrow\mathbb{Z}^\nu$ by the following: $f(t)=t-ka.$ Then $f$ is a bijection preserving distances. Clearly, for any $i=1,\ldots,m$:
\begin{equation}
G_k^{-1}(g(\tau_i))=\left\lbrace f(t)\mid t\in G_k^{-1}(\tau_i)\right\rbrace.
\label{eq:cubes}
\end{equation}

By the definition of renormalization-group, we have:
\begin{multline*}
\langle Y^{(k)}_{g(\tau_1)},\ldots,Y^{(k)}_{g(\tau_m)}\rangle_\lambda 
\\
=(k^{-\alpha/2})^m\sum_{s_1 \in G_k^{-1}(g(\tau_1))}\ldots\sum_{s_m \in G_k^{-1}(g(\tau_m))}\langle Q_{s_1},\ldots,Q_{s_m}\rangle_\lambda
\\
=[\text{by \eqref{eq:cubes}}]
=k^{-\alpha m/2}\sum_{t_1 \in G_k^{-1}(\tau_1)}\ldots\sum_{t_m \in G_k^{-1}(\tau_m)}\langle Q_{f(t_1)},\ldots,Q_{f(t_m)}\rangle_\lambda
\\
=[\text{by part 4}]
=k^{-\alpha m/2}\sum_{t_1 \in G_k^{-1}(\tau_1)}\ldots\sum_{t_m \in G_k^{-1}(\tau_m)}\langle Q_{t_1},\ldots,Q_{t_m}\rangle_\lambda
\\
=\langle Y^{(k)}_{\tau_1},\ldots,Y^{(k)}_{\tau_m}\rangle_\lambda.
\end{multline*}
\end{proof}

\subsection{Finding the limiting variances}

\begin{lemma}
For any $0<x<\dfrac{1}{2}$ the following hold.

1. For any $l=0,1,2,\ldots$,
\[\text{the series }\sum_{n=l+1}^\infty(n-l)x^n\text{ converges and }
\sum_{n=l+1}^\infty(n-l)x^n=\dfrac{x^{l+1}}{(1-x)^2}.\]

2. For any $m=1, 2,\ldots$,
\[\text{the series }\sum_{n=0}^\infty(n+1)^{m-1}x^n \text{  converges and }
\sum_{n=0}^\infty(n+1)^{m-1}x^n\leqslant\dfrac{m!}{(1-x)^{m+1}}.\]
\label{lemma:number_series}
\end{lemma}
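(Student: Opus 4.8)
The plan is to handle the two parts separately; both reduce to closed forms of the geometric series and its derivatives, and the only mild subtlety lies in a combinatorial rewriting needed for Part 2. Throughout, the hypothesis $0<x<\tfrac12$ is used only through the weaker fact $0<x<1$, which guarantees convergence and legitimizes term-by-term differentiation inside the radius of convergence.

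For Part 1, I would substitute $j=n-l$ so that the series becomes $x^l\sum_{j=1}^\infty j\,x^j$. The inner series is the classical one obtained by differentiating $\sum_{j=0}^\infty x^j=(1-x)^{-1}$ term by term, which is valid for $|x|<1$ and gives $\sum_{j=1}^\infty j\,x^{j-1}=(1-x)^{-2}$, hence $\sum_{j=1}^\infty j\,x^j=x(1-x)^{-2}$. Multiplying by $x^l$ yields the stated value $x^{l+1}(1-x)^{-2}$, and convergence is immediate.

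For Part 2, the key idea is to dominate $(n+1)^{m-1}$ by the rising factorial $(n+1)(n+2)\cdots(n+m)$ and then invoke the generalized binomial series. First I would record the elementary inequality $(n+1)^{m-1}\leqslant(n+1)(n+2)\cdots(n+m)$, which holds because the right side is a product of $m$ factors each at least $n+1\geqslant 1$, so it dominates $(n+1)^m\geqslant(n+1)^{m-1}$. Next I would use the identity $(n+1)(n+2)\cdots(n+m)=m!\binom{n+m}{m}$ together with the standard expansion $\sum_{n=0}^\infty\binom{n+m}{m}x^n=(1-x)^{-(m+1)}$, itself obtainable by differentiating the geometric series $m$ times. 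Combining these,
\[
\sum_{n=0}^\infty (n+1)^{m-1}x^n\leqslant m!\sum_{n=0}^\infty\binom{n+m}{m}x^n=\frac{m!}{(1-x)^{m+1}},
\]
which is exactly the desired bound, with absolute convergence holding throughout since $0<x<1$.

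Both steps are routine; if anything counts as the main obstacle, it is merely the bookkeeping in Part 2 of selecting the correct closed form, namely recognizing that the factor $m!$ produced by rewriting the rising factorial as $m!\binom{n+m}{m}$ is precisely what turns the dominating series $(1-x)^{-(m+1)}$ into the claimed bound $m!(1-x)^{-(m+1)}$. No machinery from the earlier sections is required: this lemma is a self-contained analytic estimate whose role is later to control the summation of the semi-invariant series when locating the limiting variances.
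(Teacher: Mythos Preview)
Your proof is correct and follows essentially the same route as the paper's: Part 1 is the same index shift plus termwise differentiation of the geometric series, and Part 2 uses the identical inequality $(n+1)^{m-1}\leqslant(n+1)(n+2)\cdots(n+m)$ followed by summing the rising factorial. The only cosmetic difference is that the paper writes $(n+1)\cdots(n+m)x^n=(x^{n+m})^{(m)}$ and differentiates $\sum x^i$ directly, whereas you rewrite it as $m!\binom{n+m}{m}$ and quote the negative binomial expansion; these are the same computation in different notation.
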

\begin{proof}
\begin{multline*}
1.\;\sum_{n=l+1}^\infty(n-l)x^n=
[\text{substitution }k=n-l-1]=x^{l+1}\sum_{k=0}^\infty(k+1)x^k
\\
=x^{l+1}\sum_{k=0}^\infty(x^{k+1})'=x^{l+1}\left( \sum_{k=0}^\infty x^{k+1}+1\right)' 
=x^{l+1}\left(\dfrac{1}{1-x}\right)'=x^{l+1}\dfrac{1}{(1-x)^2}.
\end{multline*}

2. For each $n$, $(n+1)^{m-1}x^n\leqslant(n+1)^{m}x^n\leqslant(n+1)(n+2)\ldots(n+m)x^n=
\\=\left( x^{n+m}\right)^{(m)}.$ Since each of the series $\sum_{n=0}^\infty x^n$ and $\sum_{i=0}^\infty \left( x^{i}\right)^{(m)}$ absolutely and uniformly converges on $\left[0,\dfrac{1}{2}\right]$, then
\[\sum_{n=0}^\infty \left(x^{n+m}\right)^{(m)}=\sum_{i=0}^\infty\left(x^{i}\right)^{(m)}=\left(\sum_{i=0}^\infty x^{i}\right)^{(m)} =\left(\dfrac{1}{1-x}\right)^{(m)},\]
because for $i<m$, $\left(x^{i}\right)^{(m)}=0$. It is easily proven by induction on $m$ that:
\[\left(\dfrac{1}{1-x}\right)^{(m)}=\dfrac{m!}{(1-x)^{m+1}}.\]

So the series $\sum_{n=0}^\infty(n+1)^{m-1}x^n$ converges and 
\[\sum_{n=0}^\infty(n+1)^{m-1}x^n\leqslant\dfrac{m!}{(1-x)^{m+1}}.\]
\end{proof}

In the following theorem we derive explicit expressions for the limiting variances of $Y^{(k)}_{\tau}$. This theorem is interesting by itself and also becomes a part of the direct proof of the Main Theorem in subsection \ref{proof_main}. 

\begin{theorem}
Denote $C=\min\left\lbrace \dfrac{1}{2C_2C_3},\dfrac{1}{8C_2(C_3)^3}
\right\rbrace$, where $C_2$ and $C_3$ are the positive constants from Lemmas \ref{lemma:Eulirean} and \ref{lemma:estimates}, respectively, depending only on $\nu$. 

Consider the random field   $\lbrace   Y^{(k)}_\tau \mid\tau\in \mathbb{Z}^\nu \rbrace$ from the Main Theorem (Theorem \ref{theorem:main}) and $\alpha=\nu$. Then for any $|\lambda|<C$ and $\tau\in\mathbb{Z}^\nu$: 
\[\lim_{k\rightarrow\infty}\langle Y^{(k)}_{\tau},Y^{(k)}_{\tau}\rangle_\lambda=1+\sum_{n=1}^{\infty}
\lambda^nV_n,\]
where the series on the right-hand side converges and
\[\text{each }V_n=\sum_{\substack{\gamma\in R^*,
\\
|\gamma|=n,
\\
\gamma\text{ connects }(\bar{0})
}} 
\sum_{\substack{t\in \tilde{\gamma},
\\
\gamma\text{ connects }(\bar{0},t),
\\
t\neq \bar{0}
}}
\dfrac{1}{\gamma!}
\langle Q_{\bar{0}},Q_t,\Phi^\backprime_\gamma\rangle_0.\]

The limiting variances are positive and do not depend on $\tau$.
\label{theorem:variance}
\end{theorem}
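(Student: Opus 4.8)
The plan is to reduce the variance of $Y^{(k)}_\tau$ to a weighted lattice sum of the pair covariances $\langle Q_s,Q_t\rangle_\lambda$, pass to the limit by dominated convergence, and then reorganise that sum into the stated series over families $\gamma$. First, by multilinearity of semi-invariants (Lemma \ref{lemma:sem}.1) and the definition of the renormalization group with $\alpha=\nu$,
\[\langle Y^{(k)}_\tau,Y^{(k)}_\tau\rangle_\lambda=k^{-\nu}\sum_{s\in G_k^{-1}(\tau)}\sum_{t\in G_k^{-1}(\tau)}\langle Q_s,Q_t\rangle_\lambda.\]
By Corollary \ref{corollary}.5) the left-hand side does not depend on $\tau$, so I may take $\tau=\bar 0$, for which $G_k^{-1}(\bar 0)$ is a cube of $k^\nu$ lattice points. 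Since translations preserve distances, Corollary \ref{corollary}.4) shows $\langle Q_s,Q_t\rangle_\lambda$ depends only on $t-s$; writing $c(u)=\langle Q_{\bar 0},Q_u\rangle_\lambda$ and counting the pairs $(s,t)$ in the cube with $t-s=u$, the double sum becomes
\[\langle Y^{(k)}_{\bar 0},Y^{(k)}_{\bar 0}\rangle_\lambda=\sum_{u\in\mathbb{Z}^\nu}\Big(\prod_{i=1}^\nu \frac{1}{k}\max\{0,k-|u_i|\}\Big)c(u).\]
Each weight lies in $[0,1]$ and tends to $1$ as $k\to\infty$ for fixed $u$, so everything reduces to a term-by-term passage to the limit.

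The technical heart is the absolute summability $\sum_{u}|c(u)|<\infty$, which simultaneously supplies the dominating function for this limit and legitimises the later rearrangement. I would obtain it from the series expansion of Corollary \ref{corollary}.1) applied to $b=(\bar 0,u)$, bounding $|\langle Q_{\bar 0},Q_u,\Phi^\backprime_\gamma\rangle_0|\leqslant (C_3)^{2+|\gamma|}\,2!\,\gamma!$ by Lemma \ref{lemma:estimates} and the number of families of length $n$ connecting $(\bar 0)$ by $(C_2)^n$ via Lemma \ref{lemma:Eulirean}. Performing the sum over $u$ inside the sum over $\gamma$ (each connected $\gamma$ admits at most $|\tilde\gamma|\leqslant 2n$ choices of $u$) yields $\sum_{u\neq\bar 0}|c(u)|\leqslant 4(C_3)^2\sum_{n\geqslant 1}n\,(|\lambda|C_2C_3)^n$, which converges for $|\lambda|<C$ by Lemma \ref{lemma:number_series}; together with $c(\bar 0)=1$ (Corollary \ref{corollary}.3) this gives summability, and dominated convergence then produces $\lim_{k\to\infty}\langle Y^{(k)}_\tau,Y^{(k)}_\tau\rangle_\lambda=\sum_{u}c(u)$.

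It remains to identify $\sum_u c(u)$ with the stated series and to verify positivity. Splitting off $c(\bar 0)=1$ and inserting the expansion of Corollary \ref{corollary}.1) into $\sum_{u\neq\bar 0}c(u)$, absolute convergence lets me interchange the sums over $u$, $n$ and $\gamma$. The key combinatorial observation is that for $u\neq\bar 0$ a family $\gamma$ connects $(\bar 0,u)$ if and only if $u\in\tilde\gamma$ and $\gamma$ connects $(\bar 0)$, since otherwise $u$ would be an isolated vertex preventing connectedness; carrying out the sum over $u$ for each fixed $\gamma$ therefore produces exactly $V_n$, so $\sum_u c(u)=1+\sum_{n\geqslant 1}\lambda^n V_n$, with convergence already established. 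Finally, the same estimates give $\sum_{n\geqslant 1}|\lambda|^n|V_n|\leqslant 4(C_3)^2\,x/(1-x)^2$ with $x=|\lambda|C_2C_3$; the second term $\frac{1}{8C_2(C_3)^3}$ in the definition of $C$ forces $x<\frac{1}{8(C_3)^2}$, and since $C_3\geqslant 1$ this makes the bound strictly less than $1$, whence $V=1+\sum_{n\geqslant 1}\lambda^n V_n>0$. Independence of $\tau$ was secured at the outset. I expect the main obstacle to be the bookkeeping in the rearrangement step — keeping the connectedness constraints consistent while interchanging the three sums and matching the index set precisely to the double sum defining $V_n$ — rather than any single estimate.
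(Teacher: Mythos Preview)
Your argument is correct and reaches the same conclusion with the same estimates, but the passage to the limit $k\to\infty$ is organised differently from the paper. You first exploit translation invariance (Corollary \ref{corollary}.4) to collapse the double sum over $s,t$ into a single weighted sum $\sum_u w_k(u)\,c(u)$ with the Fej\'er-type weights $w_k(u)=\prod_i k^{-1}\max\{0,k-|u_i|\}$, prove $\sum_u|c(u)|<\infty$ once and for all from the cluster expansion, and then let dominated convergence do the work; the identification $\sum_u c(u)=1+\sum_n\lambda^nV_n$ and the positivity come afterwards by rearrangement. The paper instead keeps the double sum, expands each $\langle Q_s,Q_t\rangle_\lambda$ into its $\lambda$-series, swaps the order of summation to obtain $1+\sum_n\lambda^n A_{n,k}$ with a uniform-in-$k$ majorant, and computes $\lim_k A_{n,k}=V_n$ by a geometric ``inner cube'' argument: for $s$ at distance $>n$ from the boundary of $G_k^{-1}(\bar 0)$ the truncated inner sum already equals $V_n$, and the boundary layer has vanishing relative volume. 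Your route is a little slicker and makes the decay of the pair covariance the visible mechanism; the paper's route avoids the explicit Fej\'er weight and instead isolates the boundary effect directly, which matches how the analogous $m\geqslant3$ and mixed-$\tau$ cases are later handled in the proof of the Main Theorem. One minor remark: your bound $|\tilde\gamma|\leqslant 2n$ is looser than the paper's $|\tilde\gamma|\leqslant n+1$ (valid because the graph is connected), which costs you a factor of $2$ in the positivity estimate; your compensating observation that $C_3\gg1$ forces $x=|\lambda|C_2C_3<\tfrac{1}{8C_3^2}$ and hence $4C_3^2\,x/(1-x)^2<1$ is correct and closes the argument.
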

\begin{proof}

Suppose $|\lambda|<C$. Then $|\lambda|<C_\nu$, where $C_\nu=\dfrac{1}{2C_2C_3}$ is the constant from Lemma \ref{lemma:series_N}.

Fix $\tau\in \mathbb{Z}^\nu$. Define $g:\mathbb{Z}^\nu\rightarrow\mathbb{Z}^\nu$ by the following: $g(t)=t-\tau.$ Then $g(\tau)=\bar{0}$. By Corollary \ref{corollary}.5), 
\[\langle Y^{(k)}_{\tau},Y^{(k)}_{\tau}\rangle_\lambda=\langle Y^{(k)}_{g(\tau)},Y^{(k)}_{g(\tau)}\rangle_\lambda=\langle Y^{(k)}_{\bar{0}},Y^{(k)}_{
\bar{0}}\rangle_\lambda.\] Therefore it is sufficient to prove the theorem only for $\tau=\bar{0}$.
For $n\geqslant 1,s\in\mathbb{Z}^\nu$ denote: 
\[V(n,s)=\sum_{\substack{\gamma\in R^*,
\\
|\gamma|=n,
\\
\gamma\text{ connects }(s)
}} 
\sum_{\substack{t\in \tilde{\gamma},
\\
\gamma\text{ connects }(s,t),
\\
t\neq s
}}
\dfrac{1}{\gamma!}
\langle Q_s,Q_t,\Phi^\backprime_\gamma\rangle_0.\]

First we prove:
\begin{equation}
\text{for any } s\in \mathbb{Z}^\nu, V(n,s)=V(n,\bar{0})=V_n.
\label{eq:V_independent}
\end{equation}

To prove \eqref{eq:V_independent}, fix $s\in \mathbb{Z}^\nu$. Define $f(t)=t-s$ and 
\medskip
\\
$F(\{r_1,r_2\})=(\{f(r_1),f(r_2)\})$. Then $f(s)=\bar{0}$. 
\medskip

Define $h:R^*\rightarrow R^*$ by the following: 
\\
for $\gamma=\{(A_1,q_1),\ldots,(A_m,q_m)\}$, $h(\gamma)=\{(F(A_1),q_1),\ldots,(F(A_m),q_m)\}$. 

Clearly:
\begin{equation}
h(\gamma)!=\gamma!\;;
\label{eq:factorial}
\end{equation}
\begin{multline}
\{\beta\in R^*: |\beta|=n,\beta\text{ connects }(\bar{0})\}
\\
=\{h(\gamma): \gamma\in R^*,|\gamma|=n,\gamma\text{ connects }(s)\};
\label{eq:set_1}
\end{multline}
\begin{multline}
\{r\in \widetilde{h(\gamma)}: r\neq \bar{0}, h(\gamma) \text{ connects }(\bar{0},r)\}
\\
=\{f(t): t\in \gamma,t\neq s, \gamma\text{ connects }(s,t)\}.
\label{eq:set_2}
\end{multline}

Using \eqref{eq:factorial}-\eqref{eq:set_2} and Lemma \ref{lemma:translation_2}, we get:
\begin{multline*}
V_n=V(n,\bar{0})=\sum_{\substack{\beta\in R^*,
\\
|\beta|=n,
\\
\beta\text{ connects }(\bar{0})
}} 
\sum_{\substack{r\in \tilde{\beta},
\\
\beta\text{ connects }(\bar{0},r),
\\
r\neq \bar{0}
}}
\dfrac{1}{\beta!}
\langle Q_{\bar{0}},Q_r,\Phi^\backprime_\beta\rangle_0
\\
=\sum_{\substack{\gamma\in R^*,
\\
|\gamma|=n,
\\
\gamma\text{ connects }(s)
}} 
\sum_{\substack{r\in \widetilde{h(\gamma)},
\\
h(\gamma)\text{ connects }(\bar{0},r),
\\
r\neq \bar{0}
}}
\dfrac{1}{h(\gamma)!}
\langle Q_{\bar{0}},Q_r,\Phi^\backprime_{h(\gamma)}\rangle_0
\\
=\sum_{\substack{\gamma\in R^*,
\\
|\gamma|=n,
\\
\gamma\text{ connects }(s)
}} 
\sum_{\substack{t\in \tilde{\gamma},
\\
\gamma\text{ connects }(s,t),
\\
t\neq s
}}
\dfrac{1}{\gamma!}
\langle Q_{f(s)},Q_{f(t)},\Phi^\backprime_{h(\gamma)}\rangle_0
\\
=\sum_{\substack{\gamma\in R^*,
\\
|\gamma|=n,
\\
\gamma\text{ connects }(s)
}} 
\sum_{\substack{t\in \tilde{\gamma},
\\
\gamma\text{ connects }(s,t),
\\
t\neq s
}}
\dfrac{1}{\gamma!}
\langle Q_s,Q_t,\Phi^\backprime_\gamma\rangle_0=V(n,s).
\end{multline*}

This completes the proof of \eqref{eq:V_independent}.

If $\gamma$ connects $(s)$ and $|\gamma|=n$, then $\tilde{\gamma}$ has at most $n+1$ points. So the second sum in $V(n,s)$ has at most $n$ addends. Using also Lemma \ref{lemma:Eulirean} for $b=(s)$ and Lemma \ref{lemma:estimates} for $b=(s,t)$, we get:
\begin{multline*}
|V(n,s)|\leqslant\sum_{\substack{\gamma\in R^*,
\\
|\gamma|=n,
\\
\gamma\text{ connects }(s)
}}  
\sum_{\substack{t\in \tilde{\gamma},
\\
\gamma\text{ connects }(s,t),
\\
t\neq s
}}
\dfrac{1}{\gamma !}\Big|
\langle Q_s,Q_t,\Phi^\backprime_\gamma\rangle_0\Big|
\\
\leqslant (C_2)^n\cdot n\cdot\dfrac{1}{\gamma !}(C_3)^{2+n}2!\gamma !\;,
\end{multline*}
\begin{equation}
|V(n,s)|\leqslant 2(C_3)^2 n\left(C_2 C_3\right)^n. 
\label{eq:V_estimate}
\end{equation}

Since $V(n,\bar{0})=V_n$, we have by \eqref{eq:V_estimate}:
\[\Big|\sum_{n=1}^{\infty} \lambda^nV_n\Big|\leqslant
2(C_3)^2\sum_{n=1}^{\infty}n|\lambda C_2C_3|^n
=2(C_3)^2\dfrac{|\lambda C_2C_3|}{\left(1-|\lambda C_2C_3|\right)^2}\]
by Lemma \ref{lemma:number_series}.1) for $l=0$.
Since $|\lambda|<\dfrac{1}{2C_2C_3}$, we have $|\lambda C_2C_3|<\dfrac{1}{2}$ and 
\[\Big|\sum_{n=1}^{\infty} \lambda^nV_n\Big|<
\dfrac{|\lambda|2C_2(C_3)^3}{\left(1-\dfrac{1}{2}\right)^2}
=|\lambda|8C_2(C_3)^3\leqslant 1, \text{ so}\] 
\begin{equation}
\Big|\sum_{n=1}^{\infty} \lambda^nV_n\Big|<1.
\label{eq:series_estimate}
\end{equation}

For $n\geqslant 1,s\in\mathbb{Z}^\nu$ denote 
\[W(n,s,k)=\sum_{\substack{\gamma\in R^*,
\\
|\gamma|=n,
\\
\gamma\text{ connects }(s)}} 
\sum_{\substack{t\in G_k^{-1}(\bar{0}),
\\
\gamma\text{ connects }(s,t),
\\
t\neq s}}
\dfrac{1}{\gamma!}
\langle Q_s,Q_t,\Phi^\backprime_\gamma\rangle_0.\]

If $\gamma$ connects $(s,t)$, then $t\in\tilde{\gamma}$, so similarly to \eqref{eq:V_estimate} we get:
\begin{equation}
|W(n,s,k)|\leqslant 2(C_3)^2 n\left(C_2 C_3\right)^n. 
\label{eq:W_estimate}
\end{equation}

Using the definition of renormalization-group, we get:
\begin{multline*}
\langle Y^{(k)}_{\bar{0}},Y^{(k)}_{\bar{0}}\rangle_\lambda=(k^{-\nu/2})^2  \sum_{s\in G_k^{-1}(\bar{0})}\sum_{t\in G_k^{-1}(\bar{0})}\langle Q_{s},Q_{t}\rangle_\lambda
\\
=k^{-\nu}\sum_{s\in G_k^{-1}(\bar{0})}\langle Q_{s},Q_{s}\rangle_\lambda
+k^{-\nu}
\sum_{\substack{s,t\in G_k^{-1}(\bar{0}),
\\
t\neq s
}}
\langle Q_{s},Q_{t}\rangle_\lambda.
\end{multline*}

For any $s\in\mathbb{Z}^\nu$, $\langle Q_{s},Q_{s}\rangle_\lambda=1$ by Corollary \ref{corollary}.3). There are $k^\nu$ points in $G_k^{-1}(\bar{0})$. So 
\begin{multline*}
\langle Y^{(k)}_{\bar{0}},Y^{(k)}_{\bar{0}}\rangle_\lambda=k^{-\nu}\sum_{s\in G_k^{-1}(0)}1+k^{-\nu}
\sum_{\substack{s,t\in G_k^{-1}(\bar{0}),
\\
t\neq s
}}
\langle Q_{s},Q_{t}\rangle_\lambda
\\
=1+k^{-\nu}
\sum_{\substack{s,t\in G_k^{-1}(\bar{0}),
\\
t\neq s
}}
\langle Q_{s},Q_{t}\rangle_\lambda.
\end{multline*}

By Corollary \ref{corollary}.1) for $b=(s,t)$:
\[\langle Y^{(k)}_{\bar{0}},Y^{(k)}_{\bar{0}}\rangle_\lambda
=1+k^{-\nu}
\sum_{\substack{s,t\in G_k^{-1}(\bar{0}),
\\
t\neq s}}
\sum_{n=1}^{\infty} \lambda^n
\sum_{\substack{\gamma\in R^*,
\\
|\gamma|=n,
\\
\gamma\text{ connects }(s,t)}} \dfrac{1}{\gamma !}
\langle Q_s,Q_t,\Phi^\backprime_\gamma\rangle_0.\]
For $\gamma$ connecting $(s,t)$ the length $|\gamma|=n\geqslant 1$, so the summation over $n$ is taken from 1. Let us change the order of summation in this series:
\begin{multline*}
k^{-\nu}\sum_{n=1}^{\infty} \lambda^n\sum_{s\in G_k^{-1}(\bar{0})}
\sum_{\substack{\gamma\in R^*,
\\
|\gamma|=n,
\\
\gamma\text{ connects }(s)}}
\sum_{\substack{t\in G_k^{-1}(\bar{0}),
\\
\gamma\text{ connects }(s,t)
\\
t\neq s}}
\dfrac{1}{\gamma !}
\langle Q_s,Q_t,\Phi^\backprime_\gamma\rangle_0
\\
=\sum_{n=1}^{\infty}\sum_{s\in G_k^{-1}(\bar{0})}
\lambda^nk^{-\nu}W(n,s,k).
\end{multline*}

By \eqref{eq:W_estimate} we have:
\begin{multline*}
\Big|\sum_{s\in G_k^{-1}(\bar{0})}
\lambda^nk^{-\nu}W(n,s,k)\Big|\leqslant
\sum_{s\in G_k^{-1}(\bar{0})}
k^{-\nu}|\lambda|^n|W(n,s,k)|
\\
\leqslant
k^{\nu}k^{-\nu}2(C_3)^2n|\lambda C_2C_3|^n
\leqslant 2(C_3)^2\frac{n}{2^n}.
\end{multline*}

So the new series converges absolutely and uniformly for any $k$. Therefore the series for $\langle Y^{(k)}_{\bar{0}},Y^{(k)}_{\bar{0}}\rangle_\lambda$ converges absolutely and uniformly for any $k$. Hence 
\[\langle Y^{(k)}_{\bar{0}},Y^{(k)}_{\bar{0}}\rangle_\lambda=1+
\sum_{n=1}^{\infty}\lambda^n\sum_{s\in G_k^{-1}(\bar{0})}
k^{-\nu}W(n,s,k)\text{ and }\]
\[\lim_{k\rightarrow\infty}\langle Y^{(k)}_{\bar{0}},Y^{(k)}_{\bar{0}}\rangle_\lambda=1+\sum_{n=1}^{\infty}\lambda^n \lim_{k\rightarrow\infty}A_{n,k}, \text{ where }A_{n,k}=\sum_{s\in G_k^{-1}(\bar{0})}k^{-\nu}W(n,s,k).\]

It remains to show that for any $n\geqslant 1$:
\begin{equation}
\lim_{k\rightarrow\infty}A_{n,k}=V_n.
\label{eq:inner_cube}
\end{equation}
\[\text{Then }\lim_{k\rightarrow\infty}\langle Y^{(k)}_{\bar{0}},Y^{(k)}_{\bar{0}}\rangle_\lambda=1+\sum_{n=1}^{\infty}\lambda^n V_n>0\text{ by \eqref{eq:series_estimate}}.\]
\begin{center}
Proof of \eqref{eq:inner_cube}
\end{center}

Fix $n\geqslant 1$. For $k>2n$ consider a cube in $\mathbb{Z}^\nu$:
\[S_{n,k}=\{r=(r_1,\ldots,r_\nu)\in\mathbb{Z}^\nu\mid n\leqslant r_i\leqslant k-1-n \text{ for each }i=1,2,\ldots,\nu\}.\]

If $\gamma$ connects $(s,t)$, then $\rho(s,t)\leqslant|\gamma|$. So if $\gamma$ connects $(s,t)$, $|\gamma|=n$ and 
\medskip
\\
$s\in S_{n,k}$, then $t\in G_k^{-1}(\bar{0})$ and $t\in\tilde{\gamma}$. Hence for $s\in S_{n,k}$, $W(n,s,k)=V(n,s)=V_n$ by \eqref{eq:V_independent}. 

Denote 
\[R_{n,k}=\sum_{s\in G_k^{-1}(\bar{0})\setminus S_{n,k}}k^{-\nu}W(n,s,k).\]
\[\text{Then }A_{n,k}=\sum_{s\in S_{n,k}}k^{-\nu}W(n,s,k)+R_{n,k}=k^{-\nu}(k-2n)^\nu V_n+R_{n,k},\]
since $S_{n,k}$ contains $(k-2n)^\nu$ points.

Since $G_k^{-1}(\bar{0})\setminus S_{n,k}$ contains $k^\nu-(k-2n)^\nu$ points, by \eqref{eq:W_estimate} we have:
\[|R_{n,k}|\leqslant \sum_{s\in G_k^{-1}(\bar{0})\setminus S_{n,k}}k^{-\nu}|W(n,s,k)|
\leqslant \dfrac{k^\nu-(k-2n)^\nu}{k^\nu} 2(C_3)^2 n(C_2C_3)^n\rightarrow 0\]
as $k\rightarrow\infty$, since $\lim_{k\rightarrow\infty}
\dfrac{k^\nu-(k-2n)^\nu}{k^\nu}
=0$. So
\[\lim_{k\rightarrow\infty}A_{n,k}=V_n
\lim_{k\rightarrow\infty}\dfrac{(k-2n)^\nu}{k^\nu}+\lim_{k\rightarrow\infty}(R_{n,k})=V_n.\]
\end{proof}

\textit{Note:} each $V_n$ is a finite sum; explicit expressions for $V_n$ can be found, which allows to  approximate the limiting variance. In particular, it is easy to show that $V_1=2\nu$ and $V_2=2\nu(2\nu-1)$.

\subsection{Proof of the central limit theorem for Ising model}
\label{proof_main}

\begin{proof}[Proof of Main Theorem (Theorem \ref{theorem:main})]
Constant $C$ was defined in \\Theorem \ref{theorem:variance}. $C\leqslant C_\nu=\dfrac{1}{2C_2C_3}$, the constant from Lemma \ref{lemma:series_N}. 
Suppose $|\lambda|<C$. 

By the definition of renormalization-group, for any $\tau_1,\tau_2,\ldots,\tau_m\in\mathbb{Z}^\nu$:
\begin{multline} 
\langle Y^{(k)}_{\tau_1},Y^{(k)}_{\tau_2},\ldots,Y^{(k)}_{\tau_m}\rangle_\lambda 
\\
=(k^{-\alpha/2})^m  \sum_{t_1 \in G_k^{-1}(\tau_1)}\ldots\sum_{t_m \in G_k^{-1}(\tau_m)}
 \langle Q_{t_1},Q_{t_2},\ldots,Q_{t_m} \rangle_\lambda.
\label{RGm}
\end{multline}

For $m=1$ by Corollary \ref{corollary}.3):
$\langle Y^{(k)}_{\tau}\rangle_\lambda=k^{-\alpha/2}\sum_{t\in G_k^{-1}(\tau)}\langle Q_t\rangle_\lambda=0$. So we have:
\begin{equation}
\text{For any }\tau\in\mathbb{Z}^\nu:\langle Y^{(k)}_{\tau}\rangle_\lambda =0.
\label{eq:expectation}
\end{equation}

1. Suppose $\alpha>\nu$. 

Fix $\tau\in\mathbb{Z}^\nu$. 
In order to show that $Y^{(k)}_\tau \rightarrow 0$ in mean square, it is sufficient to prove that the expectation of  $Y^{(k)}_\tau$  equals 0 (proven in \eqref{eq:expectation}) and its variance tends to 0, that is
\begin{equation}
\lim_{k\rightarrow\infty}\langle Y^{(k)}_{\tau},Y^{(k)}_{\tau}\rangle_\lambda=0.
\label{eq:variance}
\end{equation}

Denote $\varepsilon=\alpha-\nu$.
Then $\varepsilon>0$ and $\alpha=\varepsilon+\nu$.
By \eqref{RGm},
\begin{multline*}
\langle Y^{(k)}_{\tau},Y^{(k)}_{\tau}\rangle_\lambda=k^{-\alpha}\sum_{s\in G_k^{-1}(\tau)}\sum_{t\in G_k^{-1}(\tau)}\langle Q_s,Q_t\rangle_\lambda
\\
=k^{-\varepsilon}k^{-\nu}\sum_{s\in G_k^{-1}(\tau)}\sum_{t\in G_k^{-1}(\tau)}\langle Q_s,Q_t\rangle_\lambda.
\end{multline*}

Theorem \ref{theorem:variance} implies that:
\[\lim_{k\rightarrow\infty}k^{-\nu}\sum_{s\in G_k^{-1}(\tau)}\sum_{t\in G_k^{-1}(\tau)}\langle Q_s,Q_t\rangle_\lambda=Const.\] 
Since $\lim_{k\rightarrow\infty}k^{-\varepsilon}=0$, we get \eqref{eq:variance}.

2. Suppose $\alpha=\nu$. 

First we prove the following formulas \eqref{eq:lim_over2} and \eqref{eq:lim_equal2}.
\begin{equation}
\text{For }m\geqslant 3 \text{ and any }\tau_1,\tau_2,\ldots,\tau_m\in\mathbb{Z}^\nu: 
\lim_{k\rightarrow\infty}\langle Y^{(k)}_{\tau_1},Y^{(k)}_{\tau_2},\ldots,Y^{(k)}_{\tau_m}\rangle_\lambda =0.
\label{eq:lim_over2}
\end{equation}
\begin{equation}
\text{For any }\tau,\theta\in\mathbb{Z}^\nu,\text{ such that }\tau\neq\theta:\lim_{k\rightarrow\infty}\langle Y^{(k)}_{\tau},Y^{(k)}_{\theta}\rangle_\lambda =0.
\label{eq:lim_equal2}
\end{equation}

Assume \eqref{eq:lim_over2} and \eqref{eq:lim_equal2} are proven. The formulas \eqref{eq:expectation},  \eqref{eq:lim_over2}, \eqref{eq:lim_equal2} and Theorem \ref{theorem:variance} determine the limits of all the semi-invariants $\langle Y^{(k)}_{\tau_1},Y^{(k)}_{\tau_2},\ldots,Y^{(k)}_{\tau_m}\rangle_\lambda$ as $k\rightarrow\infty$. All of the limiting semi-invariants equal 0, except the variances. Therefore the random variables $Y^{(k)}_{\tau_1},Y^{(k)}_{\tau_2},\ldots,Y^{(k)}_{\tau_m}$ converge in distribution as $k\rightarrow\infty$ to an independent multivariate normal random vector, due to Lemma \ref{lemma:Carleman}. The statement about the expectation and variance of the limiting distribution follows from \eqref{eq:expectation} and Theorem \ref{theorem:variance}. So it remains to prove \eqref{eq:lim_over2} and \eqref{eq:lim_equal2}.

\begin{center}
Proof of \eqref{eq:lim_over2}
\end{center}

Fix $m\geqslant 3$ and $\tau_1,\ldots,\tau_m\in\mathbb{Z}^\nu$. By \eqref{RGm} and Corollary \ref{corollary}.1) we have: 
\begin{multline*}
\langle Y^{(k)}_{\tau_1},\ldots,Y^{(k)}_{\tau_m}\rangle_\lambda
\\
=k^{-m\nu/2}
\sum_{t_1 \in G_k^{-1}(\tau_1)}
\ldots\sum_{t_m \in G_k^{-1}(\tau_m)}\sum_{n=0}^{\infty}\lambda^n
\sum_{\substack{\gamma\in R^*,
\\
|\gamma|=n,
\\
\gamma\text{ connects }b}} 
\dfrac{1}{\gamma !}\langle Q^,_b,\Phi^\backprime_\gamma\rangle_0.
\end{multline*}
Here each $b=(t_1,t_2,\ldots,t_m)$.
Let us change the order of summation and denote:
\begin{equation}
A_k=k^{-m\nu/2}\sum_{n=0}^{\infty}B_{n,k},
\text{ where} 
\label{eq:A_k}
\end{equation}
\[B_{n,k}=\lambda^n
\sum_{t_1 \in G_k^{-1}(\tau_1)}
\sum_{\substack{\gamma\in R^*,
\\
|\gamma|=n,
\\
\gamma\text{ connects }(t_1)}}  \dfrac{1}{\gamma !}
\sum_{t_2 \in G_k^{-1}(\tau_2)}
\ldots\sum_{t_m \in G_k^{-1}(\tau_m)}\Big|\langle Q^,_b,\Phi^\backprime_\gamma\rangle_0\Big|.\]
\medskip

It is sufficient to show that the series \eqref{eq:A_k} converges and $\lim_{k\rightarrow\infty}A_k=0$. Indeed, that implies that the original series for $\langle Y^{(k)}_{\tau_1},\ldots,Y^{(k)}_{\tau_m}\rangle_\lambda$ absolutely converges, the order of summation does not matter and \[\lim_{k\rightarrow\infty}\langle Y^{(k)}_{\tau_1},\ldots,Y^{(k)}_{\tau_m}\rangle_\lambda=0.\]

Since $t_1 \in G_k^{-1}(\tau_1)$, there are $k^\nu$ choices for $t_1$. By Lemma \ref{lemma:Eulirean}, there are at most $(C_2)^n$ families $\gamma$ of length $n$ that connect $(t_1)$.

Next we fix $t_1$ and a family $\gamma$ of length $n$ connecting $(t_1)$. Then there are at most $n+1$ elements in $\tilde{\gamma}$ and $t_i\in \tilde{\gamma}$ $(i=2,\ldots,m)$. So there are at most $(n+1)^{m-1}$ choices for $(t_2,\ldots,t_m)$.

By Lemma \ref{lemma:estimates}, for each $b$ and $\gamma$ with $|\gamma|=n$: 
$|\langle Q^,_b,\Phi^\backprime_\gamma\rangle_0|\leqslant 
(C_3)^{m+n}\;m!\;\gamma!$ 
\medskip
\\
So each $B_{n,k}\leqslant |\lambda|^nk^\nu(C_2)^n(n+1)^{m-1}
(C_3)^{m+n}\;m!$ and 
\begin{equation}
B_{n,k}\leqslant k^\nu
(C_3)^{m}\;m!(n+1)^{m-1}|\lambda C_2C_3|^n.
\label{eq:inequality_21}
\end{equation}

Since $|\lambda|<\dfrac{1}{2C_2C_3}$, then $|\lambda C_2C_3|<\dfrac{1}{2}$. So by \eqref{eq:inequality_21} and Lemma \ref{lemma:number_series}.2), the series $\sum_{n=0}^\infty B_{n,k}$ converges, the series \eqref{eq:A_k} converges and 
\begin{multline*}
A_k\leqslant k^{-m\nu/2}k^\nu
(C_3)^{m}\;m!\sum_{n=0}^\infty (n+1)^{m-1}|\lambda C_2C_3|^n
\\
\leqslant k^{\frac{(2-m)\nu}{2}}(C_3)^{m}\;m!\dfrac{m!}{(1-|\lambda C_2C_3|)^{m+1}}
=k^{\frac{(2-m)\nu}{2}}\dfrac{(C_3)^{m}(m!)^2}{(1-|\lambda C_2C_3|)^{m+1}}.
\end{multline*}

Since $C_2, C_3$ and $m\geqslant 3$ are fixed we have:
\[0\leqslant \lim_{k\rightarrow\infty}A_k\leqslant \dfrac{(C_3)^{m}(m!)^2}{(1-|\lambda C_2C_3|)^{m+1}}\lim_{k\rightarrow\infty}k^{\frac{(2-m)\nu}{2}}=0\text{ and }\lim_{k\rightarrow\infty}A_k=0.\]

\begin{center}
Proof of \eqref{eq:lim_equal2}
\end{center}

Fix $\tau,\theta\in\mathbb{Z}^\nu$, $\tau\neq\theta$. We consider four cases. 

\textit{Case 1:} the first coordinate of $\tau$ equals 0 and the first coordinate of $\theta$ is negative. 

Clearly, for any $t=(t_1,\ldots,t_\nu)\in G_k^{-1}(\tau)$ we have: $0\leqslant t_1\leqslant k-1$. Similarly, for any $s\in G_k^{-1}(\theta)$, $s_1\leqslant -1$.
We introduce cross-sections of the cube $G_k^{-1}(\tau)$:
\[D_l=\left\lbrace t=(t_1,\ldots,t_\nu)\in G_k^{-1}(\tau)\mid t_1=l\right\rbrace , l=0,1,\ldots,k-1.\]

Clearly, $G_k^{-1}(\tau)=\bigcup_{l=0}^{k-1}D_l$. Next we show:
\begin{equation}
\text{if }t\in D_l,s\in G_k^{-1}(\theta)\text{ and }\gamma \text{ connects }(t,s), \text{ then }|\gamma|\geqslant l+1.
\label{eq:cross-section}
\end{equation}

For $t\in D_l$ we have $t_1=l$. For $s\in G_k^{-1}(\theta)$ we have $s_1\leqslant -1$. So the distance between such $t$ and $s$ is not less than $l+1$. If a family $\gamma$ connects $(t,s)$, then the length of $\gamma$ is at least $l+1$. This proves \eqref{eq:cross-section}.

By \eqref{RGm}:
\[\langle Y^{(k)}_{\tau},Y^{(k)}_{\theta}\rangle_\lambda=k^{-\nu}\sum_{t\in G_k^{-1}(\tau)}\sum_{s \in G_k^{-1}(\theta)}
 \langle Q_{t},Q_{s}\rangle_\lambda=
 k^{-\nu}\sum_{l=0}^{k-1}
 \sum_{t\in D_l}\sum_{s \in G_k^{-1}(\theta)}
 \langle Q_{t},Q_{s}\rangle_\lambda
 \]
and by Corollary \ref{corollary}.1):
\[\langle Y^{(k)}_{\tau},Y^{(k)}_{\theta}\rangle_\lambda=k^{-\nu}\sum_{l=0}^{k-1}
 \sum_{t\in D_l}\sum_{s \in G_k^{-1}(\theta)}\sum_{n=0}^\infty \lambda^n\sum_{\substack{\gamma\in R^*,
\\
|\gamma|=n,
\\
\gamma\text{ connects }(t,s)}} \dfrac{1}{\gamma !}
\langle Q_{t},Q_{s},\Phi^\backprime_\gamma\rangle_0.\]
By \eqref{eq:cross-section}, the sum over $n$ can be taken from $l+1$ instead of 0:
\[\langle Y^{(k)}_{\tau},Y^{(k)}_{\theta}\rangle_\lambda=k^{-\nu}\sum_{l=0}^{k-1}
 \sum_{t\in D_l}\sum_{s \in G_k^{-1}(\theta)}\sum_{n=l+1}^\infty \sum_{\substack{\gamma\in R^*,
\\
|\gamma|=n,
\\
\gamma\text{ connects }(t,s)}}\dfrac{\lambda^n}{\gamma !}
\langle Q_{t},Q_{s},\Phi^\backprime_\gamma\rangle_0.\]

Let us change the order of summation and denote:
\begin{equation}
S_k=k^{-\nu}\sum_{l=0}^{k-1}
 \sum_{t\in D_l}\sum_{n=l+1}^\infty d_{n,k},\text{ where }
\label{eq:S_k}
\end{equation}
\[d_{n,k}=\sum_{\substack{\gamma\in R^*,
\\
|\gamma|=n,
\\
\gamma\text{ connects }(t)}}
\sum_{\substack{s \in G_k^{-1}(\theta),
\\
\gamma\text{ connects }(t,s)}}
\Big|\dfrac{\lambda^n}{\gamma!}\langle Q_{t},Q_{s},\Phi^\backprime_\gamma\rangle_0\Big|.\]

It is sufficient to show that the series \eqref{eq:S_k} converges and $\lim_{k\rightarrow\infty}S_k=0$. Indeed, that implies that the original series for $\langle Y^{(k)}_{\tau},Y^{(k)}_{\theta}\rangle_\lambda$ absolutely converges, the order of summation does not matter and $\lim_{k\rightarrow\infty}\langle Y^{(k)}_{\tau},Y^{(k)}_{\theta}\rangle_\lambda =0.$ 
\medskip

By Lemma \ref{lemma:estimates}, if $|\gamma|=n$, then $\Big|\langle Q_{t},Q_{s},\Phi^\backprime_\gamma\rangle_0\Big|\leqslant (C_3)^{n+2}2!\gamma!=2(C_3)^{n+2}\gamma!$
\medskip

Assume $\gamma\in R^*$ and $t\in D_l$ are fixed, $|\gamma|=n$ and $\gamma$ connects $(t)$. Then $|\tilde{\gamma}|\leqslant n+1$. In order for $\gamma$ to connect $(t,s)$, the point $s$ should be among the elements of $\tilde{\gamma}$ with negative first coordinates, and $\tilde{\gamma}$ should contain points with first coordinates $l,l-1,l-2,\ldots,0$, so at least $l+1$ points of $\tilde{\gamma}$ have non-negative first coordinates. Therefore there are at most $n+1-(l+1)=n-l$ choices for $s$. By Lemma \ref{lemma:Eulirean}, there are at most $(C_2)^n$ families $\gamma$ with $|\gamma|=n$ that connect $(t)$. So 
\[d_{n,k}\leqslant (C_2)^n(n-l)\dfrac{|\lambda|^n}{\gamma!}2(C_3)^{n+2}\gamma!=2C_3^2(n-l)|\lambda C_2C_3|^n.\]

Since $|\lambda|<\dfrac{1}{2C_2C_3}$, then $|\lambda C_2C_3|<\dfrac{1}{2}$. By Lemma \ref{lemma:number_series}.1) the series \eqref{eq:S_k} converges and since $D_l$ contains $k^{\nu-1}$ points, we have:
\begin{multline*}
0\leqslant S_k
\leqslant 
2(C_3)^2k^{-\nu}\sum_{l=0}^{k-1}k^{\nu-1}\sum_{n=l+1}^\infty(n-l)|\lambda C_2C_3|^n
\\
=\dfrac{2(C_3)^2}{k}\sum_{l=0}^{k-1}\dfrac{|\lambda C_2C_3|^{l+1}}{\left(1-|\lambda C_2C_3| \right)^2}
\leqslant \dfrac{2(C_3)^2}{k\left(1-|\lambda C_2C_3| \right)^2}\sum_{l=0}^{\infty}|\lambda C_2C_3|^{l+1}
\\
=\dfrac{2(C_3)^2}{k\left(1-|\lambda C_2C_3| \right)^2}\cdot\dfrac{|\lambda C_2C_3|}{1-|\lambda C_2C_3|}.
\end{multline*}
Therefore $\lim_{k\rightarrow\infty}S_k=0$.

\textit{Case 2:} the first coordinate of $\tau$ is greater than the first coordinate of $\theta$.

Denote $a=(\tau_{1},0,0,\ldots,0)$. Then $a\in \mathbb{Z}^\nu$. Define $g$ by: $g(t)= t-a$
 and denote $\tau'=g(\tau)$, $\theta'=g(\theta)$. 
 
Then by Corollary \ref{corollary}.5), $\langle Y^{(k)}_{\tau},Y^{(k)}_{\theta}\rangle_\lambda=\langle Y^{(k)}_{\tau'},Y^{(k)}_{\theta'}\rangle_\lambda$; $\tau'$ and $\theta'$ satisfy the conditions of Case 1. Thus, Case 2 is reduced to Case 1.

\textit{Case 3}: the first coordinate of $\tau$ is less than the first coordinate of $\theta$.

This is reduced to Case 2 by interchanging $\tau$ and $\theta$.

\textit{Case 4:} the general case.

Since $\tau\neq\theta$, they should differ in at least one coordinate, for example, in $j$-th coordinate. The proof is obtained by applying the proofs in Cases 1-3 to $j$-th coordinates instead of the first coordinates. This completes the proof of \eqref{eq:lim_equal2} and the proof of the theorem. 
\end{proof}

\section{Discussion}

In this paper we prove a generalization of the central limit theorem to a random field transformed by renormalization group, in Ising model with no external field and with a constant strength of interaction. We show that as $k\rightarrow\infty$ the resulting random fields $Y_s^{(k)}$ converge in distribution to an independent random field with Gaussian distribution. We find the limits of all semi-invariants of $Y_s^{(k)}$ as $k\rightarrow\infty$ and apply Carleman's theorem. In particular, we show that all the semi-invariants, except the variances, tend to 0. In Theorem \ref{theorem:variance} we find an explicit expression for the limiting variance. In order to find the limiting semi-invariants, we derive estimations of the semi-invariants of the original random field with respect to Gibbs measure. 

We modify the techniques of estimating semi-invariants in Ising model from \cite{Ma80} and \cite{MM91} and apply it to derive a useful expression for semi-invariants with respect to the limiting Gibbs measure in Corollary \ref{corollary}.1). We provide a more transparent proof under more general conditions for the inequality about the number of links in a set with a symmetric binary relation (Theorem \ref{theorem:estimation}). In this theorem and the lemmas about estimations of  semi-invariants, as well as in the main theorem, we derive explicit expressions for the estimation constants. 

A possible direction for future research is generalization of our theorem to other types of Ising model and other types of distribution of the original random field. 

\bibliographystyle{asl}
\bibliography{Ilias}

\end{document}